\newcommand{\sgn}{\mathrm{sgn}}
\newcommand{\triv}{\mathsf{1}}
\newcommand{\GL}{\mathrm{GL}}
\newcommand{\U}{\mathrm{U}}
\newcommand{\Hom}{\mathrm{Hom}}
\newcommand{\ol}{\overline}
\newcommand{\field}{\mathbb}
\newcommand{\liealgebra}{\mathfrak}
\newcommand{\C}{{\field C}}
\newcommand{\R}{{\field R}}
\newcommand{\lev}{{\mathrm{lev}}}
\newcommand{\p}{\liealgebra p}
\renewcommand{\b}{\liealgebra b}
\newcommand{\lam}{\lambda}
\newcommand{\gam}{\gamma}
\newcommand{\al}{\alpha}
\newcommand{\SO}{\mathrm{SO}}
\newcommand{\SL}{\mathrm{SL}}
\renewcommand{\H}{\mathrm{H}}
\newcommand{\lra}{\longrightarrow}
\newcommand{\bs}{\backslash}
\renewcommand{\-}{\! - \!}
\newcommand{\std}{\textrm{std}}
\newcommand{\irr}{\textrm{irr}}
\newtheorem{prop}{Proposition}[section]
\newtheorem{lemma}[prop]{Lemma}
\newtheorem{theorem}[prop]{Theorem}
\numberwithin{equation}{section}
\newtheorem{corollary}[prop]{Corollary}
\newtheorem{proposition}[prop]{Proposition}
\theoremstyle{definition}
\newtheorem{remark}[prop]{Remark}
\newtheorem{example}[prop]{Example}
\newtheorem{notation}[prop]{Notation}
\newcommand{\frg}{\mathfrak{g}}
\newcommand{\frh}{\mathfrak{h}}
\newcommand{\frk}{\mathfrak{k}}
\newcommand{\frl}{\mathfrak{l}}
\newcommand{\frn}{\mathfrak{n}}
\newcommand{\frp}{\mathfrak{p}}
\newcommand{\frs}{\mathfrak{s}}
\newcommand{\bbC}{\mathbb{C}}
\newcommand{\bbH}{\mathbb{H}}
\newcommand{\bbN}{\mathbb{N}}
\newcommand{\bbQ}{\mathbb{Q}}
\newcommand{\bbR}{\mathbb{R}}
\newcommand{\bbZ}{\mathbb{Z}}
\newcommand{\caC}{\mathcal{C}}
\newcommand{\caD}{\mathcal{D}}
\newcommand{\caF}{\mathcal{F}}
\newcommand{\caG}{\mathcal{G}}
\newcommand{\caH}{\mathcal{H}}
\newcommand{\caI}{\mathcal{I}}
\newcommand{\caM}{\mathcal{M}}
\newcommand{\caP}{\mathcal{P}}
\newcommand{\beq}{\begin{equation}}
\newcommand{\eeq}{\end{equation}}
\def \Ad  {\mathop{\hbox {Ad}}\nolimits}
\def\ad {\mathop{\hbox {ad}}\nolimits}
\def \det {\mathrm{det}}
\def \End {\mathop{\hbox {End}}\nolimits}
\def\Ind {\mathop{\hbox{Ind}}\nolimits}
\def\Res {\mathop{\hbox{Res}}\nolimits}
\def\Id {\mathop{\hbox{Id}}\nolimits}
\newcommand{\sph}{{\mathrm{sph}}}
\newcommand{\HC}{\caH\caC}
\newcommand{\HCS}{\caH\caC^\sph}
\newcommand{\catI}{\caI^\sph}
\renewcommand{\H}{\caH}
\renewcommand{\Re}{\mathrm{Re}}
\renewcommand{\p}{\mathrm{per}}
\renewcommand{\c}{\mathrm{con}}
\newcommand{\St}{\mathrm{St}}
\newcommand{\pos}[1]{{\mathrm{pos}({#1})}}
\newcommand{\PR}{\caP^\R}
\newcommand{\gPR}{\caP^{\R,g}}
\newcommand{\PH}{\caP^\bbH}
\newcommand{\gPH}{\caP^{\bbH,g}}
\newcommand{\pPR}{{}'\PR}
\newcommand{\pPH}{{}'\PH}
\newcommand{\X}{X}
\newcommand{\frgone}{\frg_{-1}}
\begin{document}
\title{Duality between $\GL(n,\R)$ and the degenerate affine Hecke algebra
for $\frg\frl(n)$}
\author{DAN CIUBOTARU and PETER E.~TRAPA}
\date{\today}
\address{Department of Mathematics, University of Utah, Salt Lake City, UT 84112-0090}
\email{ciubo@math.utah.edu}
\email{ptrapa@math.utah.edu}
\maketitle

{
\begin{abstract}
We define an exact functor $F_{n,k}$ from the category of Harish-Chandra
modules for $\GL(n,\R)$ to the category
of finite-dimensional representations for the degenerate
affine Hecke algebra for $\frg\frl(k)$.  Under certain natural
hypotheses, we
prove that the functor
maps standard modules to standard modules (or zero) and irreducibles
to irreducibles (or zero).
\end{abstract}
}

\section{introduction}
\label{s:intro}

In this paper, we define an exact functor $F_{n,k}$ from the category $\HC_n$ of Harish-Chandra
modules for $G_\R = \GL(n,\R)$ to the category $\H_k$
of finite-dimensional representations for the degenerate
affine Hecke algebra $\bbH_k$ for $\frg\frl(k)$.  When we take $k=n$ and
{restrict to an appropriate subcategory}, we
prove that the functor
maps standard modules to standard modules (or zero) and irreducibles
to irreducibles (or zero).   We deduce the latter statement from the former using
a geometric relationship between
unramified Langlands parameters for $\GL(n,\bbQ_p)$ 
and Langlands parameters for $\GL(n,\R)$ (or rather the Adams-Barbasch-Vogan
version of them).
Our functor may be viewed as a real version of
the one defined by Arakawa and Suzuki \cite{AS}, and the geometric statement may be 
viewed as a real version of \cite{zel} due (independently)
to Lusztig and Zelevinsky.

The functor is very simple to define.  Let $K_\R = O(n)$, a maximal compact
subgroup of $G_\R$, write $\frg = \frg\frl(n,\C)$ for the complexified Lie algebra,
and let $V$ denote the standard representation of $G_\R$.  Let $\sgn$ denote
the determinant representation of $K_\R$.
Given a Harish-Chandra module $X$ for $G_\R$, we define
\[
F_{n,k}(X) = \Hom_{K_\R}(\triv , (X \otimes \sgn) \otimes V^{\otimes k}).
\]
(The twist of $X$ by $\sgn$ is a convenient normalization and is not conceptually important.)
It is known that $\bbH_k$ acts on $Y \otimes V^{\otimes k}$
for any $\U(\frg)$-module $Y$ (e.g.~\cite[2.2]{AS}); see Section \ref{s:fnk} below.
In our setting, it is easy to see that this
action commutes with $K_\R$, and thus $F_{n,k}(X)$ becomes a
module for $\bbH_k$.  Obviously $F_{n,k}$ is exact and covariant.
Related functors appear in the work of Etingof, Freund, and Ma (\cite{EFM}, \cite{ma}),
and in that of {Oda \cite{oda}}.

We note that
(even if we take $k=n$) 
the functor $F_{n,k}$ does not behave well on the category of all Harish-Chandra
modules for $G_\R$.   This is not surprising.  After all, using the Borel-Casselman
equivalence \cite{borel} and the reduction of Lusztig \cite{lu:graded}, we may interpret $\H_n$
as the category $\catI_n$ of Iwahori-spherical representations of the $p$-adic
group $\GL(n,\bbQ_p)$.  The objects in this latter category are exactly
the subquotients of spherical principal series.  So it is natural to expect that
$F_{n,n}$ should only be well-behaved on some real analog of $\catI_n$, and this is indeed
the case.  We make this more precise in Section \ref{ssec:lev} where we introduce
a notion of level for $\HC_n$, and define a category $\HC_{n,\geq k}$ consisting
of modules of level at least $k$. {(For instance, every subquotient of a spherical
principal series for $G_\R$ is an object in $\HC_{n,\geq n}$; see Example \ref{ex:sph}.)  }
We prove that $F_{n,k}$ maps standard modules in $\HC_{n,\geq k}$
to standard modules in $\H_k$ (or zero).  When $k = n$, we further prove that $F_{n,n}$ maps 
irreducibles to irreducible (or zero).

\begin{theorem}
\label{t:intro}
Suppose $X$ is an irreducible Harish-Chandra module for $\GL(n,\R)$
whose level is at least $n$.  
Then $F_{n,n}(X)$
is irreducible or zero.  {Moreover, $F_{n,n}$ implements a bijection
between irreducible Harish Chandra modules of level exactly $n$
and irreducible $\bbH_n$-modules.} 
\end{theorem}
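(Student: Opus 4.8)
The plan is to bootstrap from the theorem on standard modules (established above) using exactness of $F_{n,n}$ and the geometric comparison of parameters, the real analogue of \cite{zel} of Lusztig and Zelevinsky, via a computation in Grothendieck groups. Fix an irreducible $X = L(\gamma)$ in $\HC_{n,\geq n}$, realized as the Langlands quotient of a standard module $M(\gamma)$. If $\gamma$ has level $> n$, one expects $F_{n,n}(X) = 0$ as an essentially immediate consequence of the definition of level in Section \ref{ssec:lev} — level exceeding $n$ should force the branching multiplicity $\Hom_{K_\R}(\triv,(X\otimes\sgn)\otimes V^{\otimes n})$ to vanish — and the same reasoning shows $F_{n,n}$ annihilates every standard module of level $> n$. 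So the content is the case where $\gamma$ has level exactly $n$. Applying $F_{n,n}$ to $0 \to N \to M(\gamma) \to L(\gamma) \to 0$, exactness shows $F_{n,n}(L(\gamma))$ is a quotient of $F_{n,n}(M(\gamma))$, which by the standard-module theorem is a standard $\bbH_n$-module or zero.

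Next I would pin down which standard module. Using the transparent effect of $F_{n,n}$ on central/infinitesimal characters (as recorded in the proof of the standard-module theorem) together with the geometric statement relating Langlands parameters for $\GL(n,\R)$, in the Adams--Barbasch--Vogan framework, to unramified Langlands parameters for $\GL(n,\bbQ_p)$, I would produce a bijection $\gamma \mapsto \gamma'$ from the set of parameters of level exactly $n$ for $\GL(n,\R)$ onto the set of parameters of standard $\bbH_n$-modules, with $F_{n,n}(M(\gamma)) \cong M(\gamma')$. Crucially, the same geometric input should also match the Kazhdan--Lusztig data on the two sides: the multiplicity of $L(\delta)$ in $M(\gamma)$ equals the multiplicity of $L(\delta')$ in $M(\gamma')$, and $\gamma \mapsto \gamma'$ is an isomorphism of the relevant partial orders.

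With these in hand I would run a Grothendieck-group induction. By the compatibility of the level with the block/order structure from Section \ref{ssec:lev}, for $\gamma$ of level exactly $n$ every composition factor $L(\delta)$ of $M(\gamma)$ again has level exactly $n$, and for an irreducible of level $> n$ the standard modules occurring in its expansion again have level $> n$ (hence are killed by $F_{n,n}$, giving $[F_{n,n}(L(\gamma))]=0$ and so $F_{n,n}(L(\gamma))=0$ by finite length of $\H_n$). For $\gamma$ of level exactly $n$, inducting up the partial order and assuming $[F_{n,n}(L(\delta))] = [L(\delta')]$ for $\delta < \gamma$, one expands $[F_{n,n}(M(\gamma))] = [M(\gamma')]$ into irreducibles on both sides and cancels using the Kazhdan--Lusztig matching to conclude $[F_{n,n}(L(\gamma))] = [L(\gamma')]$. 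Since $L(\gamma')$ is irreducible and $\H_n$ has finite length, this forces $F_{n,n}(L(\gamma)) \cong L(\gamma')$, proving the first assertion; and because $\gamma \mapsto \gamma'$ was arranged to be a bijection onto parameters of all standard — hence all irreducible — $\bbH_n$-modules, $F_{n,n}$ restricts to a bijection between irreducible Harish-Chandra modules of level exactly $n$ and irreducible $\bbH_n$-modules, which is the ``moreover'' clause.

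The main obstacle is the middle step. Identifying $F_{n,n}(M(\gamma))$ as the \emph{specific} standard module $M(\gamma')$ rather than merely ``some standard module or zero'' requires computing central characters on both sides and controlling the combinatorial data separating standard modules within a block; and the Kazhdan--Lusztig matching is not a formal consequence of functoriality — it is exactly the point at which the Adams--Barbasch--Vogan microlocal comparison between parameters for $\GL(n,\R)$ and for $\GL(n,\bbQ_p)$ must be brought to bear. Once that is established, the exactness bookkeeping, the induction, the level-$>n$ vanishing, and the ``a module with the class of an irreducible is that irreducible'' endgame are all routine.
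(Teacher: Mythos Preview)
Your proposal is correct and takes essentially the same approach as the paper: compute $F_{n,n}$ on standard modules (Theorem~\ref{t:stdcomp}), verify that the resulting parameter map $\Gamma_{n,n}$ coincides with the geometrically defined map $\Psi$ of \eqref{e:Psi} (Theorem~\ref{t:irr}), and then invoke the multiplicity-matching Corollary~\ref{c:geom} for the Grothendieck-group argument sketched at the end of Section~\ref{s:geom}. One minor imprecision---composition factors of a level-$n$ standard module have level $\geq n$, not necessarily exactly $n$ (Proposition~\ref{p:level})---does not affect your argument, since the higher-level factors map to zero and the matching of multiplicities handles the bookkeeping.
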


Here is a sketch of the proof of Theorem \ref{t:intro}.
Since the theorem is about a nice relationship between $\GL(n,\R)$
and $\GL(n,\bbQ_p)$ (in the guise of $\H_n$), 
the place to begin looking for its origins is on the level of Langlands parameters.
This is the setting of Section \ref{s:geom}.  The main
result there is Theorem \ref{t:geom},  
a suitably equivariant compactification of the space of unramified Langlands parameters
for $\GL(n,\bbQ_p)$ by spaces of ABV parameters.
As a consequence (Corollary \ref{c:geom}), we obtain
that various coefficients in the expression of irreducible
modules in terms of standard ones coincide in both $\HC_n$
and $\H_n$.
The conclusion is that if one could find an exact  functor which
matches the ``right'' standard modules in 
both cases, it would automatically match irreducibles too.  
In Section \ref{s:std}, we make the relevant computation of $F_{n,k}$
applied to
standard modules (Theorem \ref{t:stdcomp}), 
and in Section \ref{s:irr} we check that
the matching is the right one from the viewpoint of
the geometry of Section \ref{s:geom}.  The statement
about irreducibles mapping to irreducibles (or zero)
follows immediately (Corollary \ref{c:irrcomp}).

The functor $F_{n,k}$ has a number of other good properties which
we shall pursue in detail elsewhere. 
For instance, 
$F_{n,k}$ takes certain special unipotent derived functor modules to 
interesting unitary representations defined by Tadi\'c; see Example \ref{ex:basic}(2).  
More generally,  
it matches appropriately
defined Jantzen filtrations in the real and $p$-adic cases (a real
version of the results of Suzuki \cite{suzuki}).   
We will use this fact to study unitary representations, 
ultimately giving a functorial explanation of the
coincidence of the spherical unitary duals of $\GL(n,\R)$
and $\GL(n,\bbQ_p)$
(\cite{vogan:gln}, \cite{ta}, \cite{ta2}, \cite{ba}).

\section{Geometric relationship between the 
langlands classification\\
 for $\GL(n,\R)$ and $\bbH_n$}
\label{s:geom}

\subsection{The Langlands classification for $\GL(n,\R)$.}
\label{ssec:glclass}
We begin with the classification of irreducible objects in $\HC_n$
which does not involve the dual group.  In order to do so, we must
recall the relative discrete series of $\GL(1,\R)$ and $\GL(2,\R)$,
and accordingly we must discuss representations of the maximal
compact subgroups $O(1)$ and $O(2)$.  
As in the introduction, we continue to write $\sgn$ for the 
determinant representation of $O(n)$.  
Apart from $\triv$ and $\sgn$, the remaining irreducible representations of $O(2)$
are two-dimensional and parametrized by integers $n \geq 1$.
We let $V(n)$
denote the irreducible representation of $O(2)$ with $\SO(2)$
weights $\pm n$.  It is also convenient to let $V(0)$
denote the reducible representation $\triv \oplus \sgn$.  Then
we always have $V(k) \otimes V(l) \simeq V(|k-l|) \oplus V(k+l)$ for instance.

For $x \in \GL(1,\R) \simeq \bbR^\times$,
write $\sgn(x)$ for the sign of $x$.  Then any irreducible
representation of $\bbR^\times$ is a relative 
discrete series and is of the form
\begin{equation}
\label{e:gl1ds}
\delta(\varepsilon, \nu) := \varepsilon \otimes |\cdot|^\nu.
\end{equation}
for $\varepsilon \in \{\triv, \sgn\}$ and $\nu \in \bbC$.
Meanwhile, any relative discrete series for $\GL(2,\R)$ is of the form
\begin{equation}
\label{e:gl2ds}
\delta(l,\nu):=D_l \otimes |\det(\cdot)|^\nu,
\end{equation}
where $l \in \bbZ^{\geq 2}$, $\nu \in \bbC$, $\det$ is the determinant character, and $D_n$
is a discrete series representation of $\SL^\pm(2,\R)$
(the group of two-by-two real matrices with determinant
$\pm1$) with lowest $O(2)$-type $V(l)$.  In more detail,
$D_l$ is characterized by requiring its restriction to $O(2)$
decompose as the sum $V(l+2k)$ over $k\in \bbN$.

We now introduce a key parameter set $\pPR_n$.  Its elements consists
of pairs $(P_\R,\delta)$.  Here $P_\R$ is a block upper triangular
subgroup of $G_\R$ whose Levi factor is an (ordered) product 
\[
L_\R = \GL(n_1,\R) \times \cdots \times \GL(n_r,\R)
\]
with $n_i \in {1,2}$, and $\delta = \delta_1 \boxtimes \cdots \boxtimes \delta_r$
is a relative discrete series of $L_\R$.  Thus each $\delta_i$ is of the form
$\delta(\varepsilon_i,\nu_i)$ (as in \eqref{e:gl1ds}) if $n_i =1$, and
otherwise of the form $\delta_i=\delta(l_i,\nu_i)$ (as in \eqref{e:gl2ds}).
We impose the further condition that
\begin{equation}
\label{e:dom}
n_1^{\-1}\Re(\nu_1) \geq n_2^{-1}\Re(\nu_2) \geq \cdots \geq
n_r^{\-1}\Re(\nu_r).
\end{equation}
To each such pair $\gamma'=(P_\R,\delta)$ in $\pPR_n$, we may form the 
parabolically induced standard module
\begin{equation}
\label{e:realstd}
\std(\gamma') := \Ind_{P_\R}^{G_\R}(\delta);
\end{equation}
here it is understood that $\delta$ has been extended trivially to the nilradical of $P_\R$.
It is further understood that the induction is normalized as in \cite[Chapter VII]{knapp}.
The condition \eqref{e:dom} guarantees that $\std(\gamma')$ has a unique irreducible
quotient, which we denote $\irr(\gamma')$.  Alternatively, $\irr(\gamma')$ is characterized
as the constituent of $\std(\gamma')$ containing its (unique) lowest $K$-type.

The assignment $\gamma' \mapsto \irr(\gamma')$ is not quite injective.  To remedy this
we let $\PR_n$ denote the set of equivalence classes in $\pPR_n$ for the relation $(P_\R,\delta) \sim
(P'_\R,\delta')$ if the two differ by the obvious kind of rearrangement of factors.
Then $\std(\gamma')$ (and thus $\irr(\gamma')$) depend only on the equivalence class of $\gamma'$.
It thus makes sense to write $\std(\gamma)$ and $\irr(\gamma)$ for $\gamma \in \PR_n$.
When we want to emphasize that we are in the real case, we may write $\std_\R(\gamma)$
and $\irr_\R(\gamma)$ instead.

Here is the classical Langlands classification in this setting (\cite{la}, cf.~\cite[Section 2]{v:unit}).

\begin{theorem}\label{t:glclass}
With notation as above, the map
\begin{align}
\label{e:glclass} 
\PR_n &\longrightarrow \text{irreducible objects in $\HC_n$} \nonumber\\
\gamma &\longrightarrow \irr_\R(\gamma)
\end{align}
is bijective.
\end{theorem}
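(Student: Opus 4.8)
The plan is to reduce to the Langlands classification in its abstract form and then identify the pieces. First I would invoke the Langlands classification for real reductive groups (\cite{la}, cf.~\cite[Section 2]{v:unit}): every irreducible Harish-Chandra module for $G_\R$ is the unique irreducible quotient of a standard module $\Ind_{P_\R}^{G_\R}(\sigma)$, where $P_\R = M_\R A_\R N_\R$ is the Langlands decomposition of a parabolic, $\sigma = \sigma_{M} \otimes e^{\nu}$ with $\sigma_M$ a relative discrete series of $M_\R A_\R$ and $\Re(\nu)$ strictly dominant for $N_\R$; moreover, two such data give isomorphic irreducible quotients if and only if they are conjugate under $G_\R$, and distinct data give distinct (hence inequivalent) irreducible quotients. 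So the content of the theorem is purely a bookkeeping identification: that for $G_\R = \GL(n,\R)$ the abstract Langlands data are in bijection with $\PR_n$, and that under this identification $\irr(\gamma)$ as defined via lowest $K$-type agrees with the abstract Langlands quotient.

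The key steps, in order, are: (1) Recall that for $\GL(n,\R)$ every parabolic is, up to conjugacy, block upper triangular, and the derived group of a Levi $\GL(m,\R)$ has a relative discrete series only when $m \in \{1,2\}$ — this is the standard fact that $\GL(m,\R)$ has (relative) discrete series iff $m \le 2$, since it requires $\mathrm{rank}(M_\R) = \mathrm{rank}(K_\R \cap M_\R)$, forcing each block size $n_i \in \{1,2\}$. Hence every Langlands datum is conjugate to one with Levi $L_\R = \prod_i \GL(n_i,\R)$, $n_i \in \{1,2\}$, and a relative discrete series $\delta = \boxtimes_i \delta_i$ on it. (2) Match the relative discrete series: for $\GL(1,\R)$ these are exactly the characters $\delta(\varepsilon,\nu)$ of \eqref{e:gl1ds}, and for $\GL(2,\R)$ exactly the $\delta(l,\nu)$ of \eqref{e:gl2ds} with $l \ge 2$ — this is the classification of (relative) discrete series in rank one, together with the description of the discrete series of $\SL^{\pm}(2,\R)$ by their lowest $O(2)$-type $V(l)$. (3) Observe that writing $\nu = (\nu_1,\dots,\nu_r)$ with $\nu_i$ the exponent on the $\GL(n_i,\R)$-block, the abstract dominance condition ``$\Re(\nu)$ dominant for $N_\R$'' is exactly \eqref{e:dom} after normalizing by block size $n_i$ — i.e., the standard characterization in the $\GL$ case that dominance amounts to the averaged real parts being weakly decreasing. (4) Handle the $G_\R$-conjugacy on data: two data with the same multiset of blocks-and-discrete-series that satisfy \eqref{e:dom} are $G_\R$-conjugate exactly when they differ by a permutation of equal blocks preserving \eqref{e:dom}, and this is precisely the equivalence relation defining $\PR_n$ from $\pPR_n$; distinct elements of $\PR_n$ therefore give non-conjugate data, hence inequivalent irreducibles. (5) Finally, identify $\irr(\gamma)$ with the abstract Langlands quotient: \eqref{e:dom} guarantees a unique irreducible quotient of $\std(\gamma)$, which by the Langlands construction is the one containing the lowest $K$-type; conversely this lowest $K$-type is the one built from the lowest $O(n_i)$-types of the $\delta_i$ (namely $\triv$, $\sgn$, or $V(l_i)$) via Frobenius reciprocity, so the characterization via lowest $K$-type matches. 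Combining (1)--(5) gives that $\gamma \mapsto \irr_\R(\gamma)$ is a well-defined bijection.

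I expect the main obstacle to be Step (4): carefully verifying that $G_\R$-conjugacy of Langlands data corresponds \emph{exactly} to the rearrangement equivalence defining $\PR_n$, with no further coincidences. The subtlety is that when two adjacent blocks have equal normalized real parts $n_i^{-1}\Re(\nu_i) = n_{i+1}^{-1}\Re(\nu_{i+1})$ but are not isomorphic as discrete series, one must check that swapping them (which also preserves \eqref{e:dom}) genuinely yields a $G_\R$-conjugate datum — this is true because the relevant Weyl group element lies in the group and conjugates one parabolic to the other — while if the blocks have \emph{distinct} normalized real parts no nontrivial permutation preserves \eqref{e:dom}, so the datum is rigid. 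One must also rule out spurious equivalences coming from the two realizations $\delta(\varepsilon,\nu)$ versus $\delta(l,\nu)$ overlapping, which they do not since the former live on $\GL(1)$-blocks and the latter on $\GL(2)$-blocks. Everything else is a direct unwinding of rank-one representation theory and the standard form of the Langlands classification, so no genuinely new computation is required.
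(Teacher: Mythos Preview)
The paper does not prove Theorem~\ref{t:glclass} at all: it simply records it as the classical Langlands classification for $\GL(n,\R)$ and cites \cite{la} and \cite[Section~2]{v:unit}. Your proposal is therefore not being compared against a proof in the paper but against a bare citation, and as an outline of how to extract the $\GL(n,\R)$ statement from the abstract Langlands classification it is essentially correct and standard.

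One small point worth tightening: in Step~(3) you invoke the abstract classification with \emph{strict} dominance and tempered inducing data, whereas the paper's formulation \eqref{e:dom} uses \emph{weak} inequalities and inducts from relative discrete series on the smallest possible blocks. Passing between the two requires the (well-known) fact that every irreducible tempered representation of $\GL(m,\R)$ is fully induced from relative discrete series on a product of $\GL(1)$ and $\GL(2)$ factors, so that a Langlands datum ``tempered on $M$ with strictly positive $\nu$'' unfolds to ``discrete series on a finer Levi with weakly decreasing $n_i^{-1}\Re(\nu_i)$''. This is exactly what makes the equivalence relation in Step~(4) come out right when adjacent blocks have equal normalized real part. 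You allude to this in Step~(4) but it would be cleaner to flag the tempered-to-discrete-series translation explicitly as the bridge between the abstract statement and the parameter set $\PR_n$.
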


In the Grothendieck group of $\HC_n$ (where we denote the image
of an object $M$ by $[M]$), we may consider expressions of the form
\begin{equation}
\label{e:gldecomp}
{
[\irr_\R(\eta)] = \sum_{\gamma \in \PR_n} M_\R(\gamma, \eta) [\std_\R(\gamma)];
}
\end{equation}
here $M_\R(\gamma,\eta) \in \bbZ$. Each such expression is
finite.  More precisely,
fix a Cartan subalgebra $\frh$ of $\frg$, and fix $\lam \in \frh^*$.  According
to the Harish-Chandra isomorphism, $\lam$ defines an infinitesimal character
for $\frg$.  Let $\HC_n(\lam)$ denote the full subcategory of modules with this
infinitesimal character; there are only finitely many irreducible objects
in $\HC_n(\lam)$.   Using the classification of Theorem \ref{t:glclass}, let
$\PR_n(\lam)$ denote the parameters $\gamma$ such that $\irr_\R(\gam)$
is an object of $\HC_n(\lam)$.  Then if $\gamma \in \PR_n(\lam)$ and
$\eta \notin \PR_n(\lam)$, we have $M_\R(\gamma,\eta) = 0.$

In the next section we give 
a geometric interpretation of the numbers $M_\R(\gamma,\eta)$.

\subsection{Geometry of the Langlands classification for $\GL(n,\R)$.}
\label{ssec:glgeom}
One natural approach to computing the numbers $M_\R(\gamma,\eta)$ 
of \eqref{e:gldecomp} involves the Beilinson-Bernstein localization
functor from $\HC_n(\lam)$ to
$O(n,\C)$-equivariant $\lam$-twisted $\caD$-modules on the flag variety of $\frg$
\cite{v:ic3}.  
But there is no analogue of this kind of localization in the $p$-adic case.  
From the viewpoint of the local Langlands
conjecture, it is instead more natural to work with the geometry of the
reformulated space of Langlands parameters due to \cite{abv}.

Though not necessary, we find it convenient to
work with one infinitesimal character at a time.  
As above fix a Cartan subalgebra $\frh$ of $\frg$, and fix $\lam \in \frh^*$.
At this point we have two options.  We could identify
$\frh^*$ with $\frh$ (using the trace form, for instance), and view $\lam$
as an element of $\frh$.  Alternatively, we could canonically identify
$\frh^*$ with a Cartan subalgebra of the Lie algebra $\frg^\vee$ of the complex Langlands
dual group $G^\vee$.  Of course $\frg^\vee \simeq \frg\frl(n,\C)$, and so both options
are equivalent.   To keep notation to a minimum, we choose the first route, and henceforth
consider $\lam$ as a semisimple element of $\frg$.  But it is important to keep in mind
that the geometry we introduce below is naturally defined ``on the dual side'' (a fact which
is particularly important when considering generalizations outside of Type A).

Consider $\ad(\lam)$.  Let $\frg(\lam)$ denote the sum of its integral eigenspaces, 
let $\frn(\lam)$ denote the sum of its strictly positive integral eigenspaces, and let
$\frl(\lam)$ denote its zero eigenspace.  Set $\frp(\lam) = \frl(\lam) \oplus \frn(\lam)$.
Set $y(\lam) = \exp(\pi i\lam)$ and $e(\lam) = y(\lam)^2 = \exp(2\pi i \lam)$.   
Write $G(\lam)$ for the centralizer in $G$ of $e(\lam)$; clearly its Lie algebra
is $\frg(\lam)$.  Write $L(\lam)$ for the centralizer in $G$ of $\lam$; its Lie algebra
is $\frl(\lam)$.  Let $P(\lam)$ denote the analytic subgroup of $G$ with
Lie algebra $\frp(\lam)$.  Finally let $K(\lam)$ denote the centralizer in $G(\lam)$
of $y(\lam)$; it's Lie algebra $\frk(\lam)$ is the sum of the {\em even} integral eigenspaces
of $\ad(\lam)$.  Since $y(\lam)$ squares to $e(\lam)$, $K(\lam)$ is a symmetric
subgroup of $G(\lam)$.  {
For instance if $\lam=\rho$ corresponds to the trivial infinitesimal
character, then $G = G(\lam)$ and 
$K(\lam) \simeq \GL(\lceil{\frac n2}\rceil,\C) 
\times \GL(\lfloor{\frac n2}\rfloor,\C)$.

In practice, only the symmetric subgroup $K(\lam)$ will arise for us.  But to formulate
Theorem \ref{t:glgeomclass} we need others (in order to account for other ``blocks''
of representations for $G_\R$).  Let $\{y_0, \dots, y_r\}$ denote
representatives of $G$ conjugacy classes of semisimple elements which
square to $e(\lam)$.  Arrange the ordering so that $y_0 = y(\lam)$ above,
let $K_i(\lam)$ denote the centralizer in $G$ of $y_i$.  In the example of $\lam = \rho$
mentioned above, the collection $\{K_i(\lam)\}$ equals $\{\GL(p,\C) \times \GL(q,\C)
\; | \; p+q=n\}$.}

We need to introduce notation for intersection homology, and it is convenient
to do this in greater generality.
So suppose $H$ is a complex algebraic group acting with
finitely many orbits on a complex algebraic variety $X$.  Let $\phi$ be an
irreducible $H$-equivariant local system on $X$.  Then $\phi$ naturally parametrizes
both an irreducible $H$-equivariant constructible sheaf $\c(\phi)$ on $X$ and an irreducible
$H$-equivariant perverse sheaf $\p(\phi)$ on $X$ (\cite[1.4.1, 4.3.1]{bbd}).  {By taking
Euler characteristics, we} may
identify the integral Grothendieck group of the categories of $H$-equivariant perverse sheaves on $X$ and 
$H$-equivariant constructible
sheaves on $X$.  In this Grothendieck group, we may write
\begin{equation}
\label{e:gengeomdecomp}
[\c(\phi)] = (-1)^{l(\phi)}\sum_\psi M^g(\psi, \phi) [\p(\psi)];
\end{equation}
here the sum is over all $H$-equivariant local systems $\psi$, and $l(\phi)$ denotes the dimension
of the support of $\phi$.  (The superscript ``$g$'' is meant to stand for ``geometric''.)

{We return to our specific setting and let $\gPR_n(\lam)$ denote the disjoint union over
$i\in \{0,\dots, r\}$ of the set of irreducible $K_i(\lam)$ equivariant
local systems on $\X(\lam) = G(\lam)/P(\lam)$.    
In fact, the centralizer in each $K_i(\lam)$ 
of any point in $\X(\lam)$ always turns out to be connected,
and thus each such local system
is trivial.  As a matter of notation, if $Q$ is an orbit of $K_i(\lam)$
on $\X(\lam)$, then we will also write $Q$ for the corresponding trivial local system; in particular, we will
write things like $Q\in \gPR_n(\lam)$ and implicitly identify $\gPR_n(\lam)$ with $\bigcup_i K_i(\lam) \bs \X(\lam)$.  
Specializing \eqref{e:gengeomdecomp} to this setting (taking $H$ to be any of the groups $K_i(\lam)$ and $X= \X(\lam)$),} we thus obtain
integers $M_\R^g(Q,Q')$ for $K_i(\lam)$ orbits $Q$ and $Q'$ on $\X(\lam)$ defined via
\begin{equation}
\label{e:glgeomdecomp}
[\c(Q)] = (-1)^{\dim(Q)}\sum_{Q' \in \gPR_n(\lam)} M_\R^g(Q', Q) [\p(Q')];
\end{equation}
The following is the geometric formulation of the local Langlands correspondence is our setting
(\cite[Corollary 1.26(c), Corollary 15.13(b)]{abv}).

\begin{theorem}
\label{t:glgeomclass}  
Fix an infinitesimal character $\lam$ as above, recall the parameter space $\PR_n(\lam)$
of Section \ref{ssec:glclass}, and recall the integers $M_\R(\gamma,\eta)$ and $M_\R^g(Q,Q')$ of
\eqref{e:gldecomp} and \eqref{e:glgeomdecomp}.  Then there is a bijection
\[
d_\R \; : \; \PR_n(\lam) \longrightarrow \gPR_n(\lam)
\]
such that
\[
M_\R(\gamma,\eta) = \pm
M_\R^g(d_\R(\eta),d_\R(\gamma));
\]
here the sign is that of the parity of the difference in dimension of
(the support of)
$d_\R(\gamma)$ and $d_\R(\eta)$.
\end{theorem}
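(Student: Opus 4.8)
The plan is to recall how this follows from the work of Adams--Barbasch--Vogan: it is essentially \cite[Corollary 1.26(c)]{abv} (for the numerical identity), together with \cite[Corollary 15.13(b)]{abv} (for the existence of the bijection $d_\R$ and its compatibility with the two classifications). Since the packaging here differs slightly from theirs, I will indicate the structure of the argument rather than reproduce it.

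First I would bring in the geometric form of the Langlands classification. On the one hand, $\Irr\,\HC_n(\lam)$ is parametrized by $\PR_n(\lam)$ via Theorem \ref{t:glclass}. On the other hand, ABV attach to the same infinitesimal character the dual-side geometric parameter space $\X(\lam)$, whose complete geometric parameters --- here simply the $K_i(\lam)$-orbits, as $i$ ranges over $0,\dots,r$, since all the relevant isotropy groups are connected --- also parametrize $\Irr\,\HC_n(\lam)$. Comparing the two parametrizations produces the bijection $d_\R\colon \PR_n(\lam)\to\gPR_n(\lam)$; one then pins down normalizations so that under $d_\R$ the induced standard module $\std_\R(\gamma)$ of \eqref{e:realstd} matches the constructible sheaf $\c(d_\R(\gamma))$ of \eqref{e:glgeomdecomp}, and $\irr_\R(\gamma)$ matches the perverse sheaf $\p(d_\R(\gamma))$.

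Second I would invoke the duality of \cite{abv}. Beilinson--Bernstein localization \cite{v:ic3} realizes the numbers $M_\R(\gamma,\eta)$ of \eqref{e:gldecomp} as geometric multiplicities of the form \eqref{e:gengeomdecomp} on the flag variety of $\frg$, equivariant for $O(n,\C)$. The ABV duality interchanges $O(n,\C)$-equivariant perverse sheaves on that flag variety with $\bigcup_i K_i(\lam)$-equivariant perverse sheaves on $\X(\lam)$, carrying standard objects to standard objects and intersection-cohomology objects to intersection-cohomology objects, up to a cohomological shift. On Grothendieck groups this identifies the representation-theoretic multiplicity matrix with the transpose --- read off via $d_\R$ --- of the geometric matrix $M_\R^g$ on $\X(\lam)$, yielding $M_\R(\gamma,\eta)=\pm M_\R^g(d_\R(\eta),d_\R(\gamma))$. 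The sign is the parity of the shift, namely the parity of $\dim d_\R(\gamma)-\dim d_\R(\eta)$, which is consistent with the factor $(-1)^{\dim Q}$ in \eqref{e:glgeomdecomp}.

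I expect the real work --- and the main obstacle --- to be bookkeeping rather than new input: matching the concrete Langlands data $(P_\R,\delta)$ of Section \ref{ssec:glclass} with ABV's complete geometric parameters, and checking that the parabolically induced standard module \eqref{e:realstd} corresponds on the nose to the constructible sheaf $\c(Q)$ of \eqref{e:glgeomdecomp}. The genuinely delicate point is that nontrivial equivariant local systems do occur on the orbits appearing on the flag variety of $\frg$ --- they encode the signs $\varepsilon_i$ attached to the $\GL(1,\R)$ factors of $L_\R$ --- whereas on $\X(\lam)$ all isotropy is connected; under the duality these local systems are accounted for by replacing the single group $O(n,\C)$ on the flag-variety side by the family $\{K_i(\lam)\}$ on $\X(\lam)$. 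Getting this matching exactly right, and then the parity computation behind the sign, are the two steps that require care; the rest is a direct specialization of \cite{v:ic3} and \cite{abv} to $G_\R=\GL(n,\R)$.
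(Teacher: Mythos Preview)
Your proposal is correct and matches the paper's approach: the paper does not prove this theorem but simply cites \cite[Corollary 1.26(c), Corollary 15.13(b)]{abv}, exactly the two results you invoke. Your sketch of how the bijection and the multiplicity identity emerge from ABV's framework is accurate and in fact more detailed than what the paper provides.
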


\subsection{The Langlands classification for $\bbH_n$.}
\label{ssec:hclass}
The graded Hecke algebra $\bbH_n$ is
an associative algebra with unit defined as follows. 
Fix a Cartan subalgebra $\frh$ in $\frg$ and a system of
simple roots $\Pi(\frg,\frh)$ of $\frh$ in $\frg$.
The $\bbH_n$ contains as
subalgebras the group algebra $\C[W_n]$ of the Weyl group $W_n \simeq S_n$
and the symmetric
algebra $S(\frh)$, subject to the commutation relations
\begin{align}
s_\alpha\cdot \epsilon-s_\alpha(\epsilon)\cdot
s_\alpha=\langle\alpha,\epsilon\rangle,\text{ for all simple roots
}\alpha\in\Pi(\frg,\frh), \text{ and }\epsilon\in \frh.
\end{align}
We write $\H_n$ for the category of finite-dimensional
$\bbH_n$ modules.

We recall the classification of irreducible $\bbH_n$ modules.
To begin, we recall the one-dimensional 
Steinberg module $\St$ on which $\C[W_n]$ acts by the
sign representation and $S(\frh)$ acts
by the weight $-\rho$ corresponding to the choice $\Pi(\frg,\frh)$.
For $\nu \in \bbC$, let $\bbC_\nu$ denote the one dimensional
representation of $\bbH_n$ where $\C[W_n]$ acts trivially and
$S(\frh)$ acts by the weight of the center of $\frg$ corresponding
to $\nu$. 
Any relative discrete series representation of $\bbH_n$
is of the form $\St \otimes \bbC_\nu$.

We next introduce a parameter set $\pPH_n$ analogous to
$\pPR_n$ in Section \ref{ssec:glclass}.  As we implicitly  did there, 
we fix $\frh$ to be the diagonal Cartan subalgebra
and let $\Pi(\frg,\frh)$ correspond to the upper triangular Borel
subalgebra.  We let $\pPH_n$ consist of pairs
$(\bbH_P, \delta)$.  Here $\bbH_P$ is a subalgebra of $\bbH$
corresponding to a block upper triangular subalgebra of $\frg$
whose blocks we write as the ordered product
\[
\frl = \frg\frl(n_1) \oplus \cdots \oplus \frg\frl(n_r),
\]
and $\delta = \delta_1 \boxtimes \cdots \boxtimes \delta_r$
a relative discrete series of $\frl$.  Thus each $\delta_i$
is of the form $\St \otimes \bbC_{\nu_i}$.  We further require
that
\begin{equation}
\label{e:hdom}
\Re(\nu_1) \geq \cdots \geq \Re(\nu_r).
\end{equation}
To each such pair $\gamma' \in \pPH_n$, we form the induced module
\[
\std(\gamma') = \bbH_n \otimes_{\bbH_P} \delta.
\]
Because of \eqref{e:hdom}, $\std(\gamma')$ has a unique irreducible quotient
$\irr(\gamma')$.  

Now let $\PH_n$ denote equivalence classes in $\pPH_n$ for the relation of
rearranging factors (while still preserving \eqref{e:hdom} of course).   Then the
modules $\std(\gamma)$ and $\irr(\gamma)$ are well-defined on equivalence
classes $\gamma \in \PH_n$.  
When we want to emphasize that we are in the Hecke algebra
 case, we may write $\std_\bbH(\gamma)$
and $\irr_\bbH(\gamma)$ instead.

The Langlands classification in this setting is as follows (\cite{bz}, cf.~\cite{evens}).

\begin{theorem}
\label{t:hclass}
With notation as above, the map
\begin{align}
\label{e:hclass} 
\PH_n &\longrightarrow \text{irreducible objects in $\H_n$}\nonumber\\
\gamma &\longrightarrow \irr_\bbH(\gamma)
\end{align}
is bijective.
\end{theorem}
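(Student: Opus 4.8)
The plan is to run the standard Langlands-classification argument for the graded Hecke algebra, essentially following \cite{evens} (alternatively one can deduce the statement from the Bernstein--Zelevinsky classification \cite{bz} for $\GL(n,\bbQ_p)$ via the Borel--Casselman equivalence and Lusztig's reduction discussed in Section~\ref{s:intro}, which carry parabolic induction to parabolic induction and $\St\otimes\bbC_\nu$ to a generalized Steinberg; I outline the direct route). The first point to record is that each $\std(\gamma')$, $\gamma'\in\pPH_n$, really does have a unique irreducible quotient, so that $\irr_\bbH(\gamma')$ makes sense. Since $\St$ is one-dimensional, $\delta$ is one-dimensional and $\std(\gamma')=\bbH_n\otimes_{\bbH_P}\delta$ is finite-dimensional of dimension $|W_n/W_P|$; one then analyses its generalized $S(\frh)$-weights and checks that the extremal weights picked out by the dominance condition \eqref{e:hdom} occur with weight-multiplicity one and generate $\std(\gamma')$ over $\bbH_n$. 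Consequently every proper submodule avoids those weight lines, their sum is the unique maximal proper submodule, and $\irr_\bbH(\gamma')$ is well defined. This step uses only \eqref{e:hdom}.

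The substance is surjectivity of $\gamma\mapsto\irr_\bbH(\gamma)$. Given an irreducible finite-dimensional $\bbH_n$-module $\pi$, I would choose an $S(\frh)$-weight $\mu_0$ of $\pi$ with $\nu_0:=\Re(\mu_0)$ maximal in the dominance order on $\frh_\bbR^*$, and take its Langlands decomposition $\nu_0=\nu+z$ relative to a standard Levi $\frl=\frg\frl(m_1)\oplus\cdots\oplus\frg\frl(m_s)\subseteq\frg$, with $\nu\in\frz(\frl)^*$ strictly dominant for the roots outside $\frl$ and $z$ in the real root span of $\frl$. A Casselman-type estimate on the growth of $S(\frh)$-weights under the parabolic subalgebra $\bbH_\frl\subseteq\bbH_n$ then shows that the $\bbH_\frl$-submodule of $\pi$ generated by the $\mu_0$-weight space is, after untwisting by $\bbC_\nu$, tempered; let $\sigma$ be an irreducible tempered quotient of it. Frobenius reciprocity yields a nonzero map $\bbH_n\otimes_{\bbH_\frl}(\sigma\otimes\bbC_\nu)\twoheadrightarrow\pi$, so $\pi$ is a quotient of a standard module and hence $\pi\cong\irr(\frl,\sigma,\nu)$ by the previous paragraph. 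It remains to put $(\frl,\sigma,\nu)$ into the shape required by $\pPH_n$: the irreducible tempered modules of $\bbH(\frg\frl(m))$ are precisely the (automatically irreducible) parabolic inductions of $(\St\otimes\bbC_{it_1})\boxtimes\cdots\boxtimes(\St\otimes\bbC_{it_u})$ with the $t_l\in\bbR$ --- here one uses that in type $A$ the only relative discrete series is the one-dimensional Steinberg and that the relevant $R$-groups are trivial --- so transitivity of induction rewrites $\bbH_n\otimes_{\bbH_\frl}(\sigma\otimes\bbC_\nu)$ as $\std(\gamma')$ for some $\gamma'\in\pPH_n$ whose block exponents $\nu_i$ have weakly decreasing real part, the ties occurring only inside a former $\frl$-block and therefore absorbed by the equivalence relation defining $\PH_n$.

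For injectivity, if $\irr_\bbH(\gamma)\cong\irr_\bbH(\gamma')$ then the extremal weight isolated in the first paragraph recovers the leading exponent, hence the multiset of block central characters $\{\nu_i\}$ (a single $W_n$-orbit of weights in $\frh^*$), hence the infinitesimal character; with that datum fixed, \eqref{e:hdom} pins down the ordered sequence of blocks up to the permutation of blocks with equal $\Re(\nu_i)$, which is exactly the equivalence defining $\PH_n$, so $\gamma=\gamma'$. The step I expect to be the main obstacle is the surjectivity argument of the second paragraph: the Casselman-type weight estimate producing the Langlands datum $(\frl,\sigma,\nu)$, together with the classification of the tempered modules of $\bbH(\frg\frl(m))$, are the only genuinely substantive inputs, the rest being bookkeeping; in type $A$ the tempered classification is much easier than in general (one-dimensional discrete series, trivial $R$-groups), so the real crux is the Jacquet-module-style analysis of $S(\frh)$-weights.
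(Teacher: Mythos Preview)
The paper does not give its own proof of this theorem: it is stated as a known result with the citation ``(\cite{bz}, cf.~\cite{evens})'' and nothing further. Your sketch is a faithful outline of the Evens argument for graded Hecke algebras (unique quotient via extremal $S(\frh)$-weights under the dominance condition, surjectivity via a Casselman-type analysis producing the Langlands datum and Frobenius reciprocity, injectivity via recovery of the leading exponent), together with the type-$A$ simplifications you note (one-dimensional discrete series, trivial $R$-groups). You also correctly flag the alternative route through \cite{bz} and the Borel--Casselman/Lusztig reduction. Since the paper simply invokes these same references, there is nothing to compare: your proposal supplies exactly the argument the paper is pointing to.
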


In the Grothendieck group of $\H_n$, we may consider expressions of the form
\begin{equation}
\label{e:hdecomp}
{
[\irr_\bbH(\eta)] = \sum_{\gamma\in \PH_n} M_\bbH(\gamma,\eta) [\std_\bbH(\gamma)];
}
\end{equation}
here $M_\bbH(\gamma,\eta) \in \bbZ$. 
Each such expression is
once again finite.  More precisely, fix $\lam\in \frh^*$.  Since the center
of $\bbH_n$ is $S(\frh)^{W_n}$, $\lam$ defines a central character.
Let $\H_n(\lam)$ denote the full subcategory of modules with this
central character.   Using the classification of Theorem \ref{t:hclass}, let
$\PH_n(\lam)$ denote the parameters $\gamma$ such that $\irr_\bbH(\gam)$
is an object of $\H_n(\lam)$.  Then of course if $\gamma \in \PH_n(\lam)$ and
$\eta \notin \PH_n(\lam)$, we have $M_\bbH(\gamma,\eta) = 0.$

\subsection{Geometry of the Langlands classification for $\bbH_n$.}
\label{ssec:hgeom}
As in the previous section, fix $\lam \in \frh^*$.  With the same
caveats in place as in Section \ref{ssec:glgeom}, we 
identify $\frh^*$ with $\frh$ via the
trace form, and thus view $\lam$ as a semisimple element of $\frg$.

Again as in Section \ref{ssec:glgeom}, we let $L(\lam)$ analytic
subgroup of $G=\GL(n,\C)$ with Lie algebra equal to the zero eigenspace
of $\ad(\lam)$.  We let $\frgone(\lam)$ denote the $-1$ eigenspace.
Then  $L(\lam)$ acts (via the adjoint action) with finitely many orbits
on $\frgone(\lam)$.  These orbits may naturally be identified with orbits
of $G$ on the space of unramified Langlands parameters for
$\GL(n,\bbQ_p)$ (e.g.~\cite[Example 4.9]{vogan:llc}), and so they are to
be considered the $p$-adic analog of the real Langlands parameters
considered in Section \ref{ssec:glgeom}.

Again it transpires that the centralizer in $L(\lam)$
of any point of $\frgone(\lam)$ is connected, and thus we may identity
the set of orbits of $L(\lam)$ on $\frgone(\lam)$ with the set
$\gPH_n(\lam)$ of 
irreducible $L(\lam)$-equivariant local systems on $\frgone(\lam)$.
Specializing \eqref{e:gengeomdecomp} to this setting (taking $H=L(\lam)$
and $X = \frgone(\lam)$),
we obtain
integers $M_\bbH^g(Q,Q')$ defined via
\begin{equation}
\label{e:hgeomdecomp}
[\c(Q)] = (-1)^{\dim(Q)}\sum_{Q' \in \gPH_n(\lam)} M_\bbH^g(Q', Q) [\p(Q')];
\end{equation}
We have the following version of the local Langlands correspondence is 
this setting (\cite[10.5]{lu:cls1}, cf.~\cite[8.6.2]{cg}).

\begin{theorem}
\label{t:hgeomclass}  
Fix a central character $\lam$ as above, recall the parameter space $\PH_n(\lam)$
of Section \ref{ssec:glclass}, and recall the integers $M_\bbH(\gamma,\eta)$ and $M_\bbH^g(Q,Q')$ of
\eqref{e:hdecomp} and \eqref{e:hgeomdecomp}.  Then there is a bijection
\[
d_\bbH \; : \; \PH_n(\lam) \longrightarrow \gPH_n(\lam)
\]
such that
\[
M_\bbH(\gamma,\eta) = \pm M_\bbH^g(d_\bbH(\eta),d_\bbH(\gamma));
\]
here the sign is that of the parity of the difference in dimension of
(the support of)
$d_\bbH(\gamma)$ and $d_\bbH(\eta)$.
\end{theorem}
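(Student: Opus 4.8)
The statement is the Deligne--Langlands--Lusztig classification for the graded affine Hecke algebra, specialized to type $A$ with equal parameters, and the plan is to assemble it from the geometric realization of $\bbH_n$ in \cite{lu:graded}, \cite{lu:cls1} (and \cite{cg} in the case at hand). The strategy has three steps: first, describe the relevant geometry on $\frgone(\lam)$ together with the geometric standard and irreducible modules it produces; second, quote the geometric classification of $\H_n(\lam)$ in these terms, so that \eqref{e:hgeomdecomp} is literally the passage from geometric standards to geometric irreducibles; third, match this geometric classification with the Langlands classification of Section~\ref{ssec:hclass}, which is what produces the bijection $d_\bbH$.

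For the first two steps I would recall Lusztig's model. Writing $e(\lam)=\exp(2\pi i\lam)$ and $G(\lam)=Z_G(e(\lam))$, graded by the eigenvalues of $\ad(\lam)$, the group $L(\lam)=G(\lam)_0$ acts on $\frgone(\lam)$, and there is a proper Springer-type resolution $\pi:\wt{\frgone}(\lam)\to\frgone(\lam)$ (the incidence variety of pairs consisting of a point of $\frgone(\lam)$ and a grading-compatible Borel of $G(\lam)$ whose nilradical contains it). For $x_Q\in Q$ set $\std^g(Q):=H_*(\pi^{-1}(x_Q))$, and let $\irr^g(Q)$ be the multiplicity space $V_Q$ in the decomposition-theorem splitting $\caF:=\pi_*\ul{\bbC}\cong\bigoplus_Q\p(Q)\otimes V_Q$. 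By \cite{lu:cls1} (cf.\ \cite{cg}), each $\std^g(Q)$ carries a natural $\bbH_n$-module structure with $\irr^g(Q)$ as its unique irreducible quotient, the $\irr^g(Q)$ with $V_Q\ne0$ exhaust the irreducibles of $\H_n(\lam)$, and $[\std^g(Q)]$ expands in the $[\irr^g(Q')]$ with coefficients given by the stalks of $\p(Q')$ at $x_Q$, which by the very definition \eqref{e:hgeomdecomp} are (up to the sign $(-1)^{\dim Q}$ displayed there) the integers $M_\bbH^g(Q',Q)$. In type $A$ the map $\pi$ is semismall, every $L(\lam)$-orbit $Q$ on $\frgone(\lam)$ occurs in the splitting, and every component group encountered is trivial (as noted in Section~\ref{ssec:hgeom}), so the $Q$'s occurring are exactly the elements of $\gPH_n(\lam)$.

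For the last step I would define $d_\bbH(\gamma)$, for $\gamma=(\bbH_P,\delta)\in\PH_n(\lam)$ with $\delta=\boxtimes_i(\St\otimes\bbC_{\nu_i})$ on $\frl=\bigoplus_i\frg\frl(n_i)$, to be the $L(\lam)$-orbit on $\frgone(\lam)$ whose associated multisegment has $i$-th segment of length $n_i$ and centre $\nu_i$ (concretely, a regular nilpotent of the $i$-th block, positioned according to $\nu_i$). Then I would check: (i) $d_\bbH$ is a bijection onto $\gPH_n(\lam)$, both sides being canonically the set of multisegments with the multiset of supports fixed by $\lam$ (\cite{bz}); (ii) $\std_\bbH(\gamma)\cong\std^g(d_\bbH(\gamma))$ as $\bbH_n$-modules, by the standard compatibility of the algebraic parabolic induction $\bbH_n\otimes_{\bbH_P}(-)$ with geometric induction along the diagram relating $\wt{\frgone}(\lam)$ for $G(\lam)$ and for its Levi, once one knows the base case that the discrete series $\St$ of a $\frg\frl(n_i)$-block corresponds to a regular nilpotent; (iii) hence $\irr_\bbH(\gamma)\cong\irr^g(d_\bbH(\gamma))$, as these are the unique irreducible quotients of the standard modules identified in (ii), the condition \eqref{e:hdom} being exactly what guarantees such a quotient exists on the algebraic side. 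Transporting \eqref{e:hgeomdecomp} through $d_\bbH$ and comparing with the sign-free \eqref{e:hdecomp} then yields $M_\bbH(\gamma,\eta)=\pm M_\bbH^g(d_\bbH(\eta),d_\bbH(\gamma))$ with the sign $(-1)^{\dim d_\bbH(\gamma)-\dim d_\bbH(\eta)}$, as claimed.

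The genuine work is step (ii) above, the identification of the two families of standard modules; everything else is either a direct appeal to \cite{lu:cls1}/\cite{cg} or bookkeeping with dimensions and signs. The type-$A$ inputs invoked along the way --- semismallness of $\pi$, occurrence of all orbits, triviality of the component groups --- are classical facts about $\GL_n$, and it is precisely in securing their analogues that such an argument would run into trouble outside type~$A$.
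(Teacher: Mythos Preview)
The paper does not give its own proof of this theorem: it is stated with the citation ``(\cite[10.5]{lu:cls1}, cf.~\cite[8.6.2]{cg})'' and then used as a black box. Your proposal is a faithful sketch of the argument contained in those references, and in that sense it is correct and consonant with the paper's approach (which is simply to quote the result). Your description of $d_\bbH$ as the identity on multisegments is exactly what the paper later spells out in Section~\ref{s:irr}, and your identification of step~(ii) --- matching algebraically induced standards with Springer-fiber standards --- as the substantive point is accurate; this is precisely what \cite[8.6.2]{cg} (and the Zelevinsky/Lusztig theory behind it) provides in type~$A$.
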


\subsection{Geometric relationship between Langlands parameters for $\HC_n(\lam)$ and
$\H_n(\lam)$}
\label{ssec:geomrel}
Fix a semisimple element $\lam$ of the diagonal Cartan subalgebra $\frh$.
Consider the injective map
\[
\Phi \; : \; \frgone(\lam) \longrightarrow \X(\lam) = G(\lam)/P(\lam)
\]
defined by 
\begin{equation}
\label{e:Phi}
\Phi(N) = (1 + N)\cdot \frp(\lam).
\end{equation}
Here, as usual, we view $G(\lam)/P(\lam)$ as the variety of conjugates of $\frp(\lam)$.
Then $\Phi$ is obviously equivariant for the action of $L(\lam)$.
Since $L(\lam)$ is a subgroup of $K(\lam)$, if $Q$ is an $L(\lam)$ orbit on $\frgone(\lam)$,
then $\Psi(Q) = K(\lam)\cdot\Phi(Q)$ is a single $K(\lam)$ orbit.  We thus obtain an injection
\begin{equation}
\label{e:Psig}
\Psi^g \; : \; \gPH_n(\lam) \hookrightarrow \gPR_n(\lam).
\end{equation}
{
The image thus does not contain any parameters corresponding to the other symmetric subgroups $K_i(\lam),
i\geq 1$, introduced above.}

The main result of this section is the following.  It may be interpreted as a relation between
the intersection homology of the closures of spaces of real and $p$-adic Langlands parameters
for $\GL(n)$.

\begin{theorem}
\label{t:geom}
In the notation of \eqref{e:glgeomdecomp}  and \eqref{e:hgeomdecomp}, 
\[
M_\R^g(Q,Q') = M_\bbH^g\left(\Psi(Q), \Psi(Q') \right ).
\]
\end{theorem}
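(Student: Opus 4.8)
The plan is to reduce the comparison of the two intersection-homology decompositions \eqref{e:glgeomdecomp} and \eqref{e:hgeomdecomp} to a purely local statement about the geometry of the embedding $\Phi$ near the relevant orbit closures. The key point is that the map $\Psi^g$ of \eqref{e:Psig} is induced, on the level of varieties, by the $K(\lam)$-equivariant map obtained by pushing $\Phi$ around by $K(\lam)$; so I would first establish that $\Phi$ identifies $\frgone(\lam)$ with a locally closed, $L(\lam)$-stable subvariety $\X^\circ(\lam)$ of $\X(\lam)$, and that the orbit stratification of $\frgone(\lam)$ by $L(\lam)$ matches the stratification of $\X^\circ(\lam)$. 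Then I would argue that $\X^\circ(\lam)$ is in fact a \emph{union of $K(\lam)$-orbits} — more precisely, that for each $L(\lam)$-orbit $Q$ on $\frgone(\lam)$ the set $\Psi(Q)$ is a single $K(\lam)$-orbit meeting $\X^\circ(\lam)$ exactly in $\Phi(Q)$, and that $\Psi$ gives an order-isomorphism of the closure orders. This requires understanding the $K(\lam)$-orbit structure on $G(\lam)/P(\lam)$; in type $A$ one has an explicit combinatorial model (clans, or equivalently signed/unsigned arrangements) and the orbits hit by the image of $\Phi$ are exactly the ones with no $\pm$-pairs, so this should be checkable directly.

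The heart of the argument is a ``slice'' or ``local structure'' statement: for any point $x = \Phi(N) \in \X^\circ(\lam)$, the pair consisting of $\X(\lam)$ with its $K(\lam)$-stratification, localized near $x$, is (smoothly-locally, or after passing to a normal slice) isomorphic to the pair consisting of $\frgone(\lam)$ with its $L(\lam)$-stratification, localized near $N$. The mechanism is that the $K(\lam)$-orbit through $x$ is swept out transversally to $\X^\circ(\lam)$: one checks that the tangent space $T_x\,\X(\lam)$ decomposes $K(\lam)_x$-equivariantly as $T_x(K(\lam)\cdot x) \oplus T_x\X^\circ(\lam)$, i.e.\ that $\X^\circ(\lam)$ is a normal slice to the $K(\lam)$-action at $x$ and that $L(\lam) = K(\lam)_x \cdot L(\lam)$ already exhausts the isotropy directions, with $K(\lam)\cdot \X^\circ(\lam)$ open in $\X(\lam)$. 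Granting this, standard functoriality of intersection cohomology and of constructible sheaves under a smooth map (specifically: $\p(\Psi(Q'))$ restricts to $\p(Q')[\text{shift}]$ on $\X^\circ(\lam)$, and likewise $\c(\Psi(Q))$ restricts to $\c(Q)[\text{shift}]$, with the shifts governed by the codimension of $\X^\circ(\lam)$, which is constant) transports the decomposition \eqref{e:glgeomdecomp} restricted to $\X^\circ(\lam)$ termwise onto \eqref{e:hgeomdecomp}. Matching the signs: $\dim Q$ and $\dim \Psi(Q)$ differ by the (fixed) codimension of $\X^\circ(\lam)$ in $\X(\lam)$, so the global sign $(-1)^{\dim Q}$ versus $(-1)^{\dim\Psi(Q)}$ differs only by an overall constant that cancels from both sides of the equation, and the relative signs inside the sum are preserved because the dimension differences $\dim Q' - \dim Q$ are preserved by $\Psi$.

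Concretely the steps are: (1) verify $\Phi$ is a locally closed immersion with image $\X^\circ(\lam)$, $L(\lam)$-equivariantly, by a direct computation with the parabolic $P(\lam)$ and the fact that $1+N$ for $N \in \frgone(\lam)$ is a well-defined coset representative transverse to $P(\lam)$; (2) identify the $K(\lam)$-orbits meeting $\X^\circ(\lam)$ and show each meets it in a single $L(\lam)$-orbit, using the type-$A$ orbit combinatorics and the explicit form of $\Phi$, thereby pinning down $\Psi$ and showing it is a closure-order embedding; (3) prove the normal-slice statement — that $\X^\circ(\lam)$ is transverse to every $K(\lam)$-orbit it meets and $K(\lam)\cdot\X^\circ(\lam)$ is open — which is where the real content lies; (4) invoke the behaviour of $\p(-)$, $\c(-)$ and the decomposition \eqref{e:gengeomdecomp} under restriction to a transverse slice to conclude $M_\R^g(Q,Q') = M_\bbH^g(\Psi(Q),\Psi(Q'))$ with the correct signs. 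I expect step (3) to be the main obstacle: one must check that the sweeping of $\X^\circ(\lam)$ by $K(\lam)/L(\lam)$ really is transverse and covers an open set — equivalently, that $\frgone(\lam) \oplus \frk(\lam)/\frl(\lam)$ matches $T\X(\lam)$ along $\X^\circ(\lam)$ — and then to be careful that intersection cohomology genuinely restricts as claimed (this is the standard fact that $\mathrm{IC}$ of a slice computes $\mathrm{IC}$ of the ambient variety, but one needs the slice to be transverse and the stratification to be the induced one, both of which (3) supplies). An alternative to steps (3)–(4), if the transversality is delicate, is to compare both sides directly to the Kazhdan–Lusztig-type combinatorics: Theorems \ref{t:glgeomclass} and \ref{t:hgeomclass} identify $M_\R^g$ and $M_\bbH^g$ with representation-theoretic multiplicities, and one could instead match those via the known description of both sides in terms of the same Hecke-algebra computation on the relevant Weyl group, but the geometric route is cleaner and is what I would pursue first.
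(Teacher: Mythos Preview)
Your overall strategy matches the paper's: both reduce to a product/slice statement that transports the IC decomposition from $\X(\lam)$ to $\frgone(\lam)$, after which step~(4) is routine. However, your step~(3) as formulated will not go through. The direct-sum decomposition $T_x\X(\lam) = T_x(K(\lam)\cdot x) \oplus T_x\X^\circ(\lam)$ fails at every $x=\Phi(N)$ with $N\neq 0$: since $L(\lam)\subset K(\lam)$ acts on $\frgone(\lam)$, the (positive-dimensional) $L(\lam)$-orbit direction through $x$ sits in \emph{both} summands. Even the weaker spanning statement is false in general: at the base point $\Phi(0)$ the left side is $\ol\frn(\lam)$ while the right side is $(\frk(\lam)\cap\ol\frn(\lam))\oplus\frgone(\lam)$, which misses every odd $\ad(\lam)$-eigenspace of eigenvalue $\leq -3$. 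So $\X^\circ(\lam)$ is not a normal slice, and $K(\lam)\cdot\X^\circ(\lam)$ is typically not open in $\X(\lam)$.

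The paper repairs this by sweeping $\Phi(\frgone(\lam))$ not by all of $K(\lam)$ but only by the unipotent piece $K(\lam)\cap\ol N(\lam)$, which acts \emph{freely} on the image of $\Phi$ (Lemma~\ref{l:nbar}); this yields an honest product $Y_\circ \simeq A\times\frgone(\lam)$ with $A=\frk(\lam)\cap\ol\frn(\lam)$ an affine space. One then shows $Y_\circ$ is open and dense in the union $Y$ of $K(\lam)$-orbits meeting the closure of $\Phi(\frgone(\lam))$, and stratum-by-stratum that $A\times Q$ is dense in $\Psi(Q)$. The density step (Lemma~\ref{l:p}) is exactly where the content lies, and it is not a tangent-space computation: it invokes the Lusztig--Zelevinsky theorem that $L(\lam)\cdot\Phi(N)$ is already dense in $P(\lam)\cdot\Phi(N)$. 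Once this is in place your step~(4) goes through verbatim, since IC and constructible sheaves restrict correctly along the dense open $Y_\circ\hookrightarrow Y$ and the affine factor $A$ contributes only a uniform shift. In short, the architecture is right but the local model is wrong: replace ``$\X^\circ(\lam)$ is a normal slice'' by ``$A\times\frgone(\lam)$ is a dense open in $Y$ compatibly with the stratifications'', and supply the Lusztig--Zelevinsky input for the density.
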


\noindent We begin with a simple lemma.

\begin{lemma}  
\label{l:nbar}
Let $\ol{N}(\lam)$ denote the analytic subgroup of $G(\lam)$ with
Lie algebra $\ol{\frn}(\lam)$ consisting of the strictly negative integral eigenvalues of $\ad(\lam)$. 
Then the unipotent group $K(\lam) \cap \ol{N}(\lam)$ acts freely on the image of $\Phi$.  In particular,
given an $L(\lam)$ orbit $Q$ on $\frgone(\lam)$, we have
\[
\dim\left (\Psi(Q) \right ) = \dim\left (Q \right ) + \dim\left (K(\lam) \cap \ol{N}(\lam)Ê\right).
\]
\end{lemma}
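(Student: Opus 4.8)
The plan is to carry out the whole argument inside the open cell $\ol N(\lam)\cdot\frp(\lam)$ of $\X(\lam)=G(\lam)/P(\lam)$, where the picture is essentially linear, and then to read off $\dim\Psi(Q)$ from a tangent-space computation at a point of $\Phi(Q)$. Write $\frg_c$ for the $c$-eigenspace of $\ad(\lam)$ and $\theta=\Ad(y(\lam))$, so $K(\lam)=G(\lam)^\theta$. Since $y(\lam)=\exp(\pi i\lam)$, $\theta$ acts on $\frg_c$ by the scalar $(-1)^c$ for integral $c$; hence the $\theta$-fixed part of $\ol{\frn}(\lam)$ is $\ol{\fru}:=\bigoplus_{c<0\text{ even}}\frg_c$, and the group in the lemma is $K(\lam)\cap\ol N(\lam)=\exp(\ol{\fru})$, a connected unipotent group. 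Set also $\fru:=\bigoplus_{c>0\text{ even}}\frg_c$ and $U=\exp(\fru)$; transposition of matrices identifies $\frg_c$ with $\frg_{-c}$, so $\dim\fru=\dim(K(\lam)\cap\ol N(\lam))$. Moreover $\frl(\lam)=\frg_0\subseteq\frk(\lam)$ and $\frk(\lam)=\ol{\fru}\oplus\frl(\lam)\oplus\fru$ is the triangular decomposition attached to the parabolic subalgebra $\frk(\lam)\cap\frp(\lam)=\frl(\lam)\oplus\fru$ of the reductive Lie algebra $\frk(\lam)$, with $\ol{\fru}$ the nilradical of the opposite parabolic.

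Next I would check that $\Phi$ lands in the open cell. If $N\in\frgone(\lam)$ then $N$ carries each $\lam$-eigenspace of $\C^n$ into the next lower one, hence is nilpotent; so $1+N$ is unipotent and $\log(1+N)=N-\tfrac12N^2+\cdots$ lies in $\ol{\frn}(\lam)$, giving $1+N\in\ol N(\lam)$ and $\Phi(N)=(1+N)\cdot\frp(\lam)$ in the cell. Since $\ol N(\lam)\cap P(\lam)=\{1\}$, the orbit map $\ol n\mapsto\ol n\cdot\frp(\lam)$ is an isomorphism from $\ol N(\lam)$ onto the cell, under which $\Phi$ becomes $N\mapsto1+N$; thus $\Phi$ is a locally closed embedding and $\dim\Phi(Q)=\dim Q$. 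As $K(\lam)\cap\ol N(\lam)\subseteq\ol N(\lam)$, this group preserves the cell and acts there by left translation in $\ol N(\lam)$, which is free; that proves the first assertion, every point of the image of $\Phi$ having trivial stabilizer in $K(\lam)\cap\ol N(\lam)$.

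For the dimension formula, fix $N_0\in Q$, set $x_0=\Phi(N_0)=g_0\cdot\frp(\lam)$ with $g_0=1+N_0$, and identify $T_{x_0}\X(\lam)$ with $\frg(\lam)/\Ad(g_0)\frp(\lam)$ via the $G(\lam)$-action, so $Z\in\frg(\lam)$ has image the tangent vector $Z\cdot x_0$; then $T_{x_0}\Psi(Q)=\frk(\lam)\cdot x_0$ and $T_{x_0}\Phi(Q)=\frl(\lam)\cdot x_0$, the latter of dimension $\dim Q$. \emph{Lower bound.} The map $(K(\lam)\cap\ol N(\lam))\times\Phi(Q)\to\X(\lam)$, $(u,x)\mapsto u\cdot x$, is injective: in the cell coordinates an equality $u(1+N)=u'(1+N')$ forces $N=N'$ by comparing $\ad(\lam)$-degree $-1$ parts, and then $u=u'$ by cancelling $1+N$. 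An injective morphism of varieties has image of the same dimension as its source, so $(K(\lam)\cap\ol N(\lam))\cdot\Phi(Q)\subseteq\Psi(Q)$ has dimension $\dim Q+\dim(K(\lam)\cap\ol N(\lam))$, whence $\dim\Psi(Q)\ge\dim Q+\dim(K(\lam)\cap\ol N(\lam))$. \emph{Upper bound.} Since $\frk(\lam)=\ol{\fru}\oplus\frl(\lam)\oplus\fru$, it is enough to prove $\fru\cdot x_0\subseteq\ol{\fru}\cdot x_0+\frl(\lam)\cdot x_0$; granting this, $T_{x_0}\Psi(Q)=\ol{\fru}\cdot x_0+\frl(\lam)\cdot x_0$ has dimension at most $\dim(K(\lam)\cap\ol N(\lam))+\dim Q$ (using the free action for the first summand), so $\dim\Psi(Q)\le\dim Q+\dim(K(\lam)\cap\ol N(\lam))$ and the two bounds give the asserted equality. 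After transporting along $g_0$ and projecting $\frg(\lam)\to\ol{\frn}(\lam)$ along $\frp(\lam)$ (call this $\pi$), the inclusion reads $\pi(\Ad(g_0^{-1})\fru)\subseteq\Ad(g_0^{-1})\ol{\fru}+\pi(\Ad(g_0^{-1})\frl(\lam))$, which I would prove by downward induction on $\ad(\lam)$-degree, using that $\Ad(g_0^{-1})$ strictly lowers degree, is the identity on the associated graded, and has leading correction $\ad(N_0)$ --- so that the top negative-degree part of $\Ad(g_0^{-1})D$, for $D\in\frl(\lam)$, is $[D,N_0]$, the infinitesimal $\frl(\lam)$-action on $N_0$.

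The inclusion $\fru\cdot x_0\subseteq\ol{\fru}\cdot x_0+\frl(\lam)\cdot x_0$ is the only step that is not pure bookkeeping, and it is where I expect the real difficulty: one has to show that the unipotent radical $U$ of $K(\lam)\cap P(\lam)$ contributes no tangent direction to the $K(\lam)$-orbit $\Psi(Q)$ beyond those already coming from $K(\lam)\cap\ol N(\lam)$ and from $\Phi(Q)$. (One can instead try to deduce the upper bound from density of $(K(\lam)\cap\ol N(\lam))\cdot\Phi(Q)$ in $\Psi(Q)$ via the big Bruhat cell of $K(\lam)$, but this meets the same obstruction.)
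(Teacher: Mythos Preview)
Your argument for the free action is correct and essentially the same as the paper's: the paper shows directly that $\frk(\lam)\cap\ol\frn(\lam)\cap\Ad(1+N)\frp(\lam)=0$ by observing that $\Ad(1+N)^{-1}$ can only lower $\ad(\lam)$-degree, so no nonzero element of $\ol\frn(\lam)$ can be carried into $\frp(\lam)$. Your open-cell formulation is an equivalent way of packaging the same degree argument, and your lower bound for $\dim\Psi(Q)$ is also fine.

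The gap you flag in the upper bound is genuine, and your proposed inductive scheme does not close it. You reduce the question to $\fru\cdot x_0\subseteq\ol\fru\cdot x_0+\frl(\lam)\cdot x_0$, but your ``leading correction $\ad(N_0)$'' observation only tells you that the degree~$-1$ part of $\pi(\Ad(g_0^{-1})D)$ is $[D,N_0]$; there is no reason an arbitrary element of $\frg_{-1}$ should be of this form, and indeed for $N_0$ in a non-open orbit it is not, so the induction does not start. What is actually needed is the stronger inclusion $\fru\cdot x_0\subseteq\frl(\lam)\cdot x_0$, i.e.\ that passing from $L(\lam)$ to $K(\lam)\cap P(\lam)=L(\lam)\ltimes U$ does not enlarge the orbit through $x_0$ in dimension. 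This is exactly the content of the next lemma in the paper (Lemma~\ref{l:p}): $L(\lam)\cdot\Phi(N)$ is dense in $(K(\lam)\cap P(\lam))\cdot\Phi(N)$, and the paper deduces it from the theorem of Zelevinsky (and independently Lusztig) that $L(\lam)\cdot\Phi(N)$ is dense in $P(\lam)\cdot\Phi(N)$. In other words, the ``In particular'' in the lemma statement is not a formal consequence of the free action alone; the paper's own proof of the lemma establishes only the free action, and the dimension equality is obtained once Lemma~\ref{l:p} is in hand (see the proof of Theorem~\ref{t:geom}, where $(K(\lam)\cap\ol N(\lam))\cdot\Phi(Q)$ is shown to be dense in $\Psi(Q)$). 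So the missing ingredient is not an elementary induction but the Zelevinsky--Lusztig density theorem.
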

\begin{proof}
The assertion amounts to proving that $\frk(\lam) \cap \ol{\frn}(\lam)\cap (1+N)\frp(\lam)(1+N)^{-1}$ is empty.
If this were not the case, then there would in particular be an element 
$Y \in \ol{\frn}(\lam)$ such that $[\Ad(1+N)^{-1}]Y \in \frp(\lam)$.
But $[\Ad(1+N)^{-1}Y]$ is a sum of $\ad(\lam)$ eigenvectors with strictly negative eigenvalues, and so cannot
belong to $\frp(\lam)$.
\end{proof}

\medskip

\noindent The previous lemma shows how $K(\lam) \cap \ol{N}(\lam)$ acts on the image of
$\Phi$; the next examines how $K(\lam) \cap P(\lam)$ acts.

\begin{lemma}  
\label{l:p} Fix
$N \in \frgone(\lam)$.
Then $L(\lam)\cdot\Phi(N)$ is dense in $(K(\lam) \cap P(\lam))\cdot \Phi(N)$.
\end{lemma}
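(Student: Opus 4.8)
The statement to prove is Lemma \ref{l:p}: for fixed $N \in \frgone(\lam)$, the orbit $L(\lam)\cdot\Phi(N)$ is dense in $(K(\lam)\cap P(\lam))\cdot\Phi(N)$. The plan is to show that the two orbits actually coincide (density of one orbit in another forces equality of the smaller-dimensional one with the closure, but in fact here I expect equality of the orbits themselves, which is even cleaner), by analyzing stabilizers and Lie algebras. First I would recall that $\frp(\lam) = \frl(\lam)\oplus\frn(\lam)$ is $\ad(\lam)$-graded with $\frl(\lam)$ the zero eigenspace, $\frn(\lam)$ the strictly positive integral eigenspaces, and $\frk(\lam)$ the even integral eigenspaces; and that $K(\lam)\cap P(\lam)$ has Lie algebra $\frk(\lam)\cap\frp(\lam) = (\frk(\lam)\cap\frl(\lam)) \oplus (\frk(\lam)\cap\frn(\lam))$, while $L(\lam) \subseteq K(\lam)$ has Lie algebra $\frl(\lam) = \frk(\lam)\cap\frl(\lam)$ (since $\frl(\lam)$ is the zero eigenspace, hence even).

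The key computation is at the tangent space to $\X(\lam)$ at the point $x = \Phi(N) = (1+N)\cdot\frp(\lam)$. The tangent space there is $\frg(\lam)/\mathrm{Ad}(1+N)\frp(\lam)$, and the tangent space to the $H$-orbit through $x$, for $H = L(\lam)$ or $H = K(\lam)\cap P(\lam)$, is the image of $\lie(H)$ under the infinitesimal action, i.e.\ $(\lie(H) + \mathrm{Ad}(1+N)\frp(\lam))/\mathrm{Ad}(1+N)\frp(\lam)$. So the first step is: prove
\[
\frl(\lam) + \mathrm{Ad}(1+N)\frp(\lam) \;=\; (\frk(\lam)\cap\frp(\lam)) + \mathrm{Ad}(1+N)\frp(\lam),
\]
which gives equality of the two orbit-tangent-spaces at $x$, hence (since the ambient variety and the point are the same, and the containment $L(\lam)\cdot x \subseteq (K(\lam)\cap P(\lam))\cdot x$ is obvious from $L(\lam) \subseteq K(\lam)\cap P(\lam)$ — note $L(\lam)$ normalizes $\frp(\lam)$) equality of the orbits by the homogeneity of each orbit and dimension count, or at minimum density. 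The containment "$\subseteq$" above is trivial since $\frl(\lam)\subseteq\frk(\lam)\cap\frp(\lam)$. For "$\supseteq$" it suffices to show $\frk(\lam)\cap\frn(\lam) \subseteq \frl(\lam) + \mathrm{Ad}(1+N)\frp(\lam)$. Here is where $N$ enters: write $N = \sum_{j\le -1} N_j$ — actually $N \in \frgone(\lam)$ is homogeneous of degree $-1$ — and for $Z \in \frk(\lam)\cap\frn(\lam)$ (so $Z$ is a sum of even positive eigenvectors), expand $\mathrm{Ad}(1+N)^{-1}Z = Z - [N,Z] + \tfrac12[N,[N,Z]] - \cdots$; each bracket with $N$ lowers the $\ad(\lam)$-degree by one. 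The degree-$\ge 1$ part of $\mathrm{Ad}(1+N)^{-1}Z$ lies in $\frn(\lam)\subseteq\frp(\lam)$; the negative-degree part is absorbed by correcting $Z$ with an element of $\frl(\lam)$ degree by degree, using a downward induction on the eigenvalue (this is the standard argument that conjugation by $1+N$ straightens a graded element into $\frl\oplus\frn$ modulo $\frl$). I would organize this as a clean inductive statement: $\frk(\lam)\cap\frn(\lam) \subseteq \frl(\lam) + \mathrm{Ad}(1+N)(\bigoplus_{j\ge 0}\frg(\lam)_j)$ where the grading index runs over nonnegative integers, exploiting that $N$ has degree $-1$ so $\mathrm{Ad}(1+N)$ is "upper unitriangular plus lower corrections."

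The main obstacle I anticipate is not the tangent-space bookkeeping but rather passing from "tangent spaces agree" to the actual geometric conclusion about orbits. One has to be a little careful: equality of tangent spaces at one point of each orbit, together with the orbit inclusion $L(\lam)\cdot x \subseteq (K(\lam)\cap P(\lam))\cdot x$, does give that $L(\lam)\cdot x$ is open in $(K(\lam)\cap P(\lam))\cdot x$ — because both are smooth locally closed subvarieties, the inclusion is an immersion at $x$, and matching dimensions forces it to be open near $x$, hence open everywhere by homogeneity under $L(\lam)$-translation on the source and the fact that $(K(\lam)\cap P(\lam))$ acts transitively on the target. An open subset of an irreducible variety is dense, which is exactly the claimed statement. (In fact, since any orbit map onto an orbit is a submersion, an open orbit inside another orbit must be the whole orbit, so one even gets equality, but density is all that is claimed and all that will be needed in the sequel — presumably combined with Lemma \ref{l:nbar} to compute $\dim(\Psi(Q))$ via the Bruhat-type decomposition $K(\lam) = (K(\lam)\cap\ol N(\lam))\cdot(K(\lam)\cap P(\lam))$ on the relevant cells.) I would close by noting this density is what is required to conclude, together with Lemma \ref{l:nbar}, that $\Psi$ preserves the relevant stratification combinatorics needed for Theorem \ref{t:geom}.
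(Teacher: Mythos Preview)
Your approach has a genuine gap at the key step. The tangent-space equality you want,
\[
\frl(\lam) + \mathrm{Ad}(1+N)\frp(\lam) \;=\; (\frk(\lam)\cap\frp(\lam)) + \mathrm{Ad}(1+N)\frp(\lam),
\]
is indeed true, but your sketched inductive argument does not prove it. Concretely: for $Z\in\frg_d(\lam)$ with $d\ge 2$ even, you need $Y\in\frl(\lam)$ with $\mathrm{Ad}(1+N)^{-1}(Z-Y)\in\frp(\lam)$. Already the degree~$-1$ condition reads $[N,Y]=-Z_{-1}$ for a specific $Z_{-1}\in\frg_{-1}(\lam)$, and there is no a~priori reason $Z_{-1}\in[N,\frl(\lam)]$; the map $\ad N:\frg_0(\lam)\to\frg_{-1}(\lam)$ is not surjective for general $N$. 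Worse, once $Y$ is constrained by the degree~$-1$ condition, the degree~$-2,-3,\dots$ conditions must hold automatically, and you give no mechanism for this. The phrase ``standard argument that conjugation by $1+N$ straightens a graded element into $\frl\oplus\frn$ modulo $\frl$'' is not a standard argument; it is essentially the content of the Lusztig--Zelevinsky theorem that $L(\lam)\cdot\Phi(N)$ is dense in $P(\lam)\cdot\Phi(N)$, a genuinely nontrivial type-A result. (A small side issue: your expansion $\mathrm{Ad}(1+N)^{-1}Z=Z-[N,Z]+\tfrac12[N,[N,Z]]-\cdots$ is the formula for $\exp(-\ad N)$, but $1+N\ne\exp(N)$ when $N^2\ne 0$; the correct expansion uses $\log(1+N)\in\bar\frn(\lam)$. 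This does not affect the graded structure but should be fixed.)

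The paper's proof avoids all of this by simply invoking the Lusztig--Zelevinsky result directly: $L(\lam)\cdot\Phi(N)$ is dense in the full $P(\lam)$-orbit $P(\lam)\cdot\Phi(N)$, and since $L(\lam)\subset K(\lam)\cap P(\lam)\subset P(\lam)$, density in the intermediate $(K(\lam)\cap P(\lam))$-orbit is immediate. Your tangent-space equality is in fact equivalent to this density (equal tangent spaces at one point give openness by homogeneity, hence density; conversely density of smooth orbits forces equal dimensions hence equal tangent spaces), so you are attempting to reprove Zelevinsky from scratch. Either cite \cite{zel} as the paper does, or be prepared to supply a full proof of the graded-nilpotent-class statement, which is considerably more than the sketch you give.
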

\begin{proof}
The main result of \cite{zel} (due independently to Lusztig) asserts
that $L(\lam)\cdot \Phi(N)$ is dense in $P(\lam)\cdot\Phi(N)$.
Since $L(\lam) \subset K(\lam)$, the lemma follows.
\end{proof}

\medskip

\begin{proof}[Proof of Theorem \ref{t:geom}]
Let $Y$ denote the union of $K(\lam)$ orbits in $X(\lam) =G(\lam)/P(\lam)$
which meet the closure (in $X(\lam)$) of $\Phi(\frgone(\lam))$.
Set
\[
Y_\circ := (K(\lam) \cap\ol{N}(\lam))\cdot \Phi(\frgone(\lam)).
\]
Let $A = \frk(\lam) \cap \ol{\frn}(\lam)$, an affine space.
Lemma \ref{l:nbar} implies that
\begin{align*}
\varphi \; : \;A \times \frgone(\lam) &\longrightarrow Y_\circ \\
(a,N) &\longrightarrow \exp(a)\cdot \Phi(N)
\end{align*}
is an isomorphism.  Lemma \ref{l:p} implies $Y_\circ$ is dense in $Y$.
We next observe that analogous facts also hold on the level of the
relevant stratifications leading to the definitions of $M_\bbR^g$ and $M_\bbH^g$
in Theorem \ref{t:geom}.

Fix an orbit $Q$ in the stratification of $\frgone(\lam)$
by $L(\lam)$ orbits.  
Then, as above, the previous two lemmas
imply that $\Psi(Q)$ is the unique stratum in the stratification of $Y$
by $K(\lam)$ orbits such that
$\varphi(A \times Q) \simeq A \times Q$ is dense in $\Psi(Q)$; and, moreover, all
strata in $Y$ arise in this way.  
Thus the topological properties of intersection
homology imply the assertion of Theorem \ref{t:geom}.
\end{proof}

\medskip
With notation as in Sections \ref{ssec:glclass} and \ref{ssec:hclass},
define the ``pullback'' of $\Psi^g$, 
\begin{equation}
\label{e:Psi}
\Psi \; : \; \PR_n \longrightarrow \PH_n \cup \{0\},
\end{equation}
as follows.  Fix $\gamma\in \PR_n$.  If there exists
$\gamma'\in \PH_n$ such
that
\[
\Psi^g(d_\bbH(\gamma')) = d_\R(\gamma),
\]
then $\gamma'$ is unique and we set
\[
\Psi(\gamma) = \gamma'.
\]
If no such $\gamma'$ exists, set $\Psi(\gamma) = 0$.
Theorems \ref{t:glgeomclass} and \ref{t:hgeomclass}
immediately give the following corollary.

\begin{corollary}
\label{c:geom}
Suppose $\Psi(\gamma)  \neq 0$ 
and $\Psi(\eta) \neq 0$.  Then
\[
M_\R(\gamma,\eta) = M_\bbH(\Psi(\gamma),\Psi(\eta)).
\]
\end{corollary}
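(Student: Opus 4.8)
The plan is to deduce Corollary~\ref{c:geom} directly from Theorem~\ref{t:geom} together with the two geometric Langlands classifications, Theorems~\ref{t:glgeomclass} and~\ref{t:hgeomclass}. First I would fix the infinitesimal/central character: since $\Psi(\gamma)\neq 0$ and $\Psi(\eta)\neq 0$, the definition \eqref{e:Psi} guarantees $\gamma,\eta\in\PR_n(\lam)$ for a common $\lam$ and $\Psi(\gamma),\Psi(\eta)\in\PH_n(\lam)$ for the corresponding $\lam$, with $d_\R(\gamma)=\Psi^g(d_\bbH(\Psi(\gamma)))$ and $d_\R(\eta)=\Psi^g(d_\bbH(\Psi(\eta)))$ by construction. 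Abbreviate $Q=d_\bbH(\Psi(\gamma))$ and $Q'=d_\bbH(\Psi(\eta))$, orbits of $L(\lam)$ on $\frgone(\lam)$; then $d_\R(\gamma)=\Psi(Q)$ and $d_\R(\eta)=\Psi(Q')$ as $K(\lam)$-orbits on $\X(\lam)$, identifying local systems with orbits as in the text.

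Next I would chain the three identities. By Theorem~\ref{t:glgeomclass},
\[
M_\R(\gamma,\eta)=\pm M_\R^g\bigl(d_\R(\eta),d_\R(\gamma)\bigr)=\pm M_\R^g\bigl(\Psi(Q'),\Psi(Q)\bigr),
\]
with the sign given by the parity of $\dim(d_\R(\gamma))-\dim(d_\R(\eta))$. By Theorem~\ref{t:geom}, $M_\R^g(\Psi(Q'),\Psi(Q))=M_\R^g(\Psi(Q),\Psi(Q'))$—wait, more carefully: Theorem~\ref{t:geom} says $M_\R^g(Q,Q')=M_\bbH^g(\Psi(Q),\Psi(Q'))$, so applied with the arguments $\Psi(Q')$ and $\Psi(Q)$ in the roles appearing above I get $M_\R^g(\Psi(Q'),\Psi(Q))$ is \emph{not} directly in that form; rather I should read Theorem~\ref{t:geom} as the statement that $M_\R^g$ evaluated on a pair of $L(\lam)$-orbits-pushed-forward equals $M_\bbH^g$ on the original pair. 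So I would instead write $M_\R^g(\Psi(Q'),\Psi(Q))=M_\bbH^g(Q',Q)$ directly from Theorem~\ref{t:geom} (with $Q,Q'$ there taken to be $Q',Q$ here). Finally, by Theorem~\ref{t:hgeomclass},
\[
M_\bbH^g(Q',Q)=M_\bbH^g\bigl(d_\bbH(\Psi(\eta)),d_\bbH(\Psi(\gamma))\bigr)=\pm M_\bbH(\Psi(\gamma),\Psi(\eta)),
\]
the sign now being the parity of $\dim(d_\bbH(\Psi(\gamma)))-\dim(d_\bbH(\Psi(\eta)))$.

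It remains to check the two signs cancel. This is where Lemma~\ref{l:nbar} does the work: $\dim\Psi(Q)=\dim Q+\dim(K(\lam)\cap\ol N(\lam))$ and likewise $\dim\Psi(Q')=\dim Q'+\dim(K(\lam)\cap\ol N(\lam))$, so the constant $\dim(K(\lam)\cap\ol N(\lam))$ drops out of the difference and
\[
\dim\bigl(d_\R(\gamma)\bigr)-\dim\bigl(d_\R(\eta)\bigr)=\dim\Psi(Q)-\dim\Psi(Q')=\dim Q-\dim Q'=\dim\bigl(d_\bbH(\Psi(\gamma))\bigr)-\dim\bigl(d_\bbH(\Psi(\eta))\bigr).
\]
Hence the two $\pm$ signs are equal, they cancel, and $M_\R(\gamma,\eta)=M_\bbH(\Psi(\gamma),\Psi(\eta))$.

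The only real subtlety—and the step I would be most careful about—is bookkeeping: making sure the variance of the arguments in \eqref{e:gldecomp} versus \eqref{e:glgeomdecomp} (note $M_\R(\gamma,\eta)$ pairs with $M_\R^g(d_\R(\eta),d_\R(\gamma))$, i.e.\ the arguments are swapped) is tracked consistently through all three invocations, and that the sign assertion in Theorems~\ref{t:glgeomclass} and~\ref{t:hgeomclass} refers to the same difference of dimensions after the swap. Everything else is a formal concatenation of bijections, so no genuinely new idea is needed beyond Theorem~\ref{t:geom} itself; the dimension identity from Lemma~\ref{l:nbar} is exactly what is engineered to make the signs match.
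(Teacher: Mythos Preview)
Your argument is correct and is exactly the intended one: the paper simply asserts the corollary follows from Theorems~\ref{t:glgeomclass} and~\ref{t:hgeomclass} (together with Theorem~\ref{t:geom}, which immediately precedes it), and you have written out those details, including the sign cancellation via Lemma~\ref{l:nbar}. Your care in reading Theorem~\ref{t:geom} in the correct variance (so that $M_\R^g(\Psi(Q'),\Psi(Q))=M_\bbH^g(Q',Q)$) and in tracking the argument swap between $M_\R$ and $M_\R^g$ is precisely what the paper leaves implicit.
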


\medskip
The corollary has the following consequence. 
Suppose $\caF$ were an exact functor from $\HC_n$ to $\H_n$
such that
\beq
\label{e:caF}
\caF(\std_\R(\gamma)) = \std_\bbH(\Psi(\gamma))
\eeq
if $\Psi(\gamma) \neq 0$ and $\caF(\std_\bbR(\gamma)) = 0$ otherwise.
Then Corollary \ref{c:geom} immediately allows us to conclude
\beq
\label{e:caF2}
\caF(\irr_\bbR(\gamma)) = \irr_\bbH(\Psi(\gamma))
\eeq
if $\Psi(\gamma) \neq 0$ and $\caF(\irr_\bbR(\gamma)) = 0$ otherwise.

In the next two sections we will prove that the functor $F_{n,n}$ of 
the introduction satisfies \eqref{e:caF}, and hence \eqref{e:caF2},
but only when restricted to those parameters $\gam \in \PH_n$
of level at least $n$ (in the sense of Section \ref{ssec:lev}).

\section{images of standard modules}
\label{s:std}

In this section we define the functor $F_{n,k}:\HC_n\to \H_k$ 
carefully and
compute the images of certain standard modules
(those of level $\ge k$ in the language of Section \ref{ssec:lev}). 

\subsection{The functor $F_{n,k}$} 
\label{s:fnk}
For
the computations below it will
be convenient to introduce coordinates. In this paragraph
only, let $a$ denote $n$ or $k$.
Recall that we have identified $\frh$ and $\frh^*$ using the
trace form.  We further fix an isomorphism to $\bbC^a$ so
the pairing $\langle\cdot,\cdot\rangle$ of $\frh$ and $\frh^*$ becomes the usual dot product
in $\C^a$. We set 
\[
\text{$\al_i=(\underbrace{0,\dots 0}_{i-1},1,-1,0\dots,0)\in\C^a,$ $1\le i\le
a-1$,}
\]
identify the simple roots of $\frh$ in $\frg$ with $\{\al_1,\dots, \al_{a-1}\}$,
and set
\[
\text{
$\epsilon_j=(\underbrace{0,\dots,0}_{j-1},1,0,\dots,0),$ $1\le j\le a$
}
\] 
 In these coordinates, we have
$\rho=(\frac{a-1}2,\frac{a-3}2,\dots,-\frac{a-1}2).$ The 
reflections in the simple roots $\al_i$ will be denoted by $s_i.$

Given an object $X$ in $\HC_n$, consider
\[
\Hom_{K_\R}(\triv,(X\otimes\sgn)\otimes V^{\otimes k}),
\]
where $V=\C^n$ is the standard representation of $\frg$.  As in \cite[2.2]{AS},
we now define an action of $\bbH_k$ on $F_{n,k}(X)$, thus obtaining
an exact covariant functor
\[
F_{n,k}\; : \; \HC_n \longrightarrow \H_k.
\]
To begin, let
$B$ and $B^*$ denote two orthonormal bases
of $\frg=\mathfrak{gl}(n)$ (with respect to the trace form inner product $(A,B) = tr(AB)$).
Assume further that  they are dual to each other
in the sense that there is a bijection $B\to B^*$ such that the corresponding matrix
representation of $(\cdot, \cdot)$ is the identity.  For $E \in B$, write $E^*$ for
its image in $B^*$.
For $0\le i<j\le k$ define the operator
$\Omega_{i,j}\in  \End((X\otimes \sgn)\otimes V^{\otimes k})$ 
\begin{equation}
\Omega_{i,j}=\sum_{E\in B}(E)_i\otimes (E^*)_j,
\end{equation}
where, for simplicity of notation, we abbreviated the operator $1^{\otimes i}\otimes E\otimes
1^{\otimes j-i-1}\otimes E^*\otimes 1^{\otimes k-j}$ by
$(E)_i\otimes (E^*)_j.$
The definition of $\Omega_{i,j}$ does not depend on the choice of orthonormal
bases. Moreover if $1\le i<j\le k$, notice that $\Omega_{i,j}$
acts
by permuting the factors of $V^{\otimes k}$.

Consider the map $\Theta$ from $\bbH_k$ to $ \End((M\otimes \sgn)\otimes V^{\otimes k})$ 
defined by
\begin{equation}\label{theta}
\Theta(s_i)=-\Omega_{i,i+1},\ 1\leq i\leq k, \qquad \Theta(\epsilon_\ell)=\sum_{0\le
  l<\ell}\Omega_{l,\ell}+\frac {n-1}2,\ 1\le l\le k.
\end{equation}
It is not difficult to verify this definition respects the
commutation relation in $\bbH_k.$ It is also easy to see that the action
of $\bbH_k$ defined by $\Theta$ commutes with the $\frg$-action and the diagonal $K_\R$-action.
Thus we obtain the desired action of $\bbH_k$.

\begin{remark}
\label{r:fine}
{It is interesting to replace the trivial $K_\R$ type in the definition of
$F_{n,k}$ with other $K_\R$ types which 
are fine in the sense of Vogan (e.g.~\cite[Definition 4.3.9]{vogan:green}).
We shall return to this elsewhere.}
\end{remark}

\subsection{A notion of level for $\HC_n$}
\label{ssec:lev}
In order to state the main computation of $F_{n,k}$ applied to 
standard modules (Theorem \ref{t:stdcomp}), 
we must consider finer structure on the set $\PR_n$ of Section \ref{ssec:glclass}.
We define
\beq
\label{e:lev}
\lev \; : \; \PR_n \lra \bbZ^{\geq 0}
\eeq
as follows.  
Fix a (representative of a) parameter
$\gamma = (\delta, P_\R) \in \PR_n$.  Write $P_\R = L_\R N_\R$ and
\[
L_\R=\GL(n_1,\R)\times\dots\GL(n_r,\R) \qquad \delta=\delta_1\boxtimes\dots\boxtimes\delta_r.
\]
Define  
\begin{equation}\label{e:lev2}
\text{lev}(\gamma):=\sum_{i=1}^r \text{lev}(\delta_i), \text{ where } \text{lev}(\delta_i)=\left\{
\begin{aligned}&1,&\text{if }\delta_i=\delta(\triv,\nu_i) \text{ (as in (\ref{e:gl1ds}))}\\
&0,&\text{if }\delta_i=\delta(\sgn,\nu_i) \text{ (as in (\ref{e:gl1ds}))}\\
&l_i,&\text{if }\delta_i=\delta(l_i,\nu_i) \text{ (as in (\ref{e:gl2ds}))}\\
\end{aligned}\right.
\end{equation}
Clearly this is well-defined independent of the choice of representative for $\gamma$.
Informally, $\lev(\gamma)$ is a kind of measure of the size of the lowest $L_\R \cap K_\R$ type
of $\delta$, and thus the lowest $K_\R$ type of $\std_\R(\gamma)$ or $\irr_\R(\gamma)$.
(This can be made precise but we have no occasion to do so here.)

For $l \geq 0$, define
\[
\PR_{n,l} = \lev^{-1}(l),
\]
the parameters of level exactly $l$,
and $\PR_{n,\geq l} = \cup_{m \geq l} \PR_{n,m}$, the parameters of
level at least $l$.  We say $\std_\R(\gamma)$ or $\irr_\R(\gamma)$ is of level
$l$ if $\gamma$ is.
Define $\HC_{n,\geq l}$ to be the full subcategory of $\HC_n$ whose objects are subquotients
of standard modules of level at least $l$.  

{
\begin{example}
\label{ex:sph}
Suppose $\std_\R(\gamma)$ is a spherical (minimal) principal series.  Then $\gamma = (B_\R,\delta)$
for a Borel subgroup $B_\R$ and $\delta = \delta_1 \boxtimes \cdots \boxtimes \delta_n$ with
each $\delta_i$ of the form $\delta(\mathsf 1, \nu_i)$.  Thus $\gamma$ has level $n$, and
every subquotient of a spherical principal series for $G_\R$ is an object
in $\HC_{n,\geq n}$.
\end{example}

\medskip

Though we make no essential use of it, we include the following result
as indication that level is a reasonable notion.
}
\begin{prop}
\label{p:level}
Any irreducible object in $\HC_{n,\geq l}$ is
of the form $\irr_\R(\gamma)$ for $\gamma \in \PR_{n,\geq l}$.  More precisely,
every irreducible subquotient of a standard module of level $l$ has level at least $l$.
\end{prop}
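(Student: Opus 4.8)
The plan is to reduce the statement to a fact about lowest $K_\R$-types and $\mathrm{K}$-type multiplicities, since $\lev(\gamma)$ is — as the remark after \eqref{e:lev2} indicates — a numerical invariant of the lowest $K_\R$-type of $\std_\R(\gamma)$. First I would make this precise: given $\gamma = (P_\R,\delta)$ with $L_\R = \GL(n_1,\R)\times\cdots\times\GL(n_r,\R)$, the lowest $K_\R = O(n)$-type $\mu(\gamma)$ of $\std_\R(\gamma)$ is obtained by inducing the lowest $L_\R\cap K_\R$-type of $\delta$, i.e.\ it is built out of the $O(n_i)$-types $\triv$, $\sgn$, or $V(l_i)$ according to the three cases in \eqref{e:lev2}. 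I would attach to any $O(n)$-type the number $\lev$ defined by the same recipe on its ``highest weight'' data (number of $\SO(n)$-coordinates contributing, counted with the $V(l)$-convention), so that $\lev(\mu(\gamma)) = \lev(\gamma)$, and then observe the key monotonicity: if $\mu$ is an $O(n)$-type and $\mu'$ occurs in $\mu \otimes F$ for some finite-dimensional $\frg$-representation $F$ appearing in the relevant tensor/Zuckerman translation picture, then $\lev(\mu') \le \lev(\mu) + (\text{something controlled})$; more usefully, among the $K_\R$-types of a single standard module $\std_\R(\gamma)$, the lowest one $\mu(\gamma)$ minimizes $\lev$, so every $K_\R$-type $\mu$ of $\std_\R(\gamma)$ satisfies $\lev(\mu)\ge \lev(\gamma)$.

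Granting that, the argument runs as follows. Let $\irr_\R(\eta)$ be an irreducible subquotient of a standard module $\std_\R(\gamma)$ of level $l$. Its lowest $K_\R$-type $\mu(\eta)$ occurs in $\std_\R(\gamma)$, hence $\lev(\eta) = \lev(\mu(\eta)) \ge \lev(\gamma) = l$ by the minimality statement, which is exactly the claim (and the first sentence of the proposition follows, using Theorem \ref{t:glclass}). So the whole proposition collapses to: \emph{the lowest $K_\R$-type of $\std_\R(\gamma)$ has the smallest value of $\lev$ among all its $K_\R$-types}. To prove this I would use the Blattner-type $K_\R$-multiplicity formula for the parabolically induced module $\Ind_{P_\R}^{G_\R}(\delta)$: by Frobenius reciprocity, an $O(n)$-type $\mu$ occurs in $\std_\R(\gamma)$ iff $\Hom_{L_\R\cap K_\R}(\mu|_{L_\R\cap K_\R}, \delta|_{L_\R\cap K_\R})\ne 0$, and for the relative discrete series $\delta_i$ of $\GL(1,\R)$ or $\GL(2,\R)$ the restriction to $O(n_i)$ is explicitly the list in Section \ref{ssec:glclass} ($\triv$, or $\sgn$, or $\bigoplus_{k\ge 0}V(l_i+2k)$). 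Combining this with the classical branching rule $O(n)\downarrow O(n_1)\times\cdots\times O(n_r)$ (equivalently, iterated $O(m)\downarrow O(m-1)$ branching together with the $V(k)\otimes V(l)$ rule quoted in the text), one sees that any $\mu$ in $\std_\R(\gamma)$ must, in each block, contain a constituent ``at least as large as'' the generating type of $\delta_i$, and these sizes add; this forces $\lev(\mu)\ge\sum_i\lev(\delta_i)=l$, with equality realized by the lowest type.

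The main obstacle I anticipate is organizing the branching-rule bookkeeping cleanly: the group is the \emph{disconnected} group $O(n)$, so one must track the $\sgn$-twists carefully (this is presumably why the $\delta(\sgn,\nu)$ case contributes $0$ rather than $1$), and the $V(0)=\triv\oplus\sgn$ convention and the $V(k)\otimes V(l)\simeq V(|k-l|)\oplus V(k+l)$ identity have to be used to control how a small $\SO$-weight in one block can be ``cancelled'' against another. I would handle this by defining $\lev$ on $O(n)$-types directly via their decomposition into $\SO(n)$-weights (or via the standard $O(n)$ highest-weight parametrization by partitions) and checking the additivity/monotonicity under $O(m)\times O(m')\hookrightarrow O(m+m')$ branching as a self-contained lemma — after which the proposition is immediate. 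If that lemma turns out delicate, a fallback is to invoke instead the known description of lowest $K$-types of standard modules and Vogan's result that every $K$-type of $\std_\R(\gamma)$ is ``higher'' than $\mu(\gamma)$ in the appropriate order (e.g.\ \cite[Chapter VII]{knapp}, \cite{vogan:green}), and then just verify that $\lev$ is monotone for that order.
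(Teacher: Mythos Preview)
Your approach is viable but takes a genuinely different route from the paper's. The paper's proof is a one-line sketch: it invokes the Bruhat $\caG$-order of \cite[Definition 5.8]{v:ic3}, which is a partial order on Langlands parameters with the property that $\irr_\R(\eta)$ occurs as a subquotient of $\std_\R(\gamma)$ only if $\gamma$ and $\eta$ are comparable in that order; one then verifies combinatorially that $\lev$ is monotone along $\caG$-chains. So the paper works entirely on the parameter side and appeals to existing structure theory, whereas you work on the $K_\R$-type side and propose to extract the result from branching for $O(n)\downarrow O(n_1)\times\cdots\times O(n_r)$.

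What each approach buys: the paper's route is shorter once the $\caG$-order machinery is in hand, and it makes transparent why the proposition is really a statement about closure relations of Langlands parameters (consonant with the rest of Section~\ref{s:geom}). Your route is more self-contained and closer in spirit to the explicit computations already carried out in Section~\ref{s:std}; indeed, your key step---that every $K_\R$-type $\mu$ of $\std_\R(\gamma)$ satisfies $\lev(\mu)\ge\lev(\gamma)$---is essentially a repackaging of the Frobenius-reciprocity argument in the proof of Lemma~\ref{l:dim}, since that lemma shows $\lev(\gamma)$ is the minimal $k$ for which $\sgn$ appears in $\std_\R(\gamma)\otimes V^{\otimes k}$. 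The one point you should nail down is the equality $\lev(\mu(\eta))=\lev(\eta)$ (that the minimum is attained precisely at the lowest $K_\R$-type), which amounts to checking that for $\GL(n,\R)$ the lowest $K_\R$-type determines the discrete Langlands data; this is standard but worth stating, and your fallback to \cite{vogan:green} covers it.
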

\begin{proof}[Sketch]
By explicitly examining the Bruhat $\caG$-order of \cite[Definition 5.8]{v:ic3}, 
one may verify the proposition directly.
\end{proof}

We next define a map
\begin{equation}
\label{e:gammank}
\Gamma_{n,k} \; : \; \PR_{n,\geq k} \rightarrow \PH_k \cup \{0\}.
\end{equation}
as follows.
If $\lev(\gamma) > k$, set $\Gamma_{n,k}(\gamma) = 0$.
Next assume $\lev(\gamma) = k$ and fix a representative $(P_\R,\delta)$
of $\gamma$ with $\delta = \delta_1 \boxtimes \cdots \boxtimes \delta_r$. 
We define a representative of $\Gamma_{n,k}(\gamma)$ as follows.
Let $\frp = \frp_\lev(\gamma)$ denote the block upper triangular parabolic subalgebra
of $\frg\frl(k,\C)$ with Levi factor $\frl = \frl_\lev(\gamma)$ consisting of 
(ordered) diagonal blocks of size $\lev(\delta_1),
\dots \lev(\delta_r)$ (with notation as in \eqref{e:lev2}).  
Write $\bbH_P$
for the corresponding subalgebra of $\bbH_k$.
Let $t$ denote the number of $\GL(2,\R)$ factors in $P_\R$
and $m$ denote the number of $\GL(1,\R)$ factors in $P_\R$ whose
corresponding relative discrete series in $\delta$ is trivial when
restricted to $O(1)$.
 Thus there are $m+t$ simple factors in 
$\frl$.  List, in order, the corresponding relative discrete series in $\delta$
as $\delta_1', \dots, \delta_{m+t}'$.  Write $\nu_i'$ for the central character
of $\delta_i'$.  
Let $\delta^\bbH$ be the relative discrete series  $\delta^\bbH_1
\boxtimes \cdots \boxtimes \delta^\bbH_{m+t}$ of $\bbH_P$ with
$\delta^\bbH_i = \St \otimes \bbC_{\nu_i'}$.  Then $(\bbH_P,
\delta^\bbH)$ specifies an element of $\PH_k$, which we
define to be $\Gamma_{n,k}(\gamma)$.  For later use
we note that the central character of
$\std_\bbH(\Gamma_{n,k}(\gamma))$
is 
\begin{equation}\label{e:cc}
\left ( -\rho\left (\frg\frl(\lev(\delta'_1)) \right) + \nu_1' \; \bigr | \; \cdots  \cdots \; \bigr | \;  
-\rho\left (\frg\frl(\lev(\delta'_{m+t})) \right) + \nu_{m+t}' \right),
\end{equation}
where the vertical lines denote concatenation.

\begin{remark}
\label{r:notdom}
Note the identical construction could be made for parameters $(P_\R,\delta)$
not satisfying the dominance hypothesis of \eqref{e:dom}.  The resulting
pair $(\bbH_P,\delta^\bbH)$ would then be well-defined but need not
satisfy the dominance of \eqref{e:hdom}.
\end{remark}

With the convention that $\std(0) = 0$, the main result of this section is as follows.

\begin{theorem}
\label{t:stdcomp}
Recall the map $\Gamma_{n,k}$ of  \eqref{e:gammank} .  Then
\[
F_{n,k}\left ( \std_\R(\gamma)\right ) = \std_\bbH\left (\Gamma_{n,k}(\gamma)\right )
\]
as $\bbH_k$ modules
for all $\gamma \in \PR_{n,\geq k}$.
\end{theorem}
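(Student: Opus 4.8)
{\bf Proof strategy.} The plan is to reduce the computation of $F_{n,k}(\std_\R(\gamma))$ to an induction-in-stages statement together with an explicit base case. First I would exploit the fact that $F_{n,k}$ is built from $\Hom_{K_\R}(\triv,-\otimes V^{\otimes k})$: since $\std_\R(\gamma) = \Ind_{P_\R}^{G_\R}(\delta)$ is parabolically (normalized) induced, I would combine the decomposition of $V^{\otimes k}$ restricted to $L_\R$ (really, the branching of the standard representation of $\GL(n,\C)$ to the Levi $\frg\frl(n_1)\oplus\cdots\oplus\frg\frl(n_r)$, together with the combinatorics of which tensor slots land in which block) with Frobenius reciprocity in the form of Mackey theory for $(\frg,K_\R)$-modules. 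The outcome I expect is an isomorphism of $\bbH_k$-modules expressing $F_{n,k}(\std_\R(\gamma))$ as $\bbH_k \otimes_{\bbH_P} \left( \bigotimes_i F_{n_i,\lev(\delta_i)}(\delta_i)\right)$, where $\bbH_P$ is exactly the parabolic subalgebra in the definition of $\Gamma_{n,k}$ and the tensor factors are $F$ applied to the relative discrete series pieces. This is the real-group analogue of the corresponding computation in \cite{AS}, and the compatibility of the $\bbH_k$-action (defined via the operators $\Omega_{i,j}$ of \eqref{theta}) with the parabolic induction $\bbH_k\otimes_{\bbH_P}(-)$ is the structural heart of this step: one checks that under the branching the operators $\Omega_{i,j}$ with $i,j$ in the same block become the $\Omega$'s defining the $\bbH_{n_i}$-action and that the ``cross-block'' $\Omega_{i,j}$'s implement the induction.

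{\bf The base cases.} Next I would compute $F_{n_i,l}(\delta_i)$ for each of the three types of relative discrete series in \eqref{e:lev2}. For a $\GL(1,\R)$ factor with $\delta_i = \delta(\sgn,\nu_i)$, tensoring with $\sgn$ makes the relevant $O(1)$-invariants vanish unless $l=0$ slots are used, so $F_{1,0}(\delta(\sgn,\nu)) = \C$ (the trivial $\bbH_0$-module) and $F_{1,l} = 0$ for $l\ge 1$; this is why such factors contribute $0$ to the level and are dropped in $\Gamma_{n,k}$. For $\delta_i = \delta(\triv,\nu_i)$ one finds $F_{1,1}(\delta(\triv,\nu))$ is one-dimensional with $S(\frh_1)$ acting by $\nu - \rho(\frg\frl(1)) = \nu$ and $\C[W_1]$ acting trivially, i.e.\ it is $\St\otimes\bbC_{\nu}$ for $\frg\frl(1)$ (where $\St = \triv$). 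The genuinely substantive base case is $\delta_i = \delta(l_i,\nu_i)$ for $\GL(2,\R)$: here I would use the explicit $O(2)$-decomposition $D_{l_i}|_{O(2)} = \bigoplus_{j\ge 0} V(l_i+2j)$, together with $V^{\otimes l_i}$ for $V = \C^2$ decomposed under $O(2)$, to show that $\Hom_{O(2)}(\triv, (D_{l_i}\otimes\sgn)\otimes|\det|^{\nu_i}\otimes V^{\otimes l_i})$ is one-dimensional (the lowest $O(2)$-type $V(l_i)$ pairs with the ``bottom'' of $V^{\otimes l_i}$ in exactly one way), and then verify the $\bbH_{l_i}$-structure of this line is precisely $\St\otimes\bbC_{\nu_i}$ by computing the scalars by which $\Theta(\epsilon_\ell)$ and $\Theta(s_i)$ act via the Casimir-type operators $\Omega_{0,\ell}$ — the shift by $\frac{n-1}{2}$ in \eqref{theta} combined with the eigenvalue of the $\frg\frl(2)$ Casimir on $D_{l_i}$ should produce the weight $-\rho(\frg\frl(l_i)) + \nu_i$ matching \eqref{e:cc}, and the antisymmetry of $V^{\otimes l_i}$ on that line forces the sign action of $\C[W_{l_i}]$.

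{\bf Bookkeeping and the main obstacle.} With the induction step and base cases in hand, I would assemble them: $F_{n,k}(\std_\R(\gamma)) = \bbH_k\otimes_{\bbH_P}\bigboxtimes_i F_{n_i,\lev(\delta_i)}(\delta_i) = \bbH_k\otimes_{\bbH_P}\bigboxtimes_{j=1}^{m+t}(\St\otimes\bbC_{\nu_j'})= \std_\bbH(\Gamma_{n,k}(\gamma))$ when $\lev(\gamma)=k$, and $=0$ when $\lev(\gamma)>k$ (since then some block needs more slots than are available, or equivalently the total $V^{\otimes k}$ cannot supply enough standard-representation factors to each $\GL(2)$ piece, forcing a vanishing $O(2)$-invariant somewhere). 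One must also check the central character comes out as \eqref{e:cc}, which is automatic once the base cases are pinned down, and that the dominance condition \eqref{e:hdom} on $\Gamma_{n,k}(\gamma)$ follows from \eqref{e:dom} on $\gamma$ (dividing $\Re(\nu_i)$ by $n_i$ and then forgetting $\sgn$-type $\GL(1)$'s preserves the ordering on the surviving blocks).

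The main obstacle I anticipate is the induction-in-stages step — specifically, proving that the $\bbH_k$-action assembled from the $\Omega_{i,j}$ is genuinely the induced action $\bbH_k\otimes_{\bbH_P}(-)$ and not merely abstractly isomorphic to something of that dimension. This requires tracking carefully how $V^{\otimes k}|_{L_\R}$ distributes tensor slots among the blocks (a sum over set partitions / shuffles), identifying the ``diagonal'' summand that survives the $K_\R$-invariants, and checking that on that summand the generators $s_i$ and $\epsilon_\ell$ of $\bbH_k$ act by the standard formulas relative to $\bbH_P$ — i.e.\ that the cross-terms $\Omega_{i,j}$ with $i\le \lev(\delta_1)+\cdots$ in an earlier block and $j$ in a later one act as the ``intertwining'' part of the induced module rather than by zero or by something spurious. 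This is routine in spirit (it is exactly the mechanism of \cite[2.2]{AS}) but the real-group Mackey-theoretic bookkeeping, and the interaction of the $\sgn$-twist with the $O(1)$ and $O(2)$ branching, is where the care is needed.
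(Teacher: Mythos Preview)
Your approach differs from the paper's. You propose an induction-in-stages strategy: establish $F_{n,k}(\Ind_{P_\R}^{G_\R}\delta) \cong \bbH_k \otimes_{\bbH_P} \bigotimes_i F_{n_i,\lev(\delta_i)}(\delta_i)$ abstractly, then compute the base cases on each discrete series factor. The paper instead works with a single explicit cyclic vector. After establishing the vector-space and $\C[W_k]$-module statements via Frobenius reciprocity and $O(1)/O(2)$ branching (essentially as you sketch), it writes down an explicit $v_\gamma \in \delta \otimes V^{\otimes k}$ built from the lowest $L_\R\cap K_\R$-type vectors of $\delta$ tensored with suitable $\SO(2)$-weight vectors in $V$, lifts it to a function $f_\gamma$ in the induced picture, and computes $\Theta(\epsilon_\ell)f_\gamma$ directly by decomposing $\Omega_{0,\ell}$ along $\frg = \frn \oplus \frl \oplus \overline\frn$. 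The $\frn$-part vanishes; the $\overline\frn$-part (after using $K_\R$-invariance of $f_\gamma$ to trade $\overline\frn$-derivatives for $\frk$-derivatives) contributes exactly the cross-block terms $-\sum_l \Omega_{l,\ell}$ plus a scalar shift; the $\frl$-part yields the Levi eigenvalue, including the $\rho$-shift from normalized induction. Summing gives \eqref{e:cc}.

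Your strategy should succeed, but the obstacle you flag is not routine. Verifying that the cross-block $\Omega$'s implement $\bbH_k \otimes_{\bbH_P}(-)$ is precisely the $\overline\frn$-computation above, and it must also absorb the discrepancy between the normalizing constant $\tfrac{n-1}{2}$ in $\Theta$ for $\GL(n)$ and $\tfrac{n_i-1}{2}$ for each block (which is cancelled by the $\rho$-shift of normalized real parabolic induction) --- a point your sketch does not address. Similarly, your $\GL(2)$ base case $F_{2,l}(\delta(l,\nu)) \cong \St \otimes \C_\nu$ already requires the Levi portion of the same eigenvalue calculation. So the two routes share their computational core; the paper simply fuses the $\overline\frn$- and $\frl$-contributions into one cyclic-vector computation rather than separating an induction step from base cases. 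Your route would have the merit of isolating the structural statement that $F$ intertwines the two parabolic inductions, which the paper does not make explicit.
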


\begin{remark}
\label{r:notdom2}
In fact the proof below shows that induced modules which fail to satisfy \eqref{e:dom}
are also mapped to induced modules (or zero).  The correspondence of
parameters is given as in Remark \ref{r:notdom}.
\end{remark}

\subsection{Proof of Theorem \ref{t:stdcomp}}
We first prove the theorem on the level of vector spaces (Lemma \ref{l:dim}),
then on the level of $\bbC[W_k]$ modules (Lemma \ref{l:Wstr}), and then locate
an appropriate cyclic vector which allows us to deduce the $\bbH_k$-module
statement.

\begin{lemma}\label{l:dim}
Fix a representative $(P_\R, \delta)$ of $\gamma \in \PR_n$, write $\delta = \delta_1 \boxtimes \cdots
\boxtimes \delta_r$,  and consider $\std_\R(\gamma)$
as in Section \ref{ssec:glclass}.  Recall the level maps of \eqref{e:lev}
and \eqref{e:lev2}.  
Then we have $F_{n,k}(\std_\R(\gamma))\neq 0$ only if
  $\text{lev}(\gamma)\le k$. Moreover, if $\text{lev}(\gamma)=k,$ then
  \[
  \dim F_{n,k}(\std_\R(\gamma))=\frac{k!}{\prod_{i=1}^r\text{lev}(\delta_i)!}.
  \]
Thus, with notation as in Theorem \ref{t:stdcomp},
\[
\dim F_{n,k}\left ( \std_\R(\gamma)\right ) = \dim \std_\bbH\left (\Gamma_{n,k}(\gamma)\right ).
\]
\end{lemma}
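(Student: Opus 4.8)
The plan is to compute $F_{n,k}(\std_\R(\gamma)) = \Hom_{K_\R}(\triv, (\std_\R(\gamma)\otimes\sgn)\otimes V^{\otimes k})$ purely as a vector space, using Frobenius reciprocity for the parabolically induced module together with branching rules for $O(2)$ and $O(1)$. First I would apply the normalized-induction analogue of Frobenius reciprocity: since $\std_\R(\gamma) = \Ind_{P_\R}^{G_\R}\delta$ and $V = \C^n$ restricts to $L_\R = \GL(n_1,\R)\times\cdots\times\GL(n_r,\R)$ as $V_1\oplus\cdots\oplus V_r$ with $V_i = \C^{n_i}$ the standard representation of the $i$-th factor, we get
\[
\Hom_{K_\R}\bigl(\triv, (\std_\R(\gamma)\otimes\sgn)\otimes V^{\otimes k}\bigr) \;\cong\; \bigoplus \Hom_{L_\R\cap K_\R}\bigl(\triv, (\delta\otimes\sgn)\otimes (V_{i_1}\otimes\cdots\otimes V_{i_k})\bigr),
\]
the sum running over all functions $\{1,\dots,k\}\to\{1,\dots,r\}$ assigning each tensor slot to a block; equivalently, over all ways of distributing $k$ ordered slots among the $r$ blocks. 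Because $L_\R\cap K_\R = O(n_1)\times\cdots\times O(n_r)$ acts factor-wise, this Hom space factors as a tensor product over $i$ of $\Hom_{O(n_i)}(\triv, (\delta_i\otimes\sgn)\otimes V_i^{\otimes k_i})$, where $k_i$ is the number of slots assigned to block $i$ and $\sum k_i = k$.

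The heart of the computation is thus the single-block case. For a $\GL(1,\R)$ block, $V_i^{\otimes k_i}$ is one-dimensional and equals (as an $O(1)$-rep) $\sgn^{k_i}$; tensoring with $\delta_i\otimes\sgn = \varepsilon_i\otimes\sgn\otimes|\cdot|^{\nu_i}$ and taking $O(1)$-invariants gives a one-dimensional space precisely when $\varepsilon_i\otimes\sgn\otimes\sgn^{k_i} = \triv$ as $O(1)$-reps, i.e. when $\varepsilon_i = \sgn^{k_i+1}$; since $\lev(\delta_i)=1$ when $\varepsilon_i=\triv$ (forcing $k_i$ odd, so $k_i\geq 1$) and $\lev(\delta_i)=0$ when $\varepsilon_i=\sgn$ (forcing $k_i$ even). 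For a $\GL(2,\R)$ block with $\delta_i = D_{l_i}\otimes|\det|^{\nu_i}$, I use that $D_{l_i}|_{O(2)} = \bigoplus_{j\geq 0} V(l_i+2j)$ and the $O(2)$ fusion rule $V(a)\otimes V(b) = V(|a-b|)\oplus V(a+b)$ stated in the excerpt; tensoring with $\sgn$ and with $V_i^{\otimes k_i}$ (whose $O(2)$-content is $V(1)^{\otimes k_i}$, computable by the same fusion rule, yielding $V(k_i), V(k_i-2),\dots$) and extracting the multiplicity of $\triv$ gives: the dimension of $\Hom_{O(2)}(\triv,(\delta_i\otimes\sgn)\otimes V_i^{\otimes k_i})$ is the number of ways to reach the bottom constituent, which works out to $\binom{k_i}{(k_i - l_i)/2}$ when $k_i\geq l_i$ and $k_i\equiv l_i\pmod 2$, and $0$ otherwise. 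In particular this forces $k_i \geq l_i = \lev(\delta_i)$, and when $k_i = \lev(\delta_i)$ exactly, the dimension is $1$. Summing over blocks: $F_{n,k}(\std_\R(\gamma))\neq 0$ forces $k = \sum k_i \geq \sum\lev(\delta_i) = \lev(\gamma)$, giving the first claim. When $\lev(\gamma)=k$, the constraint $\sum k_i = k = \sum\lev(\delta_i)$ with each $k_i\geq\lev(\delta_i)$ forces $k_i = \lev(\delta_i)$ for every $i$, so only the "balanced" slot-distributions contribute, each contributing a one-dimensional space; the number of such distributions is the multinomial coefficient $\binom{k}{\lev(\delta_1),\dots,\lev(\delta_r)} = k!/\prod_i \lev(\delta_i)!$, giving the dimension formula.

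Finally, to conclude $\dim F_{n,k}(\std_\R(\gamma)) = \dim\std_\bbH(\Gamma_{n,k}(\gamma))$, I compare with the Hecke-algebra side: $\std_\bbH(\Gamma_{n,k}(\gamma)) = \bbH_k\otimes_{\bbH_P}\delta^\bbH$ has dimension $[W_k : W_P] = k!/\prod_j \lev(\delta'_j)!$, where the $\lev(\delta'_j)$ are exactly the nonzero block sizes among the $\lev(\delta_i)$ (the $\GL(2,\R)$ blocks and the $\GL(1,\R)$ blocks with trivial $O(1)$-restriction), since a $\St$ factor on a $\GL(1)$ block and a deleted ($\lev = 0$) block both contribute a factor of $1! = 1$ to the denominator. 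Hence the two denominators agree and the dimensions match. The main obstacle is the single-$\GL(2,\R)$-block branching computation: getting the multiplicity $\binom{k_i}{(k_i-l_i)/2}$ correct requires carefully iterating the $O(2)$ fusion rule across $V(1)^{\otimes k_i}$ and then pairing against all of $D_{l_i}|_{O(2)}$, keeping track of the $\sgn$-twist and the parity constraints; everything else is bookkeeping with Frobenius reciprocity and multinomial coefficients.
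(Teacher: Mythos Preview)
Your approach is essentially the paper's: both reduce via Frobenius reciprocity (and a Mackey-type identification pulling $V^{\otimes k}$ inside the induction) to computing $\Hom_{L_\R\cap K_\R}(\sgn,\delta\otimes V^{\otimes k})$, then expand $V^{\otimes k}$ over the block decomposition $V|_{L_\R\cap K_\R}=\bigoplus V_i$ and use the $O(1)$ and $O(2)$ branching rules. The paper phrases the final count as the coefficient of $X_1^{l_1}\cdots X_t^{l_t}y_1\cdots y_m$ in $(X_1+\cdots+X_t+y_1+\cdots+y_m)^k$, which is exactly your multinomial count of slot-distributions.

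One correction that does not affect the argument: your single-$\GL(2,\R)$-block formula $\binom{k_i}{(k_i-l_i)/2}$ is the multiplicity of $V(l_i)$ alone in $V(1)^{\otimes k_i}$, but $D_{l_i}|_{O(2)}=\bigoplus_{j\geq 0}V(l_i+2j)$ contributes from every higher type as well, so the correct multiplicity is $\sum_{j\geq 0}\binom{k_i}{(k_i-l_i)/2 - j}$ (for example $l_i=2$, $k_i=4$ gives $5$, not $4$). The lemma only uses the qualitative facts that this is zero unless $k_i\geq l_i$ with matching parity, and equals $1$ when $k_i=l_i$; both of those are unaffected, so your proof goes through.
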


\begin{proof} 
We have the following string of isomorphisms:
\begin{equation}
\aligned
F(\std_\R(\gamma))&=
\Hom_{K_\R}\left(\triv_{K_\R},
\Res_{G_\R}^{K_\R}\left(\Ind_{P_\R}^{G_\R}(\delta)\otimes\sgn\otimes V^{\otimes k}\right)\right)\\
&= \Hom_{K_\R}\left(\sgn_{K_\R},
\Res_{G_\R}^{K_\R}\left(\Ind_{P_\R}^{G_\R}(\delta)\otimes V^{\otimes k}\right)\right)\\
&= \Hom_{K_\R}\left(\sgn_{K_\R},
\Res_{G_\R}^{K_\R}\left(\Ind_{P_\R}^{G_\R}
(\delta \otimes V^{\otimes k})\right)\right )&\text{(by a Mackey isomorphism)}\\
&=\Hom_{K_\R}\left (\sgn_{K_\R},\Ind_{K_\R\cap L_\R}^{K_\R}(\delta\otimes V^{\otimes k})\right )
&\text{(restriction to $K_\R$)}\\
&=\Hom_{K_\R\cap L_\R}(\sgn_{K_\R\cap L_\R},\delta\otimes \Res^{K_\R}_{K_\R\cap
  L_\R}V^{\otimes k}) &\text{(Frobenius reciprocity)}\\
&=\Hom_{K_\R\cap L_\R}(\delta^*\otimes \sgn_{K_\R\cap L_\R}, \Res^{K_\R}_{K_\R\cap
  L_\R}V^{\otimes k}) .
\endaligned\label{e:Frob}
\end{equation}
Note that $K_\R\cap L_\R$ is a product of $O(2)$ and $O(1)$ factors. So
we need to understand $V^{\otimes k}$ as an $O(2)^s \times O(1)^{n-2s}$
module.  Recall the notation $V(j)$ for $O(2)$ types from Section \ref{ssec:glclass}.
We have (as a representation of $O(2)^s \times O(1)^{n-2s}$):
\begin{align}
\label{e:stdrep}
V \simeq &(\bigoplus_{i=1}^s \bbC \boxtimes \cdots \boxtimes
\stackrel{ith}{V(1)} \boxtimes \bbC \boxtimes \cdots \boxtimes \bbC) \oplus (\bigoplus_{j=1}^{n-2s} \bbC \boxtimes \cdots \boxtimes
 \stackrel{(j+2s)th}{\sgn} \boxtimes \bbC \boxtimes \cdots \boxtimes \bbC).
\end{align}
Notice that 
\begin{align*}
&V(1)^{\otimes p} = V(p) + \binom{p}{1} V(p-2) + \dots + \binom{p}{p'}V(1),
& \text{if $p = 2p' + 1$ is odd} \\
&V(1)^{\otimes p} = V(p) + \binom{p}{1} V(p-2) + \dots + \frac12\binom{p}{p'}V(0)
&\text{if $p=2p'$ is even.}
\end{align*}
We have (up to permutation of the factors):
\begin{equation*}
 \delta^*\bigr |_{K_\R\cap L_\R} = V(l_1) \boxtimes \cdots \boxtimes V(l_t)
 \boxtimes \triv^{\boxtimes (r-t-m)}\boxtimes \sgn^{\boxtimes m} 
 \; + \; \text{higher terms}
 \end{equation*}
where $\sum_{i=1}^t l_i+m=\lev(\gamma)$ and the higher terms involve larger
$L_\R \cap K_\R$ types. 
Using the rules for the tensor powers of $V(1)$ given above, we
see that the only way  $(\delta^*\otimes\sgn)\bigr |_{K_\R\cap L_\R}$
can appear in $V^{\otimes k}$ is if the $i$th $V(1)$ factor in \eqref{e:stdrep} contributes
$l_i$ times {\em and} there are $m$ distinct contributions of the
$j+2s$ $\sgn$ factors. This implies that if $\lev(\gamma)> k,$ then there can be no overlap between
$\delta^*$ and $\Res^{K_\R}_{K_\R\cap L_\R}V^{\otimes k},$ and therefore $F_{n,k}(\std_\R(\gamma))=0.$

Now for the second part assume that $\lev(\gamma)=k$ exactly. 
Then the dimension of the image $F_{n,k}(\std_\R(\gamma))$ is the coefficient
of 
$X_1^{l_1}\cdots X_t^{l_t}y_1\dots y_m$
in 
$(X_1 + \dots + X_t + y_1 + \dots + y_m)^k$,
namely
$\frac{k!}{l_1! \cdots l_t!},$ as claimed.
\end{proof}
\smallskip

\begin{remark}
The proof did not use the dominance of \eqref{e:dom} anywhere.  So the 
result holds for more general standard modules (not just those with Langlands
quotients).
\end{remark}

\medskip

For calculations below, we
will need to specify a basis
of $F_{n,k}(\gamma)$ for $\gamma$ of level $k$.
As before let $(P_\R,\delta)$ denote a representative of $\gamma$.
We will use the chain of isomorphisms of \eqref{e:Frob}
and specify a basis of $F_{n,k}(\gamma)$ by instead specifying
a basis of
\beq
\label{e:homspace}
\Hom_{K_\R\cap L_\R}(\sgn_{K_\R\cap L_\R},\delta\otimes
V^{\otimes n}).
\eeq 
We need some notation. For $\mathfrak{gl}(2,\C),$ we will need the following basis:
\begin{align}
H=\left(\begin{matrix}0&-i\\i&0\end{matrix}\right),\ E_+=\frac
12\left(\begin{matrix}1&i\\i&-1\end{matrix}\right),\ E_-=\frac 12\left(\begin{matrix}1&-i\\-i&-1\end{matrix}\right),\ Z=\left(\begin{matrix}1&0\\0&1\end{matrix}\right). 
\end{align}   
Then $\{E_+,H,E_-\}$ form a Lie triple, and $Z$ generates the
center. Let $f_\pm$ be the eigenvectors of $H$ on $\C^2$:
$f_+=\left(\begin{matrix}1\\i\end{matrix}\right),$
  $f_-=\left(\begin{matrix}i\\1\end{matrix}\right).$ Then we have
\begin{align*}
&H\cdot f_+=f_+, \qquad E_+\cdot f_+=0,  \quad E_-\cdot f_+=i f_-,\\
&H\cdot f_-=-f_-, \quad E_-\cdot f_-=0, \quad E_+\cdot f_-=-i f_+.
\end{align*} 
Let
$w_{\pm}^{(i)}$ denote highest $\SO(2)$ weight vectors (of respective
weights $\pm l_i$) in the two-dimensional lowest $K$-type of
the relative discrete series
$\delta(l_i,\nu_i)$ of $\GL(2,\R).$   Assume further that $w_+^{(i)}$ and $w_-^{(i)}$
are interchanged by the element $\left ( \begin{matrix}0 & 1\\ 1 & 0\end{matrix}
\right )$ in $O(2)$.
These vectors satisfy 
\begin{align*}
E_+w^{(i)}_-=0,\quad E_-w^{(i)}_+=0,\quad  Hw^{(i)}_\pm=\pm l_i w^{(i)}_\pm.
\end{align*}
We need to embed these kinds of $\frg\frl(2,\C)$ considerations inside $\frg\frl(n,\C)$,
and this involves some extra notation. 

Fix $(P_\R, \delta)$ of level $k$ and, as usual,
let $\delta$ and $\delta_i$ denote the representation
spaces of the relevant relative discrete series.  For each $i$ such that $n_i = 1$,  fix nonzero vectors in $x_\pm^{(i)} \in \delta_i$;
for each $i$ such that $n_i = 2$,  set $x_\pm^{(i)} = w_\pm^{(i)}$.
Set  
\[
x_\pm=x_\pm^{(1)}\boxtimes\dots \boxtimes x_\pm^{(r)}\in\delta.
\]
Next let $e_1, \dots, e_n$ denote the basis of $V$ we have been
implicitly invoking.  For an index $i$ such that $n_i =1$, 
let $e_\pos{i}$ 
denote the basis element corresponding to the position of $i$th
component of the Levi factor of $P_\R$.  For an index $i$ such that
$n_i = 2$, let $e_{\pos{i}-1}, e_{\pos{i} }$ denote the pair of basis
elements corresponding to the position of the $i$th component of the
Levi factor of $P_\R$. 
For an index $i$ such that
$n_i = 2$ and $z = \left ( \begin{matrix}a\\b\end{matrix}\right) \in
\bbC^2$
let $z^{(\pos{i})}$ denote $ae_{\pos{i}-1} + be_{\pos{i}}$ in $V$.
Finally for an element $X$ of $\frg\frl(2,\C)$
and an index $i$ such that $n_i = 2$, let $X^\pos{i}$ denote the image
of $X$ in $\frg\frl(n,\C)$ under the inclusion of $\frg\frl(2,\C)$ into the $i$th component
complexified Lie algebra of the Levi factor of $P_\R$.   
Set
\[
u_\mp^{(i)}=\left\{\begin{aligned}&\underbrace{f_\mp^{(\text{pos}(i))}\otimes\dots\otimes
  f_\mp^{(\text{pos}(i))}}_{l_i}, \ \text{ if } n_i=2,\\
  &u_\mp^{(i)}=e_{\text{pos}(i)},\ \text{ if } n_i=1\text{ and }
  \delta_i=\delta(\triv,\nu_i)\end{aligned} \right.\\
\]
and
\[
u_\mp= \bigotimes_i u_\mp^{(i)} \in V^{\otimes k},
\]
where the tensor is over indices $i$ such that $n_i =2$ or $n_i =1$ and $\delta_i$ is trivial when
restricted to $O(1)$.  (This tensor is in $V^{\otimes k}$ since we have assumed $\gamma$ to 
be of level $k$.)
Finally set
\beq
\label{eigvec}
v_\gamma=x_+\otimes u_-+x_-\otimes u_+\in(\delta\otimes V^{\otimes k}).
\eeq
Then the basis of the space in \eqref{e:homspace}
is formed of the orbit of $v_\gamma$ under the $W_k$ action permuting
the $V^{\otimes k}$ factors of $v_\gamma$.
Clearly the stabilizer of $v_\gamma$ is $\prod_{i=1}W_{\lev(\delta_i)}$.
Together with the definition of the standard $\bbH$ modules, we obtain the following
lemma.

\begin{lemma}\label{l:Wstr}
In the setting of Theorem \ref{t:stdcomp}, we
have  
\[
F_{n,k}\left ( \std_\R(\gamma)\right ) = \std_\bbH\left (\Gamma_{n,k}(\gamma)\right )
\]
as modules for the group algebra $\bbC[W_k]$.
\end{lemma}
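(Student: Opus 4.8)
The plan is to prove the stated identity first at the level of $\bbC[W_k]$-modules by transporting the action of $\Theta$ (restricted to $\bbC[W_k]$) across the chain of natural isomorphisms \eqref{e:Frob} used in the proof of Lemma \ref{l:dim}, and then to match the resulting module with $\std_\bbH(\Gamma_{n,k}(\gamma))$ via the explicit vector $v_\gamma$ of \eqref{eigvec}. In outline: identify $\Theta|_{\bbC[W_k]}$ as the sign-twisted permutation action on $V^{\otimes k}$, note it is carried intact along \eqref{e:Frob}, read off $\Ind_{W_P}^{W_k}(\sgn)$ from the orbit of $v_\gamma$, and observe the same module appears on the Hecke side because Steinberg restricts to the sign character of the Weyl group.

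First I would pin down $\Theta(w)$ concretely. Each $\Theta(s_i) = -\Omega_{i,i+1}$ involves only the $i$-th and $(i+1)$-th tensor slots of $V^{\otimes k}$, and on those two slots $\Omega_{i,i+1} = \sum_{E\in B}(E)_i\otimes (E^*)_{i+1}$ is the flip $v\otimes w\mapsto w\otimes v$ (take $B=\{E_{ab}\}$ to be the matrix units, so $E_{ab}^* = E_{ba}$ and $\sum_{a,b}E_{ab}\otimes E_{ba}$ is exactly the flip). Hence, as an operator on $(X\otimes\sgn)\otimes V^{\otimes k}$, $\Theta(s_i)$ is $(-1)$ times the transposition of the $i$-th and $(i+1)$-th copies of $V$, and more generally $\Theta(w) = \sgn(w)\,\pi(w)$, where $\pi(w)$ permutes the factors of $V^{\otimes k}$. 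In particular $\Theta|_{\bbC[W_k]}$ commutes with the diagonal $G_\R$-action, hence with $K_\R$ and with $K_\R\cap L_\R$, so it descends to $\Hom_{K_\R}$ and $\Hom_{K_\R\cap L_\R}$. Since every step of \eqref{e:Frob} — the tensor identity $\Ind_{P_\R}^{G_\R}(\delta)\otimes V^{\otimes k}\cong \Ind_{P_\R}^{G_\R}(\delta\otimes V^{\otimes k})$, restriction to $K_\R$, and Frobenius reciprocity — is functorial in the variable $V^{\otimes k}$ and leaves its tensor slots untouched, the whole chain is $\bbC[W_k]$-equivariant. Therefore $F_{n,k}(\std_\R(\gamma))$ is isomorphic, as a $\bbC[W_k]$-module, to the space \eqref{e:homspace} with $w$ acting by $\sgn(w)\,\pi(w)$ on the $V^{\otimes k}$-factor.

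Next I would read off this module from the basis already exhibited for \eqref{e:homspace}, the orbit of $v_\gamma$. Within $u_\pm$ each Levi block $i$ contributes a string of $\lev(\delta_i)$ identical $V$-slots, so the subgroup $W_P := \prod_i W_{\lev(\delta_i)}$ — which coincides with the Weyl group $W_{\lev(\delta_1')}\times\cdots\times W_{\lev(\delta_{m+t}')}$ of the parabolic $\bbH_P$ attached to $\Gamma_{n,k}(\gamma)$, since $W_0$ is trivial — fixes $v_\gamma$ under the \emph{untwisted} permutation action $\pi$, while distinct cosets of $W_P$ send $v_\gamma$ to linearly independent vectors; by Lemma \ref{l:dim} there are $k!/\prod_i\lev(\delta_i)! = [W_k:W_P]$ of these, so $\{\pi(w)v_\gamma : wW_P\in W_k/W_P\}$ is a basis. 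Now $w'\cdot(\pi(w)v_\gamma) = \Theta(w')\pi(w)v_\gamma = \sgn(w')\,\pi(w'w)v_\gamma$, so identifying $\pi(w)v_\gamma$ with the coset $wW_P$ exhibits
\[
F_{n,k}(\std_\R(\gamma)) \;\cong\; \sgn\otimes\bbC[W_k/W_P] \;=\; \Ind_{W_P}^{W_k}\!\bigl(\sgn|_{W_P}\bigr)
\]
as $\bbC[W_k]$-modules.

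Finally I would compute the other side: restricting $\std_\bbH(\Gamma_{n,k}(\gamma)) = \bbH_k\otimes_{\bbH_P}\delta^\bbH$ to $\bbC[W_k]$ gives $\bbC[W_k]\otimes_{\bbC[W_P]}(\delta^\bbH|_{\bbC[W_P]})$, using the standard fact that $\bbH_k$ is free as a right $\bbH_P$-module on a set of minimal-length coset representatives for $W_k/W_P$, which is simultaneously a $\bbC[W_P]$-basis of $\bbC[W_k]$. Each factor $\delta^\bbH_i = \St\otimes\bbC_{\nu_i'}$ restricts on its Weyl group to the sign representation, so $\delta^\bbH|_{\bbC[W_P]} = \sgn|_{W_P}$ and $\std_\bbH(\Gamma_{n,k}(\gamma))|_{\bbC[W_k]} = \Ind_{W_P}^{W_k}(\sgn|_{W_P})$, matching the display above. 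I expect the delicate point to be the bookkeeping in the first two paragraphs: verifying that the action transported across \eqref{e:Frob} is exactly the \emph{sign}-twisted permutation action, so that the sign in $\Theta(s_i) = -\Omega_{i,i+1}$ lands precisely on the ``induced from $\sgn$'' side and the two $\bbC[W_k]$-modules agree on the nose rather than up to a sign twist, together with checking that $v_\gamma$ really is a weight vector for the correct $K_\R\cap L_\R$-character with $\pi$-stabilizer exactly $W_P$. The comparison on the Weyl-group side is then purely formal, via $\St|_{\bbC[W]} = \sgn$ and the behaviour of parabolic induction for graded Hecke algebras under restriction to the Weyl group.
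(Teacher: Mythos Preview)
Your proof is correct and follows essentially the same approach as the paper: the paper's argument is the brief paragraph immediately preceding the lemma (noting that the basis of \eqref{e:homspace} is the $W_k$-orbit of $v_\gamma$ with stabilizer $\prod_i W_{\lev(\delta_i)}$, and invoking the definition of standard $\bbH$-modules), together with the earlier observation that $\Omega_{i,j}$ permutes tensor factors. You have simply written out explicitly the steps the paper leaves to the reader --- the sign-twist in $\Theta(s_i)=-\Omega_{i,i+1}$, the $W_k$-equivariance of the chain \eqref{e:Frob}, and the identification of both sides with $\Ind_{W_P}^{W_k}(\sgn)$.
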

\qed

We now turn to the proof of Hecke algebra action in Theorem \ref{t:stdcomp}. 
 Assume $\gamma$ has
level $k$, so $\Gamma_{n,k}(\gamma) \neq 0$ by Lemma \ref{l:dim}.
From the definition of standard modules given in Section \ref{ssec:hclass},
the $\bbH_k$-module $\std_\bbH(\Gamma_{n,k}(\gamma))$ 
is generated under $W_k$ by a cyclic vector
$\mathsf v$ which transforms like the sign representation of
$W(\frl_\lev(\gamma)),$ and which is a common eigenvector for all generators
$\epsilon_i$,
\begin{equation}
\label{e:eigval}
\epsilon_i\cdot \mathsf v =\bigr \langle\epsilon_i,\chi(\Gamma_{n,k}(\gamma))\bigr\rangle.
\end{equation}
where $\chi(\Gamma_{n,k}(\gamma))$ denotes the central character of $\std_\bbH(\Gamma_{n,k}(\gamma))$
(as in \eqref{e:cc}).
In light of Lemma \ref{l:Wstr}, it is sufficient to find an element
of $F_{n,k}(\std_\R(\gamma))$ which transforms like the sign representation
of  $W(\frl_\lev(\gamma))$ and which is a common eigenvector for $\Theta(\epsilon_i)$ with the same
eigenvalues as in \eqref{e:eigval}.

The correct cyclic vector in $F_{n,k}(\std_\R(\gamma))$ 
is the one corresponding to $v_\gamma$ (from \eqref{eigvec})
under the chain of isomorphisms in \eqref{e:Frob}.  In fact, since
$F_{n,k}(\std_\R(\gamma))$ is isomorphic to the $K_\R$ 
invariants (for the diagonal action) in
\begin{align}
\nonumber
\Ind_{P_\R}^{G_\R}(\delta \otimes \sgn) \otimes V^{\otimes k} 
&\simeq 
\Ind_{P_\R}^{G_\R}(\delta \otimes \sgn) \otimes
\Ind_{G_\R}^{G_\R}(V)^{\otimes k} \\ \label{e:kfold}
&
\simeq
\Ind_{P_\R \times G_\R \times \cdots \times G_\R }^{G^{k+1}_\R}\left ( (\delta \otimes \sgn) \otimes
V \otimes \cdots \otimes V \right ), 
\end{align}
we will find it convenient to specify $f_\gamma$ as a $K_\R$ invariant element of the latter space,
i.e.~ as a function on the $(k+1)$-fold product $G_\R^{k+1}$.
The advantage of this point of view is that the left-translation
action of $G_\R^{k+1}$
is transparent, and that will of course be helpful below.
To unwind the definition of $f_\gamma$ from $v_\gamma$,
first consider the induced representation
\[
\Ind_{P_\R}^{G_\R}(\delta \otimes \sgn \otimes V^{\otimes k})
\]
of $\delta \otimes V^{\otimes k}$-valued functions on $G_\R$.
Using the decomposition $G_\R=K_\R P_\R=K_\R M_\R A_\R N_\R$,
set
\[
f^\Delta_\gamma(kman) = (man)^{-1}\cdot v_\gamma;
\]
the action of $(man)^{-1}$ is the diagonal one.  By construction, this is well-defined and $K_\R$
invariant.  Finally define the function $f_\gamma$ in the space of \eqref{e:kfold}
as
\beq\label{fgamma}
f_\gamma(x_0,x_1,\dots, x_n) = \pi_1(x_1^{-1}x_0)\cdots\pi_n(x_n^{-1}x_0)f^\Delta_\gamma(x_0);
\eeq
here $\pi_i(g)$ denote the action of $g \in G_\R$ in
the $(i +1)$st factor of the tensor product $\delta \otimes V^{\otimes k}$.

Then $f_\gamma$ is well-defined and $K_\R$ invariant.  It is completely characterized by
the value of $f^\Delta_\gamma$ at the identity $\mathsf 1$, i.e.~$v_\gamma$.
It is clear that $f_\gamma$ transforms like the sign representation 
for $\Theta(s)$ with $s\in W(\frl_\lev(\gamma))$ (since $v_\gamma$ does).
So it remains to compute the action of
$\Theta(\epsilon_\ell)=\sum_{0\le l\le \ell}\Omega_{l,\ell}$ on $f_\gamma$
and see that it is a simultaneously eigenvector with eigenvalues
as in \eqref{e:eigval}. 

\medskip

We choose, as we may, a
convenient basis for $\frg=\mathfrak{gl}(n)$ and define $\Omega_{0,\ell}$ with respect to
it. Recall the notation $\text{pos}(i)$ introduced before (\ref{eigvec}).  Write $\{E_{i,j}\}_{1\leq i,j\leq n}$ for
the usual basis of $\frg$.  The basis we choose consists of the following elements:

\begin{enumerate}
\item[$\bullet$] 
\begin{equation}\label{gl2basis}\{\frac 1{\sqrt 2} E_+^{(\text{pos}(i))},\frac 1{\sqrt 2} E_-^{(\text{pos}(i))},\frac 1{\sqrt 2}H^{(\text{pos}(i))},\frac
1{\sqrt 2} Z^{(\text{pos}(i))}\},\end{equation} {
for indices $i$ such that $n_i=2$;

\item[$\bullet$]
{
$E_{\pos{i},\pos{i}}$ for indices $i$ such that $n_i =1$; and}}

\item[$\bullet$] $E_{i,j}\in\frn\oplus\overline\frn,$ where $\frn$
  denotes the complexified Lie algebra of $N_\R$, and $\overline\frn$ denoted the
  opposite nilradical.
\end{enumerate}

Recall the notation $\pi_i(~)$ for the action of $g\in G_\R$ in the
$(i+1)$st factor of $\delta\otimes V^{\otimes k}.$ We use the same
notation for the corresponding action of an element $E\in\frg.$

The calculation is divided into three parts, depending if the element
$E\in \frg$ in the term $(E)_0\otimes (E^*)_\ell$ of $\Omega_{0,\ell}$ belongs
to $\frn,$ $\overline\frn,$ or $\frl.$

\begin{lemma}\label{l:nil} Assume that $E,F$ are elements of $\frp=\frl+\frn.$ Then, we have 
\begin{equation}
[((E)_0\otimes
  (F)_\ell)f_\gamma](\mathsf 1)=\pi_0(E)\pi_\ell(F)\:v_\gamma.
\end{equation}
 In
  particular, if $E\in \frn,$ then  $((E)_0\otimes
  (F)_\ell)f_\gamma=0.$

\end{lemma}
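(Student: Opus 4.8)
The plan is to work entirely inside the realization of $f_\gamma$ as a $(\delta\otimes\sgn)\otimes V^{\otimes k}$-valued function on $G_\R^{k+1}$ furnished by \eqref{fgamma}, and to exploit the fact that $(E)_0$ only moves the variable $x_0$ while $(F)_\ell$ only moves $x_\ell$. First I record the structural feature of $f_\gamma$ that does the work. Writing $x_0=kman$ and using $f^\Delta_\gamma(kman)=(man)^{-1}\cdot v_\gamma$ together with \eqref{fgamma}, a direct check (the modulus twist of normalized induction being absorbed into $\pi_0$) gives, for every $p\in P_\R$,
\[
f_\gamma(x_0 p,x_1,\dots,x_k)=\pi_0(p)^{-1}\,f_\gamma(x_0,x_1,\dots,x_k),
\]
where $\pi_0$ is the action of $P_\R$ on the inducing representation $\delta\otimes\sgn$ on the $0$th tensor slot; since $\delta\otimes\sgn$ is extended trivially across $N_\R$ and the normalizing character is trivial on $N_\R$, the derived operator $\pi_0(X)$ vanishes whenever $X\in\frn$. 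Also $f_\gamma(\mathsf 1,\dots,\mathsf 1)=v_\gamma$. Recall that in this picture $(E)_0$ is the derived action of $\frg$ on the $0$th (Harish--Chandra/induced) factor, a differential operator in $x_0$ alone, and $(F)_\ell$ is the derived $V$-action on the $\ell$th $V$-factor, a differential operator in $x_\ell$ alone.

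Next I would show that $g:=(F)_\ell f_\gamma$ still satisfies the same right $P_\R$-equivariance in the $x_0$-slot, $g(x_0 p,x_1,\dots,x_k)=\pi_0(p)^{-1}g(x_0,\dots,x_k)$. This is because $(F)_\ell$ involves only $x_\ell$ and acts on a tensor factor distinct from the $0$th, so it commutes both with the operation of right-translating $x_0$ by $p$ and with the pointwise constant operator $\pi_0(p)^{-1}$; hence the displayed identity for $f_\gamma$ is preserved by applying $(F)_\ell$. A one-line computation from \eqref{fgamma} — the only $x_\ell$-dependence sits in the factor $\pi_\ell(x_\ell^{-1}x_0)$ — then gives $g(\mathsf 1,\dots,\mathsf 1)=\pi_\ell(F)v_\gamma$.

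For the conclusion: since $E\in\frp$ we have $\exp(-tE)\in P_\R$ for small $t$, so $(\exp(-tE),\mathsf 1,\dots,\mathsf 1)$ is the right $P_\R$-translate of the base point in the $x_0$-slot; applying the equivariance of $g$ and differentiating,
\[
[((E)_0\otimes(F)_\ell)f_\gamma](\mathsf 1)=(E)_0 g\,(\mathsf 1)=\tfrac{d}{dt}\Big|_0\pi_0(\exp(-tE))^{-1}g(\mathsf 1,\dots,\mathsf 1)=\pi_0(E)\,\pi_\ell(F)\,v_\gamma,
\]
using $g(\mathsf 1,\dots,\mathsf 1)=\pi_\ell(F)v_\gamma$ and that $\pi_0(E)$, $\pi_\ell(F)$ act on distinct tensor factors (so they commute, and the sign matches \eqref{theta}). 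When $E\in\frn$ the right-hand side is $0$ because $\pi_0$ kills $\frn$, which is the ``in particular'' clause (it is the vanishing of this evaluation, all that is needed downstream, that matters). The bookkeeping here is mild; the only things to be careful about are the normalization hidden in $\pi_0$ and the verification that $(F)_\ell$ does not disturb the $x_0$-equivariance. Alternatively one can prove the formula by differentiating \eqref{fgamma} head-on, in which case the point is a cancellation: the terms where $\tfrac{d}{dt}$ lands on some $\pi_i(x_i^{-1}\exp(-tE)x_0)$ with $i\ge 1$ exactly cancel the $\sum_{i\ge1}\pi_i(E)$ produced by differentiating $f^\Delta_\gamma$, and the $[E,F]$-term from differentiating the adjoint twist cancels the commutator defect $[\pi_\ell(E),\pi_\ell(F)]$, leaving $\pi_0(E)\pi_\ell(F)v_\gamma$. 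The genuinely harder part of the overall computation is the companion case $E\in\ol{\frn}$, where $\exp(-tE)\notin P_\R$ and this shortcut is unavailable, so that case must be treated separately.
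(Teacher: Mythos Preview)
Your argument is correct and is essentially the paper's proof, just organized slightly differently: the paper carries out exactly the ``direct differentiation'' you mention as an alternative, substituting $x_0=e^{-uE}$, $x_\ell=e^{-sF}$ into \eqref{fgamma} and using $f^\Delta_\gamma(e^{-uE})=e^{uE}\cdot v_\gamma$ for $E\in\frp$ to collapse everything to $\pi_\ell(e^{sF})\pi_0(e^{uE})v_\gamma$ before taking $\partial_u\partial_s$. Your equivariance formulation $f_\gamma(x_0p,\dots)=\pi_0(p)^{-1}f_\gamma(x_0,\dots)$ is precisely the same fact, so the two proofs coincide in substance.
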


\begin{proof}
From (\ref{fgamma}), we have:
\begin{equation}
\aligned
((E)_0\otimes
  (F)_\ell)f_\gamma(\mathsf 1)&
=\left.{\dfrac
    {d^2}{du\:ds}}\right\vert_{u=s=0} \prod_{{l=1,l\neq
        \ell}}^k\pi_l(e^{-uE})\pi_\ell(e^{sF}e^{-uE})
      f^\triangle(e^{-uE})\\
&=\left.{\dfrac
    {d^2}{du\:ds}}\right\vert_{u=s=0}~ \pi_\ell(e^{sF})
      \pi_0(e^{uE}) ~v_\gamma=\pi_0(E)\pi_\ell(F)~v_\gamma,
\endaligned\label{eq:diff}
\end{equation}
where we have used that
$f^\triangle(e^{-uE})=[\prod_{l=1}^k\pi_l(e^{uE})] f^\triangle(\mathsf
1),$ since $e^{-uE}\in N_\R.$

For the second claim, it is sufficient to notice that
if $E\in\frn,$ then $\pi_0(e^{uE})v_\gamma=v_\gamma.$
\end{proof}

Note that, in particular, Lemma \ref{l:nil} implies that  
\begin{equation}\label{eq:nil}
[\sum_{E\in\frn} (E)_0\otimes (E^*)_\ell]f_\gamma=0. 
\end{equation}
 To compute the action of the terms
$(E_{i,j})_0\otimes (E_{j,i})_\ell$ with $E_{i,j}\in \overline\frn,$
we will find the following calculation useful.

\begin{lemma}\label{l:practical} Assume that $E_{i,j}\in\overline\frn$. Then 
\begin{equation}\label{eq:practical}
[((E_{i,j})_0\otimes (E_{j,i})_\ell)f_\gamma](\mathsf
1)=[(-\pi_\ell(E_{j,j})+\pi_\ell(E_{j,i})\sum_{{l=1, l\neq \ell}}^k\pi_l(-E_{i,j}+E_{j,i}))]\:v_\gamma.
\end{equation}

\end{lemma}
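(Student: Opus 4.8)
The plan is to reduce the statement to a two‑variable differentiation and carry it out, exactly in the spirit of the proof of Lemma~\ref{l:nil}. In the realization \eqref{e:kfold}, the operators $(E)_0$ and $(F)_\ell$ act on $f_\gamma$ by infinitesimal left translation in the $0$th and $\ell$th copies of $G_\R$; so, using \eqref{fgamma},
\[
[((E_{i,j})_0\otimes (E_{j,i})_\ell)f_\gamma](\mathsf 1) = \left.\frac{d^2}{du\,ds}\right|_{u=s=0} \Bigl[\prod_{l=1,\,l\neq\ell}^k \pi_l(e^{-uE_{i,j}})\Bigr]\,\pi_\ell(e^{sE_{j,i}}e^{-uE_{i,j}})\,f^\triangle_\gamma(e^{-uE_{i,j}}),
\]
just as in that proof. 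Carrying out $\partial_s|_{s=0}$ first brings a factor $\pi_\ell(E_{j,i})$ to the front of the $\ell$th slot, leaving $\partial_u|_{u=0}$ of $\bigl[\prod_{l\neq\ell}\pi_l(e^{-uE_{i,j}})\bigr]\pi_\ell(E_{j,i})\pi_\ell(e^{-uE_{i,j}})f^\triangle_\gamma(e^{-uE_{i,j}})$.

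Next I would apply the product rule in $u$; since operators attached to distinct tensor slots commute, the derivatives landing on the factors $\pi_l(e^{-uE_{i,j}})$ with $l\neq\ell$, resp.\ on the inner $\pi_\ell(e^{-uE_{i,j}})$, contribute
\[
\pi_\ell(E_{j,i})\Bigl[\textstyle\sum_{l=1,\,l\neq\ell}^k \pi_l(-E_{i,j})\Bigr]v_\gamma \;+\; \pi_\ell(E_{j,i})\pi_\ell(-E_{i,j})\,v_\gamma,
\]
and there remains the single term $\pi_\ell(E_{j,i})\cdot\frac{d}{du}\big|_{0} f^\triangle_\gamma(e^{-uE_{i,j}})$.

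The crux is the evaluation of $\frac{d}{du}|_0 f^\triangle_\gamma(e^{-uE_{i,j}})$; here, unlike in Lemma~\ref{l:nil}, we cannot invoke $N_\R$‑invariance, and this is the only place the hypothesis $E_{i,j}\in\overline{\frn}$ is used. Since $E_{j,i}-E_{i,j}$ is skew‑symmetric (hence in $\frk$) and $E_{j,i}\in\frn$ (because $(i,j)$ lies in the block lower‑triangular region), the first‑order factorization of $G_\R=K_\R L_\R N_\R$ gives $e^{-uE_{i,j}}=k(u)p(u)$ with $k(u)\in K_\R$ and $p(u)=e^{-uE_{j,i}}+O(u^2)\in L_\R N_\R$; the defining property $f^\triangle_\gamma(kp)=p^{-1}\cdot v_\gamma$ then yields $f^\triangle_\gamma(e^{-uE_{i,j}})=\bigl[\prod_{m=0}^k\pi_m(e^{uE_{j,i}})\bigr]v_\gamma+O(u^2)$, so
\[
\frac{d}{du}\Big|_{0} f^\triangle_\gamma(e^{-uE_{i,j}}) \;=\; \sum_{m=0}^k \pi_m(E_{j,i})\,v_\gamma \;=\; \sum_{m=1}^k \pi_m(E_{j,i})\,v_\gamma ,
\]
the $m=0$ summand vanishing since $\frn$ acts by $0$ on the $\delta\otimes\sgn$ factor on which $v_\gamma$ sits and $E_{j,i}\in\frn$. (I would note that this factorization is only determined modulo $\frk\cap\frl$, but any ambiguity $Z\in\frk\cap\frl$ acts on $v_\gamma$ by the scalar $\tr(Z)=0$, so the displayed formula is unambiguous.)

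Finally I would assemble the three contributions. Substituting the last display and regrouping, the answer becomes
\[
\pi_\ell(E_{j,i})\Bigl[\textstyle\sum_{l=1,\,l\neq\ell}^k\pi_l(-E_{i,j}+E_{j,i})\Bigr]v_\gamma \;+\; \pi_\ell(E_{j,i})\bigl(-\pi_\ell(E_{i,j})+\pi_\ell(E_{j,i})\bigr)v_\gamma .
\]
On the standard module $V=\C^n$ one has the matrix identities $\pi_\ell(E_{j,i})\pi_\ell(E_{i,j})=\pi_\ell(E_{j,j})$ and $\pi_\ell(E_{j,i})\pi_\ell(E_{j,i})=0$ (the latter since $i\neq j$), so the second summand collapses to $-\pi_\ell(E_{j,j})v_\gamma$, giving precisely \eqref{eq:practical}. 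I expect the bookkeeping in the differentiation of $f^\triangle_\gamma$ — making the first‑order $K_\R L_\R N_\R$ factorization precise, verifying that the $\frk\cap\frl$‑ambiguity is harmless, and keeping track of which tensor slot each resulting operator acts on — to be the only genuinely delicate point; everything else is a routine application of the product rule.
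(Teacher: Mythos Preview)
Your argument is correct and rests on the same underlying decomposition $E_{i,j}=(E_{i,j}-E_{j,i})+E_{j,i}\in\frk+\frn$, but you deploy it at a different stage than the paper. The paper first invokes Lemma~\ref{l:nil} (with $E=E_{j,i}\in\frn$) to see that $(E_{j,i})_0\otimes(E_{j,i})_\ell$ kills $f_\gamma$, so that $(E_{i,j})_0\otimes(E_{j,i})_\ell$ may be replaced by $(H_{i,j})_0\otimes(E_{j,i})_\ell$ with $H_{i,j}=E_{i,j}-E_{j,i}\in\frk$; then $f^\triangle_\gamma(e^{-uH_{i,j}})=v_\gamma$ is constant by $K_\R$-invariance and the chain rule finishes immediately. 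You instead keep $(E_{i,j})_0$ and perform the $\frk+\frn$ splitting inside the first-order $K_\R P_\R$-factorization of $e^{-uE_{i,j}}$, then simplify the extra terms using the matrix identities $E_{j,i}E_{i,j}=E_{j,j}$ and $E_{j,i}^2=0$ on $V$. The paper's route is shorter (no factorization, no matrix identities); yours is more self-contained in that it does not appeal back to Lemma~\ref{l:nil}. One small correction: your parenthetical about the $\frk\cap\frl$-ambiguity acting by $\tr(Z)=0$ is not the right justification---the point is simply that $f^\triangle_\gamma$ is a well-defined function on $G_\R$, so its $u$-derivative is unambiguous regardless of how one writes the $K_\R M_\R A_\R N_\R$ factorization.
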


\begin{proof}
Since we have $E_{j,i}\in \frn,$ from Lemma \ref{l:nil}, it follows
that $(E_{j,i})_0\otimes (E_{i,j})=0$ on $f_\gamma.$  Therefore, if we
set $H_{i,j}=E_{i,j}-E_{j,i},$ we see that 
$(E_{i,j})_0\otimes (E_{j,i})_\ell=(H_{i,j})_0\otimes
(E_{j,i})_\ell.$  Since $H_{i,j}\in\frk,$ the (equivalent) operator $(H_{i,j})_0\otimes
(E_{j,i})_\ell$
is easier to compute. By a computation similar to (\ref{eq:diff}), we find that 
\begin{equation*}
((H_{i,j})_0\otimes (E_{j,i})_\ell) f_\gamma(\mathsf 1)=\left.{\frac
    {d^2}{du\:ds}}\right\vert_{u=s=0} \prod_{{l=1,l\neq
        \ell}}^k\pi_l(e^{-uH_{i,j}})\pi_\ell(e^{sE_{j,i}}e^{-uH_{i,j}})f^\triangle_\delta(e^{-uH_{i,j}}).
\end{equation*}
Notice that $f^\triangle_\delta(e^{-uH_{i,j}})=v_\gamma$, since
$e^{-uH_{i,j}}\in K_\R.$ Then the claim follows by applying the chain rule. 
\end{proof}

Before computing the action of the terms $\sum_{E_{i,j}\in\overline
  \frn} (E_{i,j})_0\otimes (E_{j,i})_\ell$ in $\Omega_{0,\ell}$ on
$f_\gamma,$ we need to establish more notation.

\begin{notation}\label{phi}For every $1\le\ell\le k,$ set $\phi(\ell)=i$, if the
  $\ell$-th factor of the tensor product $u_\mp$ from (\ref{eigvec})
  is  $f_\mp^{(\pos i)}$ or $e_{\pos i}$. Set also
  $\text{prec}(\ell)=\sum_{j=1}^{i-1}|u_\mp^{(j)}|,$ where
  $|u_\mp^{(j)}|$ is the (tensor) length of $u_\mp^{(j)}$.  Notice
  that 
  $\text{prec}(\ell)\leq \ell-1,$ and 
  if
  $n_{\phi(\ell)}=1,$ then $\text{prec}(\ell)=\ell-1$. 
\end{notation}

\begin{lemma}\label{l:nilbar} With the notation as in \ref{phi}, the action of $\displaystyle{\sum_{E_{i,j}\in\overline\frn}
  (E_{i,j})_0\otimes (E_{j,i})_\ell}$ on $f_\gamma$ equals:
\begin{equation*}
[{-\sum_{l=1}^{\text{prec}(\ell)}\Omega_{l,\ell}-n+p]~\Id},\quad\text{where
} p=\text{pos}(\phi(\ell)).
\end{equation*}
\end{lemma}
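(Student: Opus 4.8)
Our strategy is to apply the single-term identity of Lemma~\ref{l:practical} to each $E_{i,j}\in\ol\frn$, sum, and then organize the resulting double sum using the rigid support structure of $v_\gamma$ in $\delta\otimes V^{\otimes k}$. Recall from Notation~\ref{phi} that the $\ell$-th tensor factor of $u_\mp$ involves only the coordinates of the block $\phi(\ell)$ of the Levi of $P_\R$, and that $\text{prec}(\ell)$ counts exactly the factors of $u_\mp$ lying in blocks strictly before $\phi(\ell)$. The first step is therefore to record, for each term $\pi_\ell(E_{j,j})$ or $\pi_\ell(E_{j,i})\pi_l(-E_{i,j}+E_{j,i})$ produced by the summation, which pairs $(i,j)$ (and, in the second case, which $l$) can make it act nontrivially on $v_\gamma$; as in the passage from Lemma~\ref{l:nil} to \eqref{eq:nil}, it will be enough to work with the value at $\mathsf 1$.

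For the diagonal terms $-\sum_{E_{i,j}\in\ol\frn}\pi_\ell(E_{j,j})v_\gamma$: the matrix $E_{j,j}$ annihilates the $\ell$-th factor of $u_\mp$ unless $j$ is one of the one or two coordinates of block $\phi(\ell)$, and for such $j$ the requirement $E_{i,j}\in\ol\frn$ forces $i$ into a strictly later block, of which there are $n-\text{pos}(\phi(\ell))=n-p$ coordinates. Since $\sum_{j\text{ in block }\phi(\ell)}E_{j,j}$ restricts to the identity on the span of the coordinates of that block, and the $\ell$-th factor of $u_\mp$ lies in that span, this portion contributes exactly $-(n-p)\,\Id=(-n+p)\,\Id$.

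For the off-diagonal terms I would fix $l\neq\ell$ and examine $\sum_{E_{i,j}\in\ol\frn}\pi_\ell(E_{j,i})\pi_l(-E_{i,j}+E_{j,i})v_\gamma$. For $\pi_\ell(E_{j,i})$ to act nontrivially, $i$ must lie in block $\phi(\ell)$; then $E_{i,j}\in\ol\frn$ puts $j$ in an earlier block, and $\pi_l(-E_{i,j}+E_{j,i})$ can touch the $l$-th factor of $u_\mp$ only when that factor involves $i$ or $j$, i.e.\ only when $\phi(l)=\phi(\ell)$ (Case A) or $\phi(l)$ equals the block of $j$ (Case B). In Case A we have $l\neq\ell$ both inside block $\phi(\ell)$ (so $n_{\phi(\ell)}=2$), the $E_{i,j}$-summand drops out for support reasons, and we are left with $\sum_i\pi_\ell(E_{j,i})\pi_l(E_{j,i})$ acting on $f_\mp^{(p)}\otimes f_\mp^{(p)}$ in factors $\ell$ and $l$; since $E_{j,i}f_\mp^{(p)}$ is the $i$-th coordinate of $f_\mp^{(p)}$ times $e_j$, this equals $\bigl(\sum_i (f_\mp^{(p)})_i^2\bigr)\,e_j\otimes e_j=0$, because $f_\pm$ are isotropic for the form $\sum_m z_m^2$ on $\C^2$. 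So Case A contributes nothing. In Case B (say $\phi(l)=b<\phi(\ell)$) the summand $\pi_l(E_{j,i})$ now drops out, and summing $-\pi_\ell(E_{j,i})\pi_l(E_{i,j})$ over $i$ in block $\phi(\ell)$ and $j$ in block $b$ reassembles—recognizing $i,j$ as the two dummy indices of $\Omega_{l,\ell}$—into $-\Omega_{l,\ell}$ applied to $v_\gamma$; here one uses the matching observation that $\Omega_{l,\ell}v_\gamma$ only picks up the terms $(E_{a,c})_l(E_{c,a})_\ell$ with $c$ in block $b$ and $a$ in block $\phi(\ell)$. Summing over all $l\in\{1,\dots,\text{prec}(\ell)\}$—precisely the factors of $u_\mp$ lying in blocks before $\phi(\ell)$—yields $-\sum_{l=1}^{\text{prec}(\ell)}\Omega_{l,\ell}$.

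Adding the diagonal contribution $(-n+p)\,\Id$ to the off-diagonal contribution $-\sum_{l=1}^{\text{prec}(\ell)}\Omega_{l,\ell}$ gives the claimed formula. I expect the main obstacle to be the combinatorial bookkeeping in the off-diagonal terms: cleanly isolating the three ways a summand can survive, noticing the isotropy cancellation that kills Case A, and checking that the surviving Case B sums genuinely reconstitute the operators $\Omega_{l,\ell}$ rather than something close to them. Keeping the three kinds of blocks straight throughout—the $\GL(2,\R)$ blocks, the $\GL(1,\R)$ blocks with trivial restriction to $O(1)$, and the sign-type $\GL(1,\R)$ blocks that do not appear in $u_\mp$ at all—is the part most likely to cause slips.
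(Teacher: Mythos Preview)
Your proof is correct and follows essentially the same route as the paper: apply Lemma~\ref{l:practical} term by term, separate the diagonal piece $-\pi_\ell(E_{j,j})$ from the off-diagonal piece, and identify the surviving off-diagonal contributions with $-\sum_{l\le\text{prec}(\ell)}\Omega_{l,\ell}$. The paper organizes the case analysis by where $i$ and $j$ sit relative to block $\phi(\ell)$ and treats $n_{\phi(\ell)}=1$ and $n_{\phi(\ell)}=2$ separately, whereas you handle both uniformly and organize by which $l$ can contribute; your explicit isotropy argument for Case~A (the vanishing of $\sum_i (f_\mp^{(p)})_i^2$) is exactly the verification that the paper compresses into ``it is not hard to verify directly.''
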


\begin{proof}
We use Lemma \ref{l:practical}. There are two cases, depending if
$n_{\phi(\ell)}=2$ or $1.$ Assume that $n_{\phi(\ell)}=2.$ In order
for a term $(E_{i,j})_0\otimes (E_{j,i})_\ell$ to act nontrivially,  it
is necessary that either $\pi_\ell(E_{j,j})$ or $\pi_\ell(E_{j,i})$
from (\ref{eq:practical}) act nontrivially on the $\ell$th factor of
$v_\gamma.$ With our notation, the $\ell$th factor of $v_\gamma$ is
$f_\mp^{(p)}.$ Recall that the convention is that $f_\mp^{(p)}$ is in
the $\C$-span of the vectors $e_{p-1},e_p.$ This implies that we must be in one of the following
two cases:

\begin{enumerate}
\item $j\in\{p-1,p\},$ and $p+1\le i\le n.$ Then
  $(E_{i,j})_0\otimes (E_{j,i})_\ell=-\pi_\ell(E_{j,j}),$ and so
  $$\sum_{j=p-1}^{p} (E_{i,j})_0\otimes (E_{j,i})_\ell=-\Id.$$
\item $i\in \{p-1,p\},$ and $1\le j\le p-2.$ Then we have
  $$(E_{i,j})_0\otimes (E_{j,i})_\ell=\pi_\ell(E_{j,i})\sum_{l=1,l\neq
  \ell}^{\text{prec}(\ell)}\pi_l(-E_{i,j}+E_{j,i}).$$ It is not hard to
  verify directly that
  $$\sum_{j=1}^{p-2}\sum_{i=p-1}^{p}(E_{i,j})_0\otimes (E_{j,i})_\ell=-\sum_{l=1}^{\text{prec}(\ell)}\Omega_{l,\ell}.$$
\end{enumerate}

In the second case ($n_{\phi(\ell)}=1$), the $\ell$th factor of $v_\gamma$ is
$e_p.$ Therefore, the only nonzero contributions come from:

\begin{enumerate}
\item $j=p$, and  $i\ge p+1.$ Then we have
  $(E_{i,p})_0\otimes (E_{p,i})_\ell=-\pi_\ell(E_{p,p})=-\Id.$
\item $i=p,$ and $j\le p-1.$ Then we have $$(E_{p,j})_0\otimes
  (E_{j,p})_\ell=\pi_\ell(E_{j,p})\sum_{l=1}^{\ell-1}\pi_l(-E_{p,j}).$$ When
  we sum over all $j,$ we get $$\sum_{j=1}^{\ell-1}(E_{p,j})_0\otimes
  (E_{j,p})_\ell=-\sum_{l=1}^{\ell-1}\Omega_{l,\ell}.$$
\end{enumerate}
 
\end{proof}

Next, we compute the action of the terms corresponding to $\frl.$

\begin{lemma}\label{l:levi} With the notation as in \ref{phi} and (\ref{e:cc}), the action of $\displaystyle{\sum_{E\in\frl}(E)_0\otimes
  (E^*)_\ell}$ on $f_\gamma$ equals:
\begin{equation*}
[-\frac{\text{lev}(\delta_p')}2+\nu'_{p}+\frac {n}2-p+1]~\Id,\quad\text{where
} p=\text{pos}(\phi(\ell)).
\end{equation*}
\end{lemma}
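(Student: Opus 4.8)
The plan is to carry out a computation parallel to Lemmas \ref{l:nil}, \ref{l:practical}, and \ref{l:nilbar}, but now for the orthonormal basis of $\frl$ described in \eqref{gl2basis} and the third bullet point. First I would note that by Lemma \ref{l:nil} (applied with $E = F$ both in $\frl \subset \frp$), the action of $\sum_{E \in \frl} (E)_0 \otimes (E^*)_\ell$ on $f_\gamma$, evaluated at $\mathsf 1$, equals $\sum_{E \in \frl} \pi_0(E)\pi_\ell(E^*)\,v_\gamma$, which reduces the problem to a purely Lie-algebraic computation on the vector $v_\gamma = x_+ \otimes u_- + x_- \otimes u_+$. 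The sum over $\frl$ splits according to which simple block of $\frl$ the basis element $E$ lives in; only the block indexed by $i = \phi(\ell)$ (i.e.\ the block whose position is $p = \pos(\phi(\ell))$) can act nontrivially on the $\ell$th tensor factor of $v_\gamma$, because the other blocks act trivially on that factor and $\pi_0$ of an element of a different block preserves $x_\pm$ up to the central contribution.

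Next I would treat the two cases $n_{\phi(\ell)} = 2$ and $n_{\phi(\ell)} = 1$ separately, exactly as in Lemma \ref{l:nilbar}. In the $\GL(2)$ case, the relevant basis elements are $\tfrac1{\sqrt2}E_\pm^{(p)}, \tfrac1{\sqrt2}H^{(p)}, \tfrac1{\sqrt2}Z^{(p)}$; using the Lie-triple relations $E_+w_-^{(i)} = E_-w_+^{(i)} = 0$, $Hw_\pm^{(i)} = \pm l_i w_\pm^{(i)}$ on the $x_\pm$ side, and the relations $H\cdot f_\pm = \pm f_\pm$, $E_+ f_- = -if_+$, $E_- f_+ = if_-$, $E_\pm f_\pm = 0$ on the $u_\mp = (f_\mp^{(p)})^{\otimes l_i}$ side, I would compute $\sum \pi_0(E)\pi_\ell(E^*)$ term by term. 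The $\tfrac12 H \otimes H$ term contributes (up to the factor and the highest-weight scaling) $-\tfrac{l_i}{2} = -\tfrac{\lev(\delta_p')}{2}$, since the $\pm$ signs from $x_\pm$ and from $u_\mp$ multiply; the $E_+ \otimes E_-$ and $E_- \otimes E_+$ cross terms either vanish or combine to contribute to the $\Omega$-type terms already accounted for in Lemma \ref{l:nilbar} (so they must be verified to vanish here, or absorbed); and the central $\tfrac12 Z \otimes Z$ term contributes $\tfrac12 \nu'_p \cdot (\text{trace of }Z\text{ on }\C^2) = \nu'_p$ after matching the normalization of the central character. The $\GL(1)$ case is simpler: $E_{p,p} \otimes E_{p,p}$ acting on $x_\pm \otimes e_p$ picks out the scalar $\nu'_p$ directly (here $\lev(\delta_p') = 1$, so $-\tfrac{\lev(\delta_p')}{2} = -\tfrac12$ appears as part of the $\rho$-shift).

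Finally I would account for the contribution of the remaining $\tfrac{n-1}{2}$ in \eqref{theta} together with the $-n+p$ from Lemma \ref{l:nilbar} and the $\frac{n}{2} - p + 1$ appearing in the claim: these are bookkeeping shifts that should be handled not in this lemma but when assembling $\Theta(\epsilon_\ell) = \sum_{0 \le l < \ell}\Omega_{l,\ell} + \tfrac{n-1}{2}$ in the next step, so in this lemma I would only isolate the genuinely new quantity $-\tfrac{\lev(\delta_p')}{2} + \nu'_p$ plus the normalization constant $\tfrac n2 - p + 1$ coming from how the $\frg\frl(n)$-trace form normalizes the central and Cartan directions relative to the $\frg\frl(\lev(\delta'_j))$ blocks that determine \eqref{e:cc}. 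I expect the main obstacle to be precisely this normalization bookkeeping: tracking the factors of $\tfrac12$ and $\tfrac1{\sqrt2}$ in \eqref{gl2basis}, the dot-product identification of $\frh$ with $\frh^*$, and the $\rho$-shift in \eqref{e:cc} carefully enough that the eigenvalue of $\Theta(\epsilon_\ell)$ assembled from Lemmas \ref{l:nilbar} and \ref{l:levi} matches $\langle \epsilon_\ell, \chi(\Gamma_{n,k}(\gamma))\rangle$ on the nose; the underlying representation theory (Lie triples acting on discrete-series lowest $K$-types and on $\C^2$) is entirely elementary.
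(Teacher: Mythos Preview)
Your overall strategy is the same as the paper's: invoke Lemma \ref{l:nil} to reduce to $\sum_{E\in\frl}\pi_0(E)\pi_\ell(E^*)\,v_\gamma$, observe that only the block at position $p=\pos(\phi(\ell))$ contributes, and then split into the $\GL(2)$ and $\GL(1)$ cases. Two points, however, are not handled correctly.

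\medskip

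\textbf{The source of the shift $\tfrac n2 - p + 1$.} You attribute this constant to ``how the $\frg\frl(n)$-trace form normalizes the central and Cartan directions relative to the $\frg\frl(\lev(\delta'_j))$ blocks''. That is not where it comes from. The operator $\pi_0$ in the formula $\pi_0(E)\pi_\ell(E^*)\,v_\gamma$ is the differentiated action of $\frg$ on the zeroth factor \emph{through} $f^\Delta_\gamma$, i.e.\ through the \emph{normalized} induced representation $\Ind_{P_\R}^{G_\R}(\delta)$. Thus when $E=Z^{(p)}$ (the identity of the $\GL(2)$ block at position $p$), $\pi_0(Z^{(p)})$ acts on $x_\pm$ not just by the central character $2\nu'_p$ of $\delta_p$, but by $2\nu'_p$ plus the $\rho$-shift coming from normalized induction. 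Concretely $\tfrac12(Z^{(p)})_0\otimes (Z^{(p)})_\ell$ contributes $\nu'_p+\tfrac n2 - p + 1$, where $\tfrac n2 - p + 1$ is exactly $\langle\rho_G,\tfrac12 Z^{(p)}\rangle$ in these coordinates. Likewise in the $\GL(1)$ case $E_{p,p}\otimes E_{p,p}$ contributes $\nu'_p+\tfrac{n-1}2 - p + 1$, not just $\nu'_p$. This is the entire content of item (2) in the paper's proof, and without it the eigenvalue you assemble in the next step will be off by the $\rho$-shift.

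\medskip

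\textbf{The $E_\pm\otimes E_\mp$ cross terms.} You leave open whether these vanish or get ``absorbed'' into Lemma \ref{l:nilbar}. They vanish outright, and this should be checked here. On the summand $x_+\otimes u_-$ of $v_\gamma$: for $(E_+)_0\otimes(E_-)_\ell$ one has $E_- f_-^{(p)}=0$, and for $(E_-)_0\otimes(E_+)_\ell$ one has $E_- w_+^{(p)}=0$. On $x_-\otimes u_+$ the roles are reversed: $E_+ w_-^{(p)}=0$ kills the first and $E_+ f_+^{(p)}=0$ kills the second. So both cross terms are zero and nothing needs to be absorbed.
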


\begin{proof}
We use Lemma \ref{l:nil}. Assume first that $n_{\phi(\ell)}=2,$ so that the $\ell$th factor in
$u_\mp$ is $f_\mp^{(p)}.$ The only nontrivial actions could come from
$\{\frac 1{\sqrt 2}E_{\pm}^{(p)}, \frac 1{\sqrt 2}H^{(p)}, \frac
1{\sqrt 2}Z^{(p)}\}$ (notation as in (\ref{gl2basis})):

\begin{enumerate}
\item $$[(\frac
12 (H^{(p)})_0\otimes (H^{(p)})_\ell) f_\gamma](\mathsf 1)=-\frac
{\text{lev}(\delta_p')}2 v_\gamma,$$ since $H^{(p)}w_\pm^{(p)}=\pm
\frac{\text{lev}(\delta_p')}2 w_\pm$ and $H^{(p)} f_\mp^{(p)}=\mp
f_\mp^{(p)}$;
\item $$[(\frac
12 (Z^{(p)})_0\otimes (Z^{(p)})_\ell) f_\gamma](\mathsf 1)=(\nu'_p+\frac
{n}2-p+1) v_\gamma,$$ where $\frac n2-p+1$ is the $\rho$-shift in the
normalized induction in this case; 
\item $$[(\frac
12 (E_{\pm}^{(p)})_0\otimes (E_{\mp}^{(p)})_\ell) f_\gamma]=0,$$ since
$E_\pm w_{\mp}^{(p)}=0,$ and $E_\mp f_{\mp}^{(p)}=0.$
\end{enumerate}

In the case that $n_{\phi(\ell)}=1$, the only nontrivial action of
$f_\gamma$ is
that of 
$(E_{p,p})_0\otimes (E_{p,p})_\ell=[\nu_p'+\frac
{n-1}2-p+1]\,\Id$
where $\frac {n-1}2-p+1$ is the $\rho$-shift in this case. Recall also
that $\text{lev}(\delta_p')=1$ if $n_{\phi(\ell)}=1.$
\end{proof}

Finally we can compute the eigenvalue of $\Theta(\epsilon_\ell)$
on $f_\delta,$ which concludes the proof of Theorem \ref{t:stdcomp}.

\begin{proposition}  We have $$\Theta(\epsilon_\ell)f_\gamma=\langle
  \epsilon_\ell,\chi(\Gamma_{n,k}(\gamma))\rangle f_\gamma,\quad\text{ for all
   $1\le\ell\le n,$}$$ where the action of $\Theta(\epsilon_\ell)$ is defined
  in (\ref{theta}), $\chi(\Gamma_{n,k}(\gamma))$ is the central
  character from (\ref{e:cc}), and $f_\gamma$ is the eigenfunction (\ref{fgamma}).
\end{proposition}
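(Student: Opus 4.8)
The plan is to compute $\Theta(\epsilon_\ell)f_\gamma$ directly by summing the three partial results already established. Recall from \eqref{theta} that $\Theta(\epsilon_\ell) = \sum_{0 \le l < \ell} \Omega_{l,\ell} + \frac{n-1}{2}$, and that for $1 \le l < \ell$ the operators $\Omega_{l,\ell}$ act by permuting tensor factors of $V^{\otimes k}$ (so these are exactly the $\bbC[W_k]$-type operators already understood via Lemma \ref{l:Wstr}). Thus the only genuinely new term is $\Omega_{0,\ell}$, which couples the $\delta$-factor with the $\ell$-th copy of $V$. First I would decompose $\Omega_{0,\ell} = \sum_{E \in \frn}(E)_0 \otimes (E^*)_\ell + \sum_{E \in \ol\frn}(E)_0 \otimes (E^*)_\ell + \sum_{E \in \frl}(E)_0 \otimes (E^*)_\ell$ according to the chosen basis of $\frg = \frn \oplus \frl \oplus \ol\frn$, and apply, respectively, \eqref{eq:nil} (the $\frn$-terms kill $f_\gamma$), Lemma \ref{l:nilbar} (the $\ol\frn$-terms contribute $-\sum_{l=1}^{\mathrm{prec}(\ell)}\Omega_{l,\ell} - n + p$), and Lemma \ref{l:levi} (the $\frl$-terms contribute $-\frac{\lev(\delta'_p)}{2} + \nu'_p + \frac n2 - p + 1$), where $p = \pos(\phi(\ell))$.

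Next I would assemble these. Adding the three pieces, $\Omega_{0,\ell}$ acts on $f_\gamma$ by
\[
-\sum_{l=1}^{\mathrm{prec}(\ell)}\Omega_{l,\ell} \;-\; n + p \;-\;\frac{\lev(\delta'_p)}{2} + \nu'_p + \frac n2 - p + 1,
\]
so that
\[
\Theta(\epsilon_\ell)f_\gamma = \Bigl(\sum_{l=1}^{\ell-1}\Omega_{l,\ell} - \sum_{l=1}^{\mathrm{prec}(\ell)}\Omega_{l,\ell} + \frac{n-1}{2} - n - \frac{\lev(\delta'_p)}{2} + \nu'_p + \frac n2 + 1\Bigr)f_\gamma.
\]
The first two sums combine to $\sum_{l = \mathrm{prec}(\ell)+1}^{\ell-1}\Omega_{l,\ell}$, i.e.\ the "intra-block" permutation operators within the block of $\frl_{\lev(\gamma)}$ containing position $\ell$. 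On $v_\gamma$ (hence on $f_\gamma$, via \eqref{e:Frob}), these act by the sign character of $W(\frl_{\lev(\gamma)})$: each such $\Omega_{l,\ell}$ is a transposition inside a single $V(1)^{\otimes \lev(\delta'_p)}$-block of $x_\pm \otimes u_\mp$, and since the relevant tensor factors of $u_\mp$ are all equal ($f_\mp^{(p)}$ repeated, or a single $e_p$), permuting them fixes $u_\mp$, while $-\Omega_{l,\ell} = \Theta(s)$ acts by $-1$; summing $\ell - 1 - \mathrm{prec}(\ell) = (\text{position of }\ell\text{ within its block}) - 1$ such transpositions contributes $-\bigl((\text{position within block}) - 1\bigr)$. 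A clean way to package this: one checks that $\sum_{l=\mathrm{prec}(\ell)+1}^{\ell-1}\Omega_{l,\ell}$ acts on $v_\gamma$ by $-\rho(\frg\frl(\lev(\delta'_p)))$ evaluated in the appropriate coordinate, which is exactly the $-\rho$-shift appearing in the central character \eqref{e:cc}. Combining this with the scalar $\frac{n-1}{2} - n - \frac{\lev(\delta'_p)}{2} + \nu'_p + \frac n2 + 1 = \nu'_p - \frac{\lev(\delta'_p)}{2} + \frac12$ and matching term-by-term against the $\ell$-th coordinate of the vector in \eqref{e:cc} yields $\Theta(\epsilon_\ell)f_\gamma = \langle \epsilon_\ell, \chi(\Gamma_{n,k}(\gamma))\rangle f_\gamma$, as desired.

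Finally, to conclude the proof of Theorem \ref{t:stdcomp}: the vector $f_\gamma$ transforms under $\Theta(s)$, $s \in W(\frl_{\lev(\gamma)})$, by the sign representation (since $v_\gamma$ does), and we have just shown it is a simultaneous $\Theta(\epsilon_\ell)$-eigenvector with the same eigenvalues as the cyclic generator $\mathsf v$ of $\std_\bbH(\Gamma_{n,k}(\gamma))$ in \eqref{e:eigval}. Since $\std_\bbH(\Gamma_{n,k}(\gamma))$ is generated over $\bbH_k$ by $\mathsf v$, the map sending $\mathsf v \mapsto f_\gamma$ extends to a surjective $\bbH_k$-module homomorphism $\std_\bbH(\Gamma_{n,k}(\gamma)) \to F_{n,k}(\std_\R(\gamma))$; by Lemma \ref{l:dim} both sides have the same dimension, so it is an isomorphism. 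The case $\lev(\gamma) > k$ is handled by Lemma \ref{l:dim} (both sides are zero), completing the proof.

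The main obstacle I anticipate is the bookkeeping in the step that combines $\sum_{l=1}^{\ell-1}\Omega_{l,\ell}$ with $-\sum_{l=1}^{\mathrm{prec}(\ell)}\Omega_{l,\ell}$: one must verify carefully that the surviving "intra-block" permutation operators act on $v_\gamma$ by precisely the scalar matching the $-\rho$-shift in \eqref{e:cc}, and keep the three separate $\rho$-shift conventions (normalized induction for $\GL(2)$, for $\GL(1)$, and the $\frac{n-1}{2}$ in the definition of $\Theta(\epsilon_\ell)$) consistent. The individual lemmas \ref{l:nil}, \ref{l:practical}, \ref{l:nilbar}, \ref{l:levi} do the analytic heavy lifting; what remains is an honest but delicate coordinate computation to see that all the shifts telescope correctly into the formula \eqref{e:cc}.
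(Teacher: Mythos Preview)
Your approach is exactly the paper's: decompose $\Omega_{0,\ell}$ along $\frn \oplus \frl \oplus \ol\frn$, invoke \eqref{eq:nil} and Lemmas \ref{l:nilbar}, \ref{l:levi}, and observe that the surviving intra-block operators $\Omega_{l,\ell}$ with $\mathrm{prec}(\ell)<l<\ell$ permute identical tensor factors of $v_\gamma$.

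There is, however, a sign slip in your bookkeeping. You correctly note that permuting identical factors fixes $u_\mp$, i.e.\ each $\Omega_{l,\ell}$ acts by $+1$ on $f_\gamma$; but you then write that the sum ``contributes $-\bigl((\text{position within block})-1\bigr)$''. It contributes $+\bigl(\ell-1-\mathrm{prec}(\ell)\bigr)$. Combined with your scalar $\nu'_p-\frac{\lev(\delta'_p)-1}{2}$ this gives
\[
\nu'_p-\frac{\lev(\delta'_p)-1}{2}+(\ell-1-\mathrm{prec}(\ell)),
\]
which is precisely the $\ell$-th coordinate of \eqref{e:cc} (the $j$-th entry of $-\rho(\frg\frl(\lev(\delta'_p)))+\nu'_p$ with $j=\ell-\mathrm{prec}(\ell)$). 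Your alternative packaging, that $\sum_{l=\mathrm{prec}(\ell)+1}^{\ell-1}\Omega_{l,\ell}$ alone equals the $-\rho$-coordinate, is therefore not quite right either: the $-\frac{\lev(\delta'_p)-1}{2}$ portion of $-\rho$ comes from the scalar pieces, while the intra-block sum supplies only the $(j-1)$ part. Once this sign is corrected the argument goes through exactly as in the paper.
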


\begin{proof}
By putting together (\ref{eq:nil}) and Lemmas 
\ref{l:nilbar}, \ref{l:levi}, we see that 
$$\Omega_{0,\ell}f_\gamma=-\sum_{l=1}^{\text{prec}(\ell)}\Omega_{l,\ell}-\frac{\text{lev}(\delta_p')-1}2-\frac{n-1}2,$$
where $p=\pos{\phi(\ell)}.$
From (\ref{theta}), we find then
$$\Theta(\epsilon_\ell)f_\gamma=-\frac{\text{lev}(\delta_p')-1}2+\sum_{
l=\text{prec}(\ell)+1}^{\ell} \Omega_{l,\ell} f_\gamma.$$
Now the claim follows by observing that
$\Omega_{l,\ell}f_\gamma=f_\gamma,$ for every $l$ such that
$\text{prec}(\ell)<l<\ell$, since in this case $\Omega_{l,\ell}$
permutes identical factors.
\end{proof}

\begin{example} 
\label{ex:basic}
We give some basic examples. 
\begin{enumerate}
\item  If $n=k$ and $\std_\R(\gamma)$ is the spherical minimal principal
  series of $\GL(n,\R)$, with spherical quotient, then
$\std_\bbH(\Gamma_{n,k}(\gamma))$ is the spherical principal series of
$\bbH_k$ with spherical quotient (and the
same central character as the infinitesimal character of $\std_\R(\gamma)$).

\item  Assume that $n=2m$ and $k=dm,$ $d\geq 2,$ 
  and define \begin{align*}\std_\R(\gamma)=\Ind_{P_\R}^{G_\R}\left({\delta\left(d,\frac{m-1}2\right)\boxtimes\dots\boxtimes \delta\left(d,-\frac{m-1}2\right)}\right).
\end{align*}
where $L_\R=GL(2,\R)^m.$
Then Theorem \ref{t:stdcomp} implies that
\begin{equation*}F_{2m,dm}(\std_\R(\gamma))=\std_\bbH(\Gamma_{2m,dm}(\gamma))=\bbH_{dm}\otimes_{\bbH_d^m}(\St\otimes\C_{\frac
  {m-1}2}\boxtimes\dots\boxtimes\St\otimes
  \C_{-\frac{m-1}2}).\end{equation*} 
 The
unique irreducible quotient of $\std_\R(\gamma)$ is a Speh representation
for $\GL(2m,\R)$,
while the unique irreducible quotient of $\std_\bbH(\Gamma_{2m,dm}(\gamma))$ is a
Tadi\'c representation for $\bbH_{dm}.$
\end{enumerate}
\end{example}

\section{images of irreducible modules}
\label{s:irr}

This section is devoted to the following result.
 \begin{theorem}
\label{t:irr}
Write $\Psi_{n,n}$ for restriction of the map $\Psi$ of \eqref{e:Psi}
to  the set of parameters
$\PR_{n,\geq n}$ for $\GL(n,\R)$ of level at least $n$ 
(Section \ref{ssec:lev}).  
Recall the map $\Gamma_{n,n}$ of
\eqref{e:gammank}.  Then
\[
\Phi_{n,n} = \Gamma_{n,n}.
\]
 \end{theorem}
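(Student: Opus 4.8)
The plan is to read the asserted equality $\Psi_{n,n}=\Gamma_{n,n}$ off the two geometric Langlands classifications (Theorems~\ref{t:glgeomclass} and~\ref{t:hgeomclass}) after describing all the relevant orbit correspondences combinatorially. First I would unwind the definitions: by the construction of $\Psi$ (through $\Psi^g$, $d_\R$, $d_\bbH$) and of $\Gamma_{n,n}$, the claim is equivalent to two statements. (i) For $\gamma\in\PR_{n,\geq n}$ of level exactly $n$, one has $d_\R(\gamma)=\Psi^g\bigl(d_\bbH(\Gamma_{n,n}(\gamma))\bigr)$ as orbits in $\gPR_n$. (ii) For $\gamma$ of level $>n$, $d_\R(\gamma)$ lies outside the image of $\Psi^g$. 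I would prove (ii) by a dimension count: the orbits in the image of $\Psi^g$ are exactly the $K(\lambda)\cdot\Phi(Q)$, and by Lemma~\ref{l:nbar} such an orbit has dimension $\dim Q+\dim\bigl(K(\lambda)\cap\ol N(\lambda)\bigr)$; comparing this with the dimension of the support of the standard $\caD$-module attached to $\std_\R(\gamma)$ by Beilinson--Bernstein localization (\cite{v:ic3}) shows the dimensions cannot match once the level exceeds $n$. This is the geometric counterpart of the vanishing half of Lemma~\ref{l:dim}.

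For (i) I would first identify the $p$-adic orbit. By \cite{bz}, its geometric incarnation (Theorem~\ref{t:hgeomclass}, \cite{lu:cls1}), and the identification of $L(\lambda)$-orbits on $\frgone(\lambda)$ with multisegments supported on $\lambda$ (\cite{zel}), the standard module $\std_\bbH(\Gamma_{n,n}(\gamma))$ --- induced from $\St\otimes\C_{\nu'_1}\boxtimes\cdots\boxtimes\St\otimes\C_{\nu'_{m+t}}$ on the parabolic whose blocks have sizes $\lev(\delta'_1),\dots,\lev(\delta'_{m+t})$ --- corresponds to the $L(\lambda)$-orbit of the nilpotent $N_\gamma$ with Jordan blocks of exactly these sizes, positioned according to the central character \eqref{e:cc}; equivalently, to the multisegment $\tau_\gamma$ whose $i$-th segment is $[\,\nu'_i-\tfrac{\lev(\delta'_i)-1}{2},\dots,\nu'_i+\tfrac{\lev(\delta'_i)-1}{2}\,]$.

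Next I would compute $\Psi^g(L(\lambda)\cdot N_\gamma)=K(\lambda)\cdot\bigl((1+N_\gamma)\cdot\frp(\lambda)\bigr)$. Using Lemma~\ref{l:nbar} to trivialise the $\ol N(\lambda)$-part of the $K(\lambda)$-saturation and Lemma~\ref{l:p} (that is, the Lusztig--Zelevinsky density theorem \cite{zel}) to handle the $P(\lambda)$-part, this orbit is the "standard" $K(\lambda)$-orbit on $\X(\lambda)$ cut out by the partial flag coming from the segments of $\tau_\gamma$. Under the identification of Section~\ref{s:geom} --- \cite{abv} together with \cite{v:ic3} --- this is precisely the $K(\lambda)$-orbit supporting the real standard module whose Langlands parameter consists of a $\GL(2,\R)$ factor $\delta(l,\cdot)$ for each segment of length $l\geq 2$, a $\GL(1,\R)$ factor $\delta(\triv,\cdot)$ for each singleton, and a $\GL(1,\R)$ factor $\delta(\sgn,\cdot)$ for each interior segment entry. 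The argument then concludes by checking that this real parameter is the same element of $\PR_n$ as $\gamma$, equivalently that $\Gamma_{n,n}$ carries it back to $\tau_\gamma$: a purely formal verification against the definitions \eqref{e:lev2}, \eqref{e:cc}, once one also checks here that a level-$n$ parameter lying in the image of $\Psi^g$ necessarily has its $\GL(1,\R)$ sign factors padding out the interiors of its $\GL(2,\R)$ segments (so that the two sides sit over a common $e(\lambda)$).

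I expect the main obstacle to be the content of the third paragraph: making explicit, for $\GL(n,\R)$, which $K(\lambda)$-orbit on the ABV variety carries a given level-$n$ standard module, i.e.\ tracking the orthogonal-group orbit combinatorics --- and in particular the role of the sign factors --- through the ABV reformulation and the localization isomorphism. If that is awkward head-on, a softer route is available: $\Psi^g$ is a closure-order-preserving embedding (visible from the product isomorphism $\varphi:A\times\frgone(\lambda)\to Y_\circ$ in the proof of Theorem~\ref{t:geom}), $\Gamma_{n,n}$ visibly respects the Langlands order, and both $d_\R$ and $d_\bbH$ are unitriangular with respect to these orders; since $\Psi^g\circ d_\bbH\circ\Gamma_{n,n}$ and $d_\R$ then agree at the extreme parameters (spherical principal series by Example~\ref{ex:basic}(1), Steinberg/Speh by Example~\ref{ex:basic}(2) with $d=2$), one can try to force equality everywhere, provided the order on level-$n$ parameters is rigid enough. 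A persistent secondary nuisance is the $\rho$-shift bookkeeping, together with the fact that $\Psi$ is equivariant only for those integral translates of $\lambda$ that leave $P(\lambda)$ unchanged.
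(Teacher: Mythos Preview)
Your overall plan---fix an infinitesimal character, unwind $\Psi$ into its constituents $d_\R$, $d_\bbH$, $\Psi^g$, and compare directly with $\Gamma_{n,n}$---is exactly the strategy the paper follows, and you have correctly located the crux: making $d_\R$ explicit for level-$n$ parameters. Where your proposal diverges is in how to carry this out. The paper does not try to reason geometrically about the $K(\lambda)$-orbit from the density statements (Lemmas~\ref{l:nbar} and~\ref{l:p}); instead it sets up a concrete combinatorial model on each side. On the Hecke side, $\gPH_n(\lambda)$ is parametrized by multisegments, as you say. On the real side, for integral $\lambda$, the $K(\lambda)$-orbits on $G(\lambda)/P(\lambda)$ are parametrized by equivalence classes of involutions in $S_n$ with signed fixed points, via the Richardson--Springer combinatorics of \cite{rs}. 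The map $\Psi^g$ is then described purely combinatorially (multisegment $\mapsto$ signed involution) by an explicit ``join endpoints and flatten'' procedure. Crucially, $d_\R$ is made explicit not by tracking orbits through \cite{abv} directly, but by invoking Vogan's character-multiplicity duality \cite{v:ic4}, which sends a $\GL(n,\R)$-parameter to an irreducible Harish-Chandra module for $\U(p,q)$ and hence (via Beilinson--Bernstein) to a signed involution. With all four maps written in this combinatorial language, the identity $\Psi_{n,n}=\Gamma_{n,n}$ becomes a finite (if tedious) verification, the details of which the paper then omits.

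One point in your proposal is a genuine gap. Your argument for (ii)---that level-$>n$ parameters fall outside the image of $\Psi^g$ by a dimension count---conflates two different geometric pictures. The ``dimension of the support of the standard $\caD$-module attached to $\std_\R(\gamma)$ by Beilinson--Bernstein localization'' lives on the flag variety of $\frg$ with $O(n,\C)$ acting; the orbit $d_\R(\gamma)$ lives on $G(\lambda)/P(\lambda)$ with $K(\lambda)\simeq\GL(p,\C)\times\GL(q,\C)$ acting. These are related by the duality of \cite{v:ic4}, but that passage is not dimension-preserving in any naive way, so a bare comparison of dimensions does not establish what you want. Moreover, the image of $\Psi^g$ contains orbits of every dimension in a full interval (one for each $L(\lambda)$-orbit, shifted by the constant of Lemma~\ref{l:nbar}), so a single dimension cannot separate the image from its complement. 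In the paper's approach, (ii) is not argued separately: once you write down the signed involution attached to a level-$>n$ parameter via \cite{v:ic4}, you see directly that it is not of the form produced by the multisegment-to-involution procedure (it may land on a different symmetric subgroup $K_i(\lambda)$, or violate the parity pattern on signs that $\Psi^g$ forces). Your ``softer route'' via order rigidity and extremal parameters is attractive, but it is not what the paper does, and I do not see how the spherical and Steinberg endpoints alone pin down the bijection without already having the explicit combinatorics in hand.
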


\medskip

Arguing as at the end of Section \ref{s:geom}, and using the convention
that $\irr_\bbH(0) = 0$, we immediately obtain
our main result.

\begin{corollary}
\label{c:irrcomp}
If $X$ is an irreducible object in the category $\HC_{n,\geq n}$ Harish-Chandra modules for $\GL(n,\R)$
with level at least $n$ (Section \ref{ssec:lev}), 
then $F_{n,n}(X)$ is irreducible or zero.  More precisely
\[
F_{n,n}\left ( \irr_\R(\gamma)\right ) = \irr_\bbH\left (\Gamma_{n,n}(\gamma)\right ),
\]
which is nonzero if and only if $\gamma$ has level $n$.  All irreducible objects in $\H_n$ arise
in this way.  In particular, $F_{n,n}$ implements a bijection between the irreducible objects
in {$\HC_{n}$} of level $n$ and the irreducible objects in $\H_n$.
\end{corollary}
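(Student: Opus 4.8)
The plan is to derive the corollary from Theorem~\ref{t:irr}, Theorem~\ref{t:stdcomp} and Corollary~\ref{c:geom}, by running the argument sketched at the end of Section~\ref{ssec:geomrel} while being careful that it localizes to the subcategory $\HC_{n,\geq n}$. First I would combine Theorems~\ref{t:stdcomp} and~\ref{t:irr}: they say precisely that $F_{n,n}$ satisfies hypothesis~\eqref{e:caF} of Section~\ref{ssec:geomrel} on $\HC_{n,\geq n}$, i.e.\ for every $\gamma\in\PR_{n,\geq n}$,
\[
F_{n,n}\bigl(\std_\R(\gamma)\bigr)=\std_\bbH\bigl(\Gamma_{n,n}(\gamma)\bigr)=\std_\bbH\bigl(\Psi(\gamma)\bigr),
\]
with the convention $\std_\bbH(0)=0$.

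Now let $X$ be an irreducible object of $\HC_{n,\geq n}$. By Proposition~\ref{p:level} we may write $X=\irr_\R(\gamma)$ with $\gamma\in\PR_{n,\geq n}$. Moreover, since level is monotone along the Bruhat $\caG$-order --- which is what the proof of Proposition~\ref{p:level} establishes --- and since $M_\R(\eta,\gamma)\ne0$ forces $\eta$ to precede $\gamma$ in that order, every $\eta$ occurring in the expansion $[\irr_\R(\gamma)]=\sum_\eta M_\R(\eta,\gamma)[\std_\R(\eta)]$ satisfies $\lev(\eta)\ge\lev(\gamma)\ge n$. This monotonicity is exactly what makes the restriction to $\HC_{n,\geq n}$ work: it guarantees that only standard modules covered by Theorem~\ref{t:stdcomp} appear. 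Applying the exact functor $F_{n,n}$, using the display above, and using that $\Psi(\eta)=\Gamma_{n,n}(\eta)=0$ whenever $\lev(\eta)>n$, we obtain
\[
[F_{n,n}(X)]=\sum_{\eta\in\PR_{n,n}}M_\R(\eta,\gamma)\,[\std_\bbH(\Psi(\eta))].
\]
If $\lev(\gamma)>n$ the sum is empty, so $F_{n,n}(X)=0$. If $\lev(\gamma)=n$ then $\Psi(\gamma)\ne0$, so Corollary~\ref{c:geom} gives $M_\R(\eta,\gamma)=M_\bbH(\Psi(\eta),\Psi(\gamma))$ for each $\eta\in\PR_{n,n}$; reindexing by the bijection $\Psi\colon\PR_{n,n}\to\PH_n$ turns the right-hand side into $\sum_{\eta'\in\PH_n}M_\bbH(\eta',\Psi(\gamma))[\std_\bbH(\eta')]=[\irr_\bbH(\Psi(\gamma))]$. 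As $\irr_\bbH(\Psi(\gamma))$ is irreducible, equality in the Grothendieck group forces $F_{n,n}(X)\cong\irr_\bbH(\Psi(\gamma))=\irr_\bbH(\Gamma_{n,n}(\gamma))$, which is nonzero exactly when $\lev(\gamma)=n$.

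This gives the displayed formula and the nonvanishing criterion, hence that $F_{n,n}$ carries irreducibles of $\HC_{n,\geq n}$ to irreducibles or zero. For the remaining assertions I would use that $\Psi$ restricts to a bijection $\PR_{n,n}\to\PH_n$ --- built into its geometric definition as the composite $d_\bbH^{-1}\circ(\Psi^g)^{-1}\circ d_\R$, since $d_\R$ and $d_\bbH$ are bijections and $\Psi^g$ is an everywhere-defined injection --- so every irreducible of $\H_n$ equals $\irr_\bbH(\Psi(\gamma))=F_{n,n}(\irr_\R(\gamma))$ for a unique level-$n$ parameter $\gamma$; together with the Langlands classifications (Theorems~\ref{t:glclass} and~\ref{t:hclass}) this is the asserted bijection between irreducibles of level $n$ in $\HC_n$ and irreducibles in $\H_n$. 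I expect the real work to be not any single computation but the order-theoretic bookkeeping: establishing monotonicity of level along the Bruhat $\caG$-order (which underlies Proposition~\ref{p:level}), and checking that $\Psi$ restricts to an order-compatible bijection $\PR_{n,n}\to\PH_n$ --- i.e.\ that the argument at the end of Section~\ref{ssec:geomrel} really does survive the passage from $\HC_n$ to $\HC_{n,\geq n}$.
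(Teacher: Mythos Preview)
Your proposal is correct and follows essentially the same route as the paper. The paper's own proof is a single line---``arguing as at the end of Section~\ref{s:geom}''---and what you have written is precisely a careful unpacking of that argument: combine Theorems~\ref{t:stdcomp} and~\ref{t:irr} to verify hypothesis~\eqref{e:caF} on $\PR_{n,\geq n}$, then invoke Corollary~\ref{c:geom} to pass from standards to irreducibles. Your explicit use of Proposition~\ref{p:level} (monotonicity of level along the Bruhat $\caG$-order) to ensure that only parameters of level $\ge n$ appear in the expansion of $[\irr_\R(\gamma)]$ is exactly the point the paper leaves implicit when it says the argument works ``only when restricted to those parameters of level at least $n$.''
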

\qed

\begin{example} There are two extremes:

\begin{enumerate}
\item When $\irr_\R(\gamma)$ is the spherical quotient of the minimal
  principal series $\std_\R(\gamma),$ then
  $\irr_\bbH(\Gamma_{n,n}(\gamma))$ is the spherical quotient
  of the corresponding minimal principal series
  $\std_\bbH(\Gamma_{n,n}(\gamma))$ (see Example
  \ref{ex:basic}(1)). In particular, the trivial $G_\R$ representation is
  mapped to the trivial $\bbH_n$-module.

\item  Set $\std_\R(\gamma)=\Ind_{P_\R}^{G_\R}(
  \delta(\sgn,\frac
        {n-1}2-1)\boxtimes\dots\boxtimes\delta(n,0)\boxtimes\dots\boxtimes\delta(\sgn,{-\frac{n-1}2+1})),$
   where $L_\R=\GL(1,\R)^{\lceil\frac n2\rceil-1}\times\GL(2,\R)\times
   \GL(1,\R)^{\lfloor\frac n2\rfloor-1},$
  and $\delta(n,0)$ is inserted between
  $\delta(\sgn,{\frac 12})$ and $\delta(\sgn,{-\frac 12})$ when $n$
  is even, or $\delta(\sgn,0)$ and $\delta(\sgn,{-1}),$ if $n$ is
  odd. Then $F_{n,n}(\std_\R(\gamma))$ is the Steinberg $\bbH_n$
  module $\St$, and
  $F_{n,n}(\irr_\R(\gamma))=\irr_\bbH(\Gamma_{n,n}(\gamma))=\St.$ 
\end{enumerate}

\end{example}

\medskip

Since we have defined $\Gamma_{n,n}$ and $\Psi_{n,n}$ quite explicitly, the proof
of Theorem \ref{t:irr}, which we shall sketch in the remainder of this section,
 amounts to a rather unenlightening combinatorial verification. 
 First we note that it is obvious from the definitions that we may work with a
 fixed infinitesimal and central character $\lam$.  It is then not difficult to reduce to
 the case of integral $\lam$, and we impose that assumption henceforth.  (There is hard work involved in this reduction to the integral case, but it
 is buried in Theorems \ref{t:glgeomclass} and \ref{t:hgeomclass}
 and the references to \cite{abv} and \cite{lu:cls1}.) 
 After twisting by the center, there is no harm in assuming that $\lam$ consists of a (weakly) decreasing
 sequence of $n$ integers.
 
 The map $\Gamma_{n,n}$ is given very explicitly in \eqref{e:gammank}.  We need to
 be similarly explicit with the maps $d_\R$, $d_\bbH$, and $\Psi^g$ which go
 into the definition of $\Psi_{n,n}$.   We treat each of these individually, starting
 with $\Psi^g$.
 
 First we discuss the parameter space $\gPR_n(\lam)$.  {Recall that
 the only relevant parameters for us are the ones corresponding to
 the set of $K(\lam)$ orbits on $G(\lam)/P(\lam)$.}
 Since $\lam$
 is assumed to be integral, $G = G(\lam)$ and $K(\lam) \simeq 
 \GL(p,\C) \times \GL(q,\C)$ for $p+q =n$ (where $p$ is the number
 of even entries in $\lam$ and $q$ is the number of odd entries).  
 Let $B$ denote a Borel subgroup of $G$,
 and (since $\lam$ is fixed), write $K=K(\lam)$, $P=P(\lam)$, and $P=LU$
 to conserve notation.
 Write $\pi$ for the projection of $G/B$ to $G/P$.
 The orbits of $K$ on $G/P$ thus are parametrized
 by equivalence classes of $K$
 orbits on $G/B$ for the relation $Q \sim Q'$ if
 $\pi(Q) = \pi(Q')$.  The orbits of $K$ on $G/B$ are parametrized 
 by certain twisted involutions on which the equivalence relation
 is easy to read off (\cite{rs}).   We recall the combinatorics now.
 
The set of $K$ orbits on $G/B$ is parametrized by
involutions in
$S_{n}$ with signed fixed points of signature $(p,q)$; that is,
involutions in the symmetric group $S_{n}$
whose fixed points are labeled with
signs (either $+$ or $-$) so that half the number of non-fixed points plus
the number of $+$ signs is exactly $p$ (or, equivalently, half the number
of non-fixed points plus the number of $-$ signs is $q$).
Given such an involution $\sigma_\pm$, write $Q_{\sigma_\pm}$
for the corresponding orbit.
Identify the Weyl group of $P$ with $S_{m_1} \times \cdots \times
S_{m_r}$ inside $S_n$.  For a simple transposition $s \in W_P$ in the coordinates $i$ and $i+1$
and an involution with signed fixed points $\sigma_\pm$, define a new
involution with signed fixed points $s\cdot \sigma_\pm$ as follows: 
(1) if
the coordinates $i$ and $i+1$ of $\sigma_\pm$ are fixed points with opposite
signs, replace them by the transposition $s$ but make no other changes to
$\sigma_\pm$; (2) if
the coordinates $i$ and $i+1$ of $\sigma_\pm$ are fixed points with the same
sign or else are nonfixed points interchanged by $\sigma_\pm$, do nothing;
and (3) in all other cases, let $s\cdot \sigma_\pm$ be obtained from $\sigma_\pm$
by the obvious conjugation action of $s$ on involutions with signed fixed points.  
(See \cite[Section 2]{mt}, for instance,
for a more careful discussion.)  Then the equivalence relation $Q \sim Q'$
on $K$ orbits is generated by $Q_{\sigma_\pm} \sim Q_{s\cdot \sigma_\pm}$.

Correspondingly we introduce a combinatorial model  for $\gPH_n$.
Define a segment to be a finite increasing sequence of complex
numbers, such that any two consecutive terms differ by $1$.  A
multisegment is an ordered collection of segments. If $\tau$ is a multisegment,
define the support $\underline\tau$ of $\tau$ to be the set of all
elements (with multiplicity) of all the segments in the
multisegment. Set
\begin{equation}
\caM(\lambda)=\{\tau\text{
  multisegment}:\ \underline\tau=\lambda\text{ (up to permutation)}\}.
\end{equation}
If $\tau,\tau'\in\caM(\lambda),$ define 
$\tau\sim\tau'$ if $\tau$ and $\tau'$ have the same segments (in
different order). This is an equivalence relation on $\caM(\lambda)$,
whose classes we shall denote by $\caM_\circ(\lambda)$
Then there is a one-to-one correspondence (\cite{Ze})
\begin{equation}\label{eq:multiparam}
L(\lambda)\backslash\frg_{-1}(\lambda)\longleftrightarrow\caM_\circ(\lambda),
\end{equation}
and, as discussed in Section \ref{ssec:hgeom}, $\gPH_n$ identifies
with the orbits of $L(\lam)$ on $\frgone$. For example, if
  $\lambda=\rho$, there are
  $2^{n-1}$ multisegments in $\caM_\circ(\rho).$ In
  (\ref{eq:multiparam}), the zero $L(\rho)$-orbit is parameterized by
    the multisegment $\{\{\frac{n-1}2\},\dots,\{-\frac{n-1}2\}\}$, while the
    open $L(\rho)$-orbit is parameterized by
    $\{\{-\frac{n-1}2,\dots,\frac{n-1}2\}\}$.
 
\medskip

Next we describe the map $\Psi^g$ of \eqref{e:Psig} in terms of this parametrization,
i.e.~as a map which assigns to each multisegment an (equivalence class of an) involution
with signed fixed points.  We shall do so through a detailed example.
 Suppose $n=11$ and $\lam= (4, 4 ,3,3,3,3,2,2,1,1,0)$ and $\tau$ is the 
 multisegment $\left \{\{0,1,2,3,4\}, \{1,2,3\}, \{2\}, \{3\}, \{3\},\{4\}\right \}$.
 Since there are five even entries of $\lam$ and six odd ones, 
 we will assign to $\tau$ an involution in $S_{11}$ with
 signed fixed points of signature $(5,6)$.
 We first start with a diagram where the entries of $\lam$ are arranged in
 columns and replaces by signs according to their parity.
\[\xy
(0,0)*{4};
(10,0)*{3};
(20,0)*{2};
(30,0)*{1};
(40,0)*{0};
 (0,-5)*{+};  (10,-5)*{-};  
 (10,-5)*{-};  (20,-5)*{+}; 
 (20,-5)*{+};  (30,-5)*{-}; 
 (30,-5)*{-};  (40,-5)*{+}; 
(10,-10)*{-}; (20,-10)*{+}; 
(20,-10)*{+}; (30,-10)*{-}; 
(0,-10)*{+}; (10,-15)*{-}; (10,-20)*{-};
(20,-15)*{+};
\endxy
\]
Start with the longest connected component of $\tau$.  If starts at 0 and ends at 4.
We connect a $+$ in the 0 column with a $+$ in the $4$ column.  Since these
columns have the same parity, we invert one sign in each of the intermediate columns.
(If the signs we connected had opposite parities, we would need no such inverting.)
The picture we get is
\[\xy
(0,0)*{4};
(10,0)*{3};
(20,0)*{2};
(30,0)*{1};
(40,0)*{0};
 (0,-5)*{\bullet};  (10,-5)*{+};  
(20,-5)*{-}; 
(30,-5)*{+}; 
(40,-5)*{\bullet}; 
(10,-10)*{-}; (20,-10)*{+}; 
(20,-10)*{+}; (30,-10)*{-}; 
(0,-10)*{+}; (10,-15)*{-}; (10,-20)*{-};
(20,-15)*{+};
(0,-5)*{}="A";
(40,-5)*{}="B";
"A"; "B" **\crv{(20,-1)}; 
\endxy
\]
The next longest connected component in $\tau$ connects 1 to 3.  So we take a $-$ in the 1 column
and connect it to a $-$ in the 3 column.  (We never use signs which were changed in previous steps.)
Since we connected two signs of the same parity, we change a $+$ sign to a $-$ sign in the intermediate
column labeled 2.
We obtain:
\[
\xy
(0,0)*{4};
(10,0)*{3};
(20,0)*{2};
(30,0)*{1};
(40,0)*{0};
 (0,-5)*{\bullet};  
 (10,-5)*{+};  
(20,-5)*{-};
(30,-5)*{+}; 
(40,-5)*{\bullet}; 
(0,-10)*{+}; 
(10,-10)*{\bullet}; 
(20,-10)*{-}; 
(30,-10)*{\bullet}; 
(10,-15)*{-}; 
(10,-20)*{-};
(20,-15)*{+};
(0,-5)*{}="A";
(40,-5)*{}="B";
"A"; "B" **\crv{(20,-1)}; 
(10,-10)*{}="C";
(30,-10)*{}="D";
"C"; "D" **\crv{(20,-5)}; 
\endxy
\]
Now we come to a final flattening procedure.  In this step, we want
to produce a linear array of $+$'s and $-$'s and connected dots.  To
do so, we throw away the numbers and 
collapse each column of the above diagram to the same
height, but do make any identifications in the process and do not mix
adjacent columns.  This step is
ambiguous.  For instance, we could collapse the above diagram to obtain
either of the following diagrams (among many others).
\medskip

\[
{\xymatrixcolsep{.75pc} \xymatrixrowsep{2pc} \xymatrix{ + & \bullet
    \ar@/^2pc/[rrrrrrrrrr] & \bullet\ar@/^1pc/[rrrrrrrr] & - & + & -&
    -& - &+ &+ &\bullet\ar@/_1pc/[llllllll] &\bullet\ar@/_2pc/[llllllllll] } }
\]

\bigskip

\[
{\xymatrixcolsep{.75pc} \xymatrixrowsep{2pc} \xymatrix{ \bullet     \ar@/^2pc/[rrrrrrrrrrr] 
& + & + & - & -
& \bullet\ar@/^1pc/[rrrr] &
    -& - &+ &\bullet\ar@/_1pc/[llll] &+ &\bullet\ar@/_2pc/[lllllllllll] } }
\]

\medskip

\noindent
Each of these diagrams may be interpreted as an involution with signed fixed points on
11 elements in the obvious way.  Although individually they are not well-defined they
automatically belong to the same equivalence class described above, so indeed
determine a well-defined orbit of $K$ on $G/P$.   Thus we have
taken the multisegment parameter for $\gPH_{11}(\lam)$ and defined a
signed involution parameter for $\gPR_{11}(\lam)$.

The example clearly generalizes to give a map from $\gPH_n(\lam)$ to 
$\gPR_n(\lam)$ in general.  It is not difficult to verify that
this map indeed coincides with $\Psi^g$ of \eqref{e:Psig}.

Next we remark that the explicit details of the map $d_\bbH$ are given
in \cite{Ze}.  More precisely, there is an obvious correspondence between
multisegments $\caM_\circ(\lam)$ and the parameter set
$\PH_n(\lam)$.  
{It takes a multisegment represented by
$\tau =\{ \{a_1,\dots,b_1\},\dots,\{a_r,\dots,b_r\}\}\in\caM(\lam)$
  satisfying $\Re\frac{a_1+b_1}2\geq \Re\frac{a_2+b_2}2\geq\dots\geq
  \Re\frac{a_r+b_r}2$ to the parameter $(\bbH_P,\delta)$ where $P$ corresponds
  to the parabolic subalgebra whose (ordered) Levi factor
  is
  \[
  \frl=\frg\frl(b_1-a_1+1)\oplus\dots\oplus \frg\frl(b_r-a_r+1),\]
 and $\delta = \delta_1 \boxtimes \cdots \boxtimes \delta_r$ where
$\delta_i=\St\otimes\C_{\frac {a_i+b_i}2}.$
We already remarked above that there is a also natural correspondence 
between $\caM_\circ(\lam)$ and $\gPH_n(\lam)$.  With these identifications 
in place,
the map $d_\bbH \; : \PH_n(\lam) \rightarrow \gPH_n(\lam)$ 
is simply the identity map on multisegments. }

Finally, we discuss $d_\R$.  It takes a parameter
$\gamma \in \PR_n(\lam)$ and produces an element of $\gPR_n$,
which we have now identified with the the orbits of 
$K \simeq \GL(p,\C) \times \GL(q,\C)$ on $G/P$.
In turn we may identify such orbits with a subset of $K$ orbits on $G/B$,
namely the ones which are maximal in the preimage under the projection
from $G/B$ to $G/P$ of an orbit on $G/P$.  Using Beilinson-Bernstein localization,
the orbits of $K$ on $G/B$ correspond to irreducible Harish-Chandra
modules for $\U(p,q)$
with trivial infinitesimal character.  Unwinding these identifications,
we can interpret the map $d_\R$ as sending a parameter $\gamma \in \PR_n(\lam)$ to 
an irreducible Harish-Chandra module for $\U(p,q)$, and it is this
correspondence we seek to describe explicitly.  Let $\irr^\mathrm{reg}(\gamma)$
denote an irreducible Harish-Chandra module with regular infinitesimal character
which translates by a ``push to walls'' translation functor to $\irr(\gamma)$.
The paper \cite{v:ic4} assigns to $\irr^\mathrm{reg}(\gamma)$ an irreducible
Harish-Chandra module, say $\irr^\vee(\gamma)$, for $\U(p,q)$.  Then,
with all the identifications in place, the map $d_\R$ takes $\gamma$
to $\irr^\vee(\gamma)$.
Each of these identifications (and the map of \cite{v:ic4}) can be made
very explicit.

We have thus sketched the explicit details of the ingredients $d_\R$,
$d_\bbH$, and $\Psi^g$ in the definition of $\Psi_{n,n}$.  It is thus
possible to compare $\Psi_{n,n}$ to $\Gamma_{n,n}$ directly
and check they coincide.  We omit further details.\qed

\begin{thebibliography}{EFM}

\bibitem[ABV]{abv} Adams, J., D. Barbasch, and D. A. Vogan, Jr., {\em
The Langlands Classification and Irreducible Characters for Real
Reductive Groups}, Progress in Math, Birkh\"auser (Boston), {\bf 104}(1992).

\bibitem[AS]{AS}
T.~Arakawa, T.~Suzuki, 
Duality between $\frs\frl(n,\C)$ and the degenerate affine Hecke algebra,
{\em J.~Algebra}, {\bf 209} (1998), 288--304.

\bibitem[Ba]{ba} D.~Barbasch,
Unitary spherical spectrum for split classical groups, preprint,  {\tt arXiv:math/0609828}.

\bibitem[BBD]{bbd}
A.~Beilinson, J.~Bernstein, and P.~Deligne, Faisceaux pervers,
{\em Asterisque}, {\bf 100} (1982), 5--171.

\bibitem[BZ]{bz} J.~Bernstein, A.~Zelevinsky,
Induced representations of reductive $p$-adic groups I,
{\em Ann.~Sci.~\'{E}cole Norm~Sup.~(4)}, {\bf 10} (1977), no.~4, 441--472.

\bibitem[Bo]{borel}
A.~Borel,
Admissible representations of a semi-simple group over a local field with vectors fixed under an Iwahori subgroup, {\em  Invent.~Math.}, {\bf 35} (1976), 233--259.

\bibitem[CG]{cg}
N.~Chriss, V.~Ginzburg,
{\em Representation Theory and Complex Geometry}, Birkh\"auser (Boston), 1997.

\bibitem[EFM]{EFM}
P.~Etingof, R.~Freund, X.~Ma, 
A Lie-theoretic construction of representations of the degenerate affine and double affine Hecke algebras of type $BC_n$, preprint, {\tt arXiv:0801.1530}.

\bibitem[Ev]{evens}
S.~Evens,
The Langlands classification for graded Hecke algebras, {\em Proc.~Amer.~Math.~Soc.}, {\bf 124} (1996), 1285--1290.

\bibitem[Kn]{knapp}
A.~W.~Knapp, {\em Representation Theory of Real Semisimple Groups: an Overview Based on Examples}, Princeton University Press (Princeton), 1986. 

\bibitem[La]{la}
R.~P.~Langlands,
On the classification of representations of real algebraic groups, in 
{\em Representation Theory and Harmonic Analysis on Semisimple Lie Groups}, P.~Sally and D.~Vogan, eds., Mathematical Surveys and Monographs, vol.~31, American Mathematical Society (Providence, Rhode Island),1989, 101--170

\bibitem[Lu1]{lu:cls1}
G.~Lusztig,
Cuspidal local systems and graded Hecke algebras, {\em Publ.~Math.~IH\'{E}S}, 
{\bf 67} (1988), 145--202.

\bibitem[Lu2]{lu:graded}
G.~Lusztig, 
Affine Hecke algebras and their graded version, 
{\em J.~Amer.~Math.~Soc.},  {\bf 2} (1989), 599-635.


\bibitem[M]{ma}
X.~Ma,  On some representations of degenerate affine Hecke algebras of type $BC_{n}$,
preprint, {\tt arXiv:0810.0791}.

\bibitem[McT]{mt}
W.~M.~McGovern and P.~E.~Trapa,
Pattern avoidance and smoothness of
closures for orbits of a symmetric subgroup 
in the flag variety, preprint.

{
\bibitem[O]{oda}
H.~Oda,
Generalization of Harish-Chandra's basic theorem for Riemannian symmetric spaces of non-compact type,
{\em Adv.~Math.}, {\bf  208} (2007), no.~2, 549--596. }

\bibitem[RS]{rs}
R.~W.~Richardson and T.~A.~Springer, {\em On the Bruhat order for
  symmetric varieties}, 
Geom.~Dedicata {\bf 35} (1990), 389--436.


\bibitem[S]{suzuki}
T.~Suzuki,
Rogawski's Conjecture on the Jantzen Filtration for the Degenerate Affine Hecke Algebra of Type A, 
{\em Representation Theory}, {\bf 2} (1998),
393--409.

\bibitem[Ta1]{ta}
M.~Tadi\'{c}, 
Classification of unitary representations in irreducible representations of general linear group (non-Archimedean case), {\em Ann.~Sci.~\'{E}cole~Norm.~Sup.~(4)},  {\bf 19} (1986),  no.~3, 335--382. 

\bibitem[Ta2]{ta2}
M.~Tadi\'{c},
$\GL(n,\C)\hat{~}$ and $\GL(n,\R)\hat{~}$, {\em Automorphic forms and
  $L$-functions, Contemp. Math.,} to appear.


\bibitem[Vo1]{vogan:green}
D.~A.~Vogan, 
{\em Representations of Real Reductive Lie Groups}, 
{Progress in Mathematics}, {\bf 15} (1981),
Birkh\"auser (Boston).

\bibitem[Vo2]{v:ic3}
D.~A.~Vogan, 
Irreducible characters of semisimple Lie groups.~III.~Proof of Kazhdan-Lusztig conjecture in the integral case,
{\em Invent.~Math.}, {\bf 71} (1983), no.~2, 381--417. 

\bibitem[Vo3]{v:ic4}
D.~A.~Vogan, 
Irreducible characters of semisimple Lie groups.~IV.~Character-multiplicity duality.  
{\em Duke Math.~J.},  {\bf 49}  (1982), no.~4, 943--1073.

\bibitem[Vo4]{v:unit}
D.~A.~Vogan, 
Unitarizability of certain series of representations, {\em Ann.~of Math. (2)},  {\bf 120} (1984)), 141--187.

\bibitem[Vo5]{vogan:gln}
D.~A.~Vogan, 
The unitary dual of $\GL(n)$ over an archimedean field, {\em Invent.~Math.},
{\bf 83} (1986), 449--505. 

\bibitem[Vo6]{vogan:llc}
D.~A.~Vogan, 
The local Langlands conjecture, in 
{\em Representation theory of groups and algebras}, 
305--379, Contemp.~Math., {\bf 145}, Amer~ Math.~Soc.(Providence, RI), 1993.


\bibitem[Z1]{Ze} A.~Zelevinsky, The p-adic analogue of the
  Kazhdan-Lusztig hypothesis, {\em Funktsional. Anal. i Prilozhen}, {\bf 15} (1981),
  no.~2, 9--21.

\bibitem[Z2]{zel} A.~Zelevinsky, Two remarks on graded nilpotent classes, 
{\em Uspehi Mat.~Nauk}, {\bf 40} (1985), no.~1 (241), 199--200.
\end{thebibliography}
\end{document}